\newtheorem{Thm}{Theorem}[section]
\newtheorem{Lem}[Thm]{Lemma}
\newtheorem{Pro}[Thm]{Proposition}
\def\blfootnote{\xdef\@thefnmark{}\@footnotetext}
\theoremstyle{definition}
\newtheorem{Def}[Thm]{Definition}
\newtheorem{Eg}[Thm]{Example}
\theoremstyle{remark}
\newtheorem{Rem}[Thm]{\bf{Remark}}
\newcommand{\ConvD}{\overset{d}{\rightarrow}}
\newcommand{\ConvFDD}{\overset{f.d.d.}{\longrightarrow}}
\newcommand{\Cov}{\mathrm{Cov}}
\newcommand{\Var}{\mathrm{Var}}
\newcommand{\E}{\mathbb{E}}
\newcommand{\mathbd}{\boldsymbol}
\title{Generalized Hermite processes, discrete chaos and limit theorems }
\author{Shuyang Bai, Murad S. Taqqu}
\date{\today}
\begin{document}
\maketitle

\begin{abstract}
\blfootnote{
\begin{flushleft}
\textbf{Key words} Long memory; Discrete chaos; Wiener chaos; Limit theorem
\end{flushleft}
\textbf{2010 AMS Classification:} 60G18, 60F05\\
}
We introduce a broad class of self-similar processes $\{Z(t),t\ge 0\}$ called generalized Hermite processes. They have stationary increments, are defined on a Wiener chaos with Hurst index $H\in (1/2,1)$, and include Hermite processes as a special case. They are defined through a homogeneous kernel $g$, called  ``generalized Hermite kernel'', which replaces the product of power functions in the definition of Hermite processes. The generalized Hermite kernels $g$ can also be used to generate  long-range dependent stationary sequences forming a discrete chaos process $\{X(n)\}$. In addition, we  consider a fractionally-filtered version $Z^\beta(t)$ of $Z(t)$, which allows $H\in (0,1/2)$.  Corresponding non-central limit theorems are  established. We also give a multivariate limit theorem which mixes central and non-central limit theorems.
\end{abstract}

\section{Introduction}\label{Sec:intro}
A stochastic process $\{X(t),t\ge 0\}$ with finite variance taking values in $\mathbb{R}$ is said to be \emph{self-similar} if there is a constant called \emph{Hurst coefficient} $H>0$, such that for any scaling factor $a>0$, $X(at)\overset{f.d.d.}{=}a^H X(t)$, where  $\overset{f.d.d.}{=}$  means equality in finite-dimensional distributions. If a self-similar process $\{X(t),t\ge 0\}$  has also stationary increments, namely, if for any $h\ge 0$, $\{Y(t):=X(t+h)-X(t),t\ge 0\}$ is a stationary process, then we say that $\{X(t),t\ge 0\}$ is $H$-sssi. The natural range of $H$ is $(0,1)$, which implies $\E X(t)=0$ for all $t\ge 0$. We refer the reader to Chapter 3 of \citet{embrechts:maejima:2002:selfsimilar} for details.

The  fundamental theorem of Lamperti (\citet{lamperti:1962:semi}) states that $H$-sssi processes are the only possible limit laws of normalized partial sum  of stationary sequences, that is, if
$$
\frac{1}{A(N)}\sum_{n=1}^{[Nt]}X(n)\ConvFDD Y(t)
$$
 and $A(N)\rightarrow\infty$ as $N\rightarrow\infty$, where $\{X(n)\}$ is stationary, then $\{Y(t),t\ge 0\}$ has to be $H$-sssi for some $H>0$, and $A(N)$ has to be regularly varying with exponent $H$. The notation $\ConvFDD$ stands for convergence in finite-dimensional distributions (f.d.d.).

The best known  example of  Lamperti's fundamental theorem is when $\{X(n)\}$ is  i.i.d.\ or a \emph{short-range dependent} (SRD) sequence, then the limit $Y(t)$ is Brownian motion which is $\frac{1}{2}$-sssi. If $\{X(n)\}$ has  \emph{long-range dependence} (LRD), the limit $Y(t)$ is often $H$-sssi with $H>1/2$. The most typical $H$-sssi process is  fractional Brownian motion $B_H(t)$, but there are also  non-Gaussian processes, e.g, \emph{Hermite processes} (\citet{taqqu:1979:convergence}, \citet{dobrushin:major:1979:non}). The Hermite process of order $1$ is fractional Brownian motion, but when the order is greater than or equal to $2$, its law belongs to higher-order Wiener chaos  (see, e.g., \citet{peccati:taqqu:2011:wiener}) and is thus non-Gaussian.

The Hermite processes have attracted a lot of attention. The first-order Hermite process, namely fractional Brownian motion, has been studied intensively by numerous researchers since its popularization by \citet{mandelbrot:vanness:1968:fractional}, and we refer the reader to a recent monograph \citet{nourdin:2012:selected} and the references therein. The second-order Hermite process, namely the Rosenblatt process, is also investigated in a number of papers. Recent works include \citet{tudor:2008:analysis}, \citet{bardet:tudor:2010:wavelet}, \citet{veillette:taqqu:2012:properties}, \citet{maejima:tudor:2007:wiener,maejima:tudor:2013:distribution}.  Hermite processes  frequently appear in statistical inference problems involving LRD, e.g., \citet{levy:boistard:taqqu:reisen:2011:asymptotic}, \citet{dehling:rooch:2012:non}.

It is interesting to note that when the stationary sequence $\{X(n)\}$ is LRD, one can obtain in the limit a much richer class of processes, whereas  in the SRD case, one obtains only Brownian motion. The type of limit theorems involving $H$-sssi  processes other than Brownian motion are often called \emph{non-central limit theorems}. While Hermite processes are the main examples of $H$-sssi processes obtained as the limit of partial sum of finite-variance LRD sequence, there are very few other limit $H$-sssi processes which have been considered, with some exceptions \citet{rosenblatt:1979:some} and \citet{major:1981:limit}.

In this paper, we   introduce a broad class of $H$-sssi ($H>1/2$) processes $\{Z(t),t\ge 0\}$ with their laws in  Wiener chaos,  which includes the Hermite processes as a special case. These processes are defined as $Z(t)=I_k(h_t)$, where $I_k(\cdot)$ denotes $k$-tuple Wiener-It\^o integral, and
$$h_t(x_1,\ldots,x_k):=\int_0^t g(s-x_1,\ldots,s-x_k)\mathrm{1}_{\{s>x_1,\ldots,s>x_k\}}ds,$$ with $g$ being some suitable homogeneous function on $\mathbb{R}_+^k$  called \emph{generalized Hermite kernel}. For example,
\begin{equation}\label{eq:eg}
g(x_1,\ldots,x_k)=\max\left( \frac{x_1\ldots x_k}{ x_1^{k-\alpha}+\ldots +x_k^{k-\alpha}} ,~ x_1^{\alpha/k}\ldots x_k^{\alpha/k}\right), \quad \mathbf{x} \in \mathbb{R}_+^k, ~\alpha\in (-k/2-1/2,-k/2).
\end{equation}
We call the corresponding H-sssi process $Z(t)$ a \emph{generalized Hermite process}.
We then construct a class of \emph{discrete chaos processes}
as
$$X(n)=\sum_{(i_1,\ldots,i_k)\in\mathbb{Z}_+^k}' g(i_1,\ldots,i_k)\epsilon_{n-i_1}\ldots \epsilon_{n-i_k},$$
where $\{\epsilon_i\}$ are i.i.d.\ noise, and the prime $'$ exclusion of the diagonals $i_p=i_q$, $p\neq q$. We show that the normalized partial sum of $X(n)$  converges to the generalized Hermite process $Z(t)$ defined by the same $g$. We also obtain processes  with $H\in(0,1/2)$ by applying an additional fractional filter. The increments of these processes have negative dependence. Finally, we state a multivariate limit theorem which mixes central and non-central limits, including cases where there is an additional fractional filter.

The paper is organized as follows. In Section 2, we review the Hermite processes. In Section 3, the generalized Hermite processes are introduced. In Section 4, we consider the discrete chaos processes. In Section 5, we prove a hypercontractivity relation for infinite discrete chaos. In Section 6, we show that the discrete chaos processes converge weakly to the generalized Hermite processes, including  situations where $H<1/2$.

\section{Brief review of Hermite processes}\label{Sec:Review}

The Hermite processes are defined with the aid of a multiple stochastic integral called \emph{Wiener-It\^o integral}. We give here a brief introduction to this integral. For the proofs of our statements and additional details, we refer the reader to \citet{major:1981:multiple} and \citet{nualart:2006:malliavin}, for example. The Wiener-It\^o integral is defined for any $f\in L^2(\mathbb{R}^k)$ as
\[
I_k(f):=\int_{\mathbb{R}^k}' f(x_1,\ldots,x_k) W(dx_1)\ldots W(dx_k),
\]
where $W(\cdot)$ is  Brownian motion viewed as a random integrator, and the prime $'$ indicates that we don't integrate on  the diagonals $x_p=x_q$, $p\neq q$.
The integral $I_k(\cdot)$ can be defined first for elementary functions $f=\sum_{i=1}^n a_i \mathrm{1}_{A_i}$, where $A_i$'s are off-diagonal cubes in $\mathbb{R}^k$. This results in a  linear combination of $k$-fold product of independent centered Gaussian random variables. One then extends this in the usual way to any $f\in L^2(\mathbb{R}^k)$. The random variable  $I_k(f)$ is also said to belong to the $k$-th Wiener chaos $\mathcal{H}_k$, which is the Hilbert space generated by $I_k(f)$ when $f$ varies in $L^2(\mathbb{R}^k)$. Here we state the following important properties of the Wiener-It\^o integral $I_k(\cdot)$:
\begin{enumerate}
\item $I_k(\cdot)$ is a linear mapping from $L^2(\mathbb{R}^k)$ to $L^2(\Omega)$.
\item If $f_\sigma(x_1,\ldots,x_k):=f(x_{\sigma(1)},\ldots,x_{\sigma(k)})$, where $\sigma$ is any permutation of $(1,\ldots,k)$, then $I_k(f_\sigma)=I_k(f)$. It hence suffices to focus on symmetric integrands (symmetrize $f$ as
    $$
    \tilde{f}(x_1,\ldots,x_k):=\frac{1}{k!}\sum_{\sigma}f(x_{\sigma(1)},\ldots,x_{\sigma(k)})
    $$
    when necessary).
\item Suppose $f\in L^2(\mathbb{R}^p)$ and $g\in L^2(\mathbb{R}^q)$, and both are symmetric. Then
\[
\E I_p(f) I_q (g) =
\begin{cases}
k! \langle f,g\rangle_{L^2(\mathbb{R}^k)}=k!\int_{\mathbb{R}^k}f(\mathbf{x})g(\mathbf{x})d\mathbf{x}, & \text{ if } p=q=k;  \\
0, & \text{ if } p\neq q.
\end{cases}
\]
If $f\in L^2(\mathbb{R}^k)$ is not  symmetric, one gets
\[
\E I_p(f)^2 = \|\tilde{f}\|_{L^2(\mathbb{R}^k)}^2\le k! \|{f}\|_{L^2(\mathbb{R}^k)}^2.
\]
\end{enumerate}

An Hermite process of order $k$ is an $H$-sssi process with $1/2<H<1$, which is represented by the following Wiener-It\^o integral:
\begin{equation}\label{eq:Herm TimeDomain}
Z_H^{(k)}(t)=a_{k,d} \int'_{\mathbb{R}^k}~\int_0^t \prod_{p=1}^k (s-x_j)_+^{d-1}ds~ W(dx_1)\ldots W(dx_k),
\end{equation}
where

 and $a_{k,d}$ is some positive constant that makes $\Var(Z_H^{(k)}(1))=1$. We call (\ref{eq:Herm TimeDomain}) the \emph{time-domain representation}. It is known that Hermite processes admit  other  representations in terms of Wiener-It\^o integrals (see \citet{pipiras:taqqu:2010:regularization}), among which we note the \emph{spectral-domain representation}:
\begin{equation}\label{eq:Herm SpecDomain}
Z_H^{(k)}(t)=b_{k,d}\int''_{\mathbb{R}^k}\frac{e^{i(u_1+\ldots+u_k)t}-1}{i(u_1+\ldots+u_k)} |u_1|^{-d}\ldots |u_k|^{-d} \widehat{W}(du_1)\ldots\widehat{W}(du_k),
\end{equation}
where $\widehat{W}(\cdot)$ is a  complex-valued Brownian motion (with real and imaginary parts being independent) viewed as a random integrator (see, e.g., p.22 of \citet{embrechts:maejima:2002:selfsimilar}), the double prime $''$ indicates the exclusion of the hyper-diagonals $u_p=\pm u_q$, $p\neq q$, and $b_{k,d}$ is some  positive constant that makes $\Var(Z_H^{(k)}(1))=1$. In the sequel, we use $\widehat{I}_k(\cdot)$ to denote a $k$-tuple Wiener-It\^o integral  with respect to the complex-valued Brownian motion $\widehat{W}(\cdot)$. In fact, the kernel inside the  Wiener-It\^o integral in (\ref{eq:Herm SpecDomain}) is the Fourier  transform of the kernel in (\ref{eq:Herm TimeDomain}) up to some unimportant factors.   The connection between the time-domain and spectral-domain representation is through the following general result:
\begin{Pro}\label{Pro:Time<->Spec}(Proposition 9.3.1 of \citet{peccati:taqqu:2011:wiener})
Let $g_j(\mathbf{x})$ be a real-valued function in $L^2(\mathbb{R}^{k_j})$, $j=1,\ldots,J$. Let
$$\widehat{g}_j(\mathbf{u})=\int_{\mathbb{R}^k} g_j(\mathbf{x}) e^{i\langle \mathbf{u},\mathbf{x} \rangle} d\mathbf{x}
$$
be the Fourier transform. Then
\[
\Big(I_{k_1}(g_1),\ldots,I_{k_J}(g_2)\Big)\overset{d}{=}\left((2\pi)^{-k_1/2}\widehat{I}_{k_1}(\widehat{g}_1 w_1^{\otimes_{k_1}}),\ldots,(2\pi)^{-k_J/2}\widehat{I}_{k_J}(\widehat{g}_2 w_J^{\otimes_{k_J}})\right),
\]
for any $|w_j(u)|=1$ and $w_j(u)=\overline{w_j(-u)}$, $j=1,\ldots,J$, where $w^{\otimes k}(u_1\ldots u_k):=w(u_1)\ldots w(u_k)$.
\end{Pro}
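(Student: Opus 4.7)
The plan is to exploit the Fourier transform (with a phase choice) as a unitary isomorphism between $L^2(\mathbb{R})$ and the space of Hermitian functions in $L^2_\mathbb{C}(\mathbb{R})$, and to lift this to an isometric isomorphism between Wiener chaoses. Concretely, I would construct $\widehat{W}$ as an explicit function of $W$ via the Fourier/Plancherel pairing, so that the stated equality in distribution is in fact an almost-sure identity on a single probability space; the joint-law assertion then follows automatically, because one and the same coupling $W\leftrightarrow\widehat{W}$ serves every $g_j$, regardless of the order $k_j$.

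I would begin with single integrals, $k=1$. For real $f\in L^2(\mathbb{R})$, Plancherel gives $\|\widehat{f}\|_{L^2}^2=2\pi\|f\|_{L^2}^2$ and $\widehat{f}(-u)=\overline{\widehat{f}(u)}$ (Hermitian symmetry); multiplication by any $w$ with $|w|=1$ and $w(-u)=\overline{w(u)}$ preserves both properties. Consequently $(2\pi)^{-1/2}\widehat{I}_1(\widehat{f}w)$ is a centered real Gaussian of variance $\|f\|_{L^2}^2$, matching $I_1(f)$, and the two can be coupled by defining $\widehat{W}$ directly from $W$ through this isometry. I would then lift to higher chaos via elementary integrands $f=\mathbf{1}_{A_1}\otimes\cdots\otimes\mathbf{1}_{A_k}$ with $A_1,\ldots,A_k$ pairwise disjoint. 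On the time side, $I_k(f)=I_1(\mathbf{1}_{A_1})\cdots I_1(\mathbf{1}_{A_k})$ is a product of independent Gaussians; on the spectral side, $(2\pi)^{-1/2}\widehat{I}_1(\widehat{\mathbf{1}_{A_i}}w)$ are likewise independent Gaussians (they are jointly Gaussian and pairwise uncorrelated since $\langle\widehat{\mathbf{1}_{A_i}}w,\widehat{\mathbf{1}_{A_j}}w\rangle=2\pi\langle\mathbf{1}_{A_i},\mathbf{1}_{A_j}\rangle=0$), whose product factors out of $\widehat{I}_k$ applied to the corresponding tensor. Matching factor by factor via the $k=1$ case and symmetrizing gives the identity on the dense span of such tensors. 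A continuity argument using the Wiener-It\^o isometry (item 3 of Section~\ref{Sec:Review}) then extends the identity to arbitrary $g_j\in L^2(\mathbb{R}^{k_j})$, and since the coupling does not depend on $j$ or $k_j$, the joint law is the same.

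I expect the main technical obstacle to be the careful setup of the spectral Wiener-It\^o integral $\widehat{I}_k$ and its compatibility with the Hermitian symmetry $\widehat{W}(-du)=\overline{\widehat{W}(du)}$: the exclusion of the hyper-diagonals $u_p=\pm u_q$ is the spectral-domain analogue of the off-diagonal exclusion in the time domain and is forced by this symmetry. Once the spectral construction is in place (essentially along the lines of \citet{major:1981:multiple}), the phase freedom $w_j$ is cost-free, because $\|\widehat{g}_j w_j^{\otimes k_j}\|_{L^2}=\|\widehat{g}_j\|_{L^2}$ and Hermitian symmetry is preserved under multiplication by any unimodular $w_j$ with $w_j(-u)=\overline{w_j(u)}$; thus the whole family of admissible phases produces the same law.
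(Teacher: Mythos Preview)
The paper does not actually prove this proposition: it is quoted verbatim from \citet{peccati:taqqu:2011:wiener}, Proposition 9.3.1, with no argument supplied. The only thing the paper adds is the sentence following the statement, which justifies the phase freedom in $w_j$ by appealing to the change-of-variable formula for Wiener--It\^o integrals (\citet{dobrushin:1979:gaussian}, Proposition 4.2). So there is no ``paper's own proof'' to compare against; your sketch is effectively a proof of a result the authors chose to import.

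Your approach is the standard one and is essentially what one finds in \citet{peccati:taqqu:2011:wiener} or \citet{major:1981:multiple}: build the coupling $W\leftrightarrow\widehat W$ at the level of single integrals using Plancherel, then transport it to higher chaos. One point to tighten: the factorization step ``whose product factors out of $\widehat I_k$ applied to the corresponding tensor'' is not immediate from the \emph{definition} of $\widehat I_k$ on elementary (spectral) step functions, because the $\widehat{\mathbf 1_{A_i}}w$ are not step functions and have full support. What you actually need there is the multiplication formula $\widehat I_1(h_1)\cdots\widehat I_1(h_k)=\widehat I_k(h_1\otimes\cdots\otimes h_k)$ for pairwise orthogonal Hermitian $h_i$, which follows from the general product formula with vanishing contractions, or equivalently from the Hermite-polynomial characterization of the $k$-th chaos. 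Once that is in hand, your coupling at level $k=1$ automatically lifts to all chaoses simultaneously, and the joint statement for varying $k_j$ follows as you say. Your treatment of the $w_j$ factors via norm and Hermitian-symmetry preservation is correct and is equivalent to the paper's one-line appeal to Dobrushin's change-of-variable formula.
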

The factors $w_j^{\otimes {k_j}}$, $j=1,\ldots,J$ do not change the distributions due to the change-of-variable formula of  Wiener-It\^o integrals (see, e.g., Proposition 4.2 of \citet{dobrushin:1979:gaussian}).

The Hermite process of order $k=1$ is fractional Brownian motion $B_H(t)$, and that of order $k=2$ is called \emph{Rosenblatt process} whose marginal distribution was  discovered by \citet{rosenblatt:1961:independence}. We note that all $H$-sssi processes with unit variance at $t=1$ have covariance
$$
R(s,t)=\frac{1}{2}(s^{2H}+t^{2H}-|s-t|^{2H}),
$$
  as is the case for Hermite process of arbitrary order.

Hermite processes arise as limits  of partial sum of nonlinear LRD sequences. In the following two theorems,   $A(N)$ is a normalization factor  guaranteeing unit asymptotic variance for the partial sum process at $t=1$. We use  $\Rightarrow$ to denote weak convergence in the Skorohod space $D[0,1]$ with the uniform metric.
\begin{Thm}\label{Thm:GaussSub}(\citet{dobrushin:major:1979:non,taqqu:1979:convergence}.)
Suppose that $\{X(n)\}$ is a Gaussian stationary sequence with autocovariance
$$
\gamma(n)\sim cn^{2d-1}
$$
 as $n\rightarrow\infty$ for some constant $c>0$ and
$$1/2(1-1/k)<d<1/2.$$
Let $H_k(x):=(-1)^ke^{x^2/2}\frac{d^k}{dx^k}e^{-x^2/2}$ be the $k$-th Hermite polynomial, $k\ge 1$. Then
\[
\frac{1}{A(N)}\sum_{n=1}^{[Nt]} H_k(X(n))\Rightarrow Z_{d}^{(k)}(t).
\]
\end{Thm}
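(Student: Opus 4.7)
The plan is to prove this via a spectral-domain argument. Normalize so that $\Var(X(n))=1$. Since $\{X(n)\}$ is Gaussian and stationary, it admits a spectral representation $X(n)=\widehat{I}_1\bigl(e^{in\lambda}\sqrt{f(\lambda)}\bigr)$, where $f$ is the spectral density associated with $\gamma$. The hypothesis $\gamma(n)\sim cn^{2d-1}$ with $d<1/2$ forces, via a Tauberian/Karamata argument, $f(\lambda)\sim c_1|\lambda|^{-2d}$ as $\lambda\to 0$. Invoking the identity $H_k\bigl(\widehat{I}_1(g)\bigr)=\widehat{I}_k(g^{\otimes k})$ (valid whenever $\|g\|_{L^2}=1$) together with linearity of $\widehat{I}_k$ yields
$$\sum_{n=1}^{[Nt]}H_k\bigl(X(n)\bigr)=\widehat{I}_k(K_{N,t}), \qquad K_{N,t}(\lambda_1,\ldots,\lambda_k)=e^{i\sum_j\lambda_j}\,\frac{e^{i[Nt]\sum_j\lambda_j}-1}{e^{i\sum_j\lambda_j}-1}\prod_{j=1}^k\sqrt{f(\lambda_j)}.$$

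The next step is to perform the change of variables $\lambda_j=u_j/N$ and choose the normalization $A(N)\sim C_0 N^{H}$ with $H=1-k(1/2-d)\in(1/2,1)$; the lower bound $d>1/2(1-1/k)$ is precisely what places $H$ above $1/2$. Under this rescaling, each factor $\sqrt{Nf(u_j/N)}$ converges to $\sqrt{c_1}\,|u_j|^{-d}$ and the Dirichlet-type factor converges to $(e^{it\sum_j u_j}-1)/\bigl(i\sum_j u_j\bigr)$, so that pointwise a.e.\ the rescaled kernel converges to a constant multiple of the spectral kernel appearing in (\ref{eq:Herm SpecDomain}). Upgrading pointwise to $L^2(\mathbb{R}^k)$ convergence --- by dominating the rescaled integrand uniformly in $N$, splitting into $|u_j|\le \delta N$ (using the Karamata asymptotic of $f$ near $0$) and $|u_j|>\delta N$ (using $\int f<\infty$) --- yields $L^2(\Omega)$-convergence of $A(N)^{-1}\sum_{n\le[Nt]}H_k(X(n))$ to $Z_d^{(k)}(t)$ by the isometry of $\widehat{I}_k$. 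Linearity of $\widehat{I}_k$ then delivers joint convergence at any finite set of times, establishing $\ConvFDD$.

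To strengthen f.d.d.\ convergence to weak convergence in $D[0,1]$, I would establish tightness via a moment bound. The isometry gives
$$\E\Bigl|A(N)^{-1}\sum_{n=[Ns]+1}^{[Nt]}H_k\bigl(X(n)\bigr)\Bigr|^2\le C|t-s|^{2H}$$
uniformly in $N$. Since $2H>1$ and the limit process is continuous, Kolmogorov's criterion (upgraded from $C[0,1]$ to $D[0,1]$ for jump processes whose jumps vanish in the scaling limit) yields tightness, and hence $\Rightarrow Z_d^{(k)}$. The main obstacle in this program is the $L^2$-upgrade of the spectral-kernel convergence: the a.e.\ asymptotic $Nf(u/N)\to c_1|u|^{-2d}$ describes only the low-frequency regime, so one must simultaneously tame the singularity at each $u_j=0$ (from the LRD exponent $d$) and the near-diagonal singularity at $u_1+\ldots+u_k=0$ (from the Dirichlet-type factor), and verify that the combined singularities remain square-integrable precisely when $d>1/2(1-1/k)$, which is exactly the lower endpoint in the hypothesis.
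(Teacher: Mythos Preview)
The paper does not supply its own proof of this theorem: it is stated in Section~\ref{Sec:Review} as a background result, with the proof deferred entirely to the cited works \citet{dobrushin:major:1979:non} and \citet{taqqu:1979:convergence}. There is therefore no ``paper's proof'' to compare against.

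Your proposal follows the spectral-domain route of \citet{dobrushin:major:1979:non}: represent $X(n)$ spectrally, apply the Hermite-to-chaos identity, rescale, and pass to the $L^2$ limit of kernels to reach the spectral representation~(\ref{eq:Herm SpecDomain}). The overall architecture is sound and is indeed one of the two classical proofs. A genuine gap, however, lies in the step you gloss as ``via a Tauberian/Karamata argument'': the hypothesis is on the covariance, $\gamma(n)\sim cn^{2d-1}$, and you need the spectral density to satisfy $f(\lambda)\sim c_1|\lambda|^{-2d}$ near $0$. This is the Tauberian direction, which does \emph{not} follow from the covariance asymptotic alone without an extra regularity assumption (quasi-monotonicity, bounded variation of a slowly varying factor, or similar). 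Dobrushin and Major in fact \emph{assume} a condition on $f$ directly; \citet{taqqu:1979:convergence} avoids the issue by working in the time domain and never invoking $f$. As written, your spectral argument would need either an additional hypothesis on $f$ or a careful justification of the Tauberian step, which the bare statement of the theorem does not supply.

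Your tightness argument is fine: since $H>1/2$ the second-moment bound $\E|Y_N(t)-Y_N(s)|^2\le C\left(\frac{[Nt]-[Ns]}{N}\right)^{2H}$ combined with hypercontractivity (or here simply Gaussianity, so all moments are controlled by the second) gives a $p$-th moment bound with $pH>1$, and the standard chaining/Kolmogorov criterion in $D[0,1]$ applies. This is the same mechanism the paper invokes repeatedly (e.g.\ the remark that ``tightness in $D[0,1]$ is standard since $H>1/2$'' in the proof of Theorem~\ref{Thm:NCLT}).
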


\begin{Thm}\label{Thm:Polyform}(\citet{surgailis:1982:zones}, see also \citet{giraitis:koul:surgailis:2009:large} Chapter 4.8.)
Let $\{\epsilon_i\}$ be an i.i.d.\ sequence with mean $0$ variance $1$,
$$
a_n\sim c n^{d-1}
$$
 as $n\rightarrow\infty$ for some constant $c>0$ and
$$1/2(1-1/k)<d<1/2.$$
Let
$$
X(n)=\sum_{0<i_1,\ldots,i_k<\infty}^{\prime} a_{i_1}\ldots a_{i_k} \epsilon_{n-i_1}\ldots\epsilon_{n-i_k},
 $$
 where the prime $'$ indicates that one doesn't sum on the diagonals $i_p=i_q$ $p\neq q$. Then
\[
\frac{1}{A(N)}\sum_{n=1}^{[Nt]} X(n)\Rightarrow Z_{d}^{(k)}(t).
\]
\end{Thm}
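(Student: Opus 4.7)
The plan is to express the partial sum as a discrete multiple chaos of order $k$ in $\{\epsilon_j\}$, show that the rescaled kernel converges in $L^2(\mathbb{R}^k)$ to the time-domain Hermite kernel appearing in (\ref{eq:Herm TimeDomain}), and then transfer the convergence from the discrete chaos to the Wiener chaos via an invariance principle. Throughout, $H:=k(d-1/2)+1\in(1/2,1)$ and $A(N)$ turns out to be regularly varying with index $H$.

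First I would interchange the order of summation in $S_N(t):=\sum_{n=1}^{[Nt]}X(n)$, reindexing via $j_p=n-i_p$, to obtain
\[
S_N(t)=\sum_{(j_1,\ldots,j_k)\in\mathbb{Z}^k}{}'\,b_N^t(j_1,\ldots,j_k)\,\epsilon_{j_1}\cdots\epsilon_{j_k},\quad b_N^t(\mathbf{j})=\sum_{n=1}^{[Nt]}\prod_{p=1}^k a_{n-j_p}\,\mathrm{1}_{\{n>j_p\}}.
\]
Rescale by setting $f_N^t(\mathbf{x}):=N^{k/2}A(N)^{-1}b_N^t([Nx_1],\ldots,[Nx_k])$ on $\mathbb{R}^k$. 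Using $a_n\sim cn^{d-1}$ and a Riemann-sum approximation, $f_N^t(\mathbf{x})$ converges pointwise off the diagonals to
\[
h^t(\mathbf{x}):=c^k\int_0^t\prod_{p=1}^k (s-x_p)_+^{d-1}\,ds,
\]
and a dominated-convergence argument lifts this to convergence in $L^2(\mathbb{R}^k)$. The conditions $d<1/2$ and $d>(1-1/k)/2$ are respectively what guarantee integrability of $h^t$ at infinity and near the diagonals; the latter is the source of the LRD.

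The core step is to pass from the non-Gaussian discrete chaos to the Wiener chaos. One embeds $\{\epsilon_j\}$ into a step-function approximation $\widetilde W_N$ of Brownian motion via $\widetilde W_N(j/N)-\widetilde W_N((j-1)/N)=\epsilon_j/\sqrt N$, so $S_N(t)/A(N)$ coincides with an off-diagonal multiple integral of $f_N^t$ against $\widetilde W_N$. Truncating $f_N^t$ to a compact set $K\subset\mathbb{R}^k$ bounded away from the diagonals, one applies an invariance principle for off-diagonal polynomial forms in i.i.d.\ variables (replacing $\epsilon_j$'s by Gaussians) to obtain joint convergence to the corresponding truncated Wiener-It\^o integral $I_k(h^t\,\mathrm{1}_K)$. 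The $L^2$-convergence of $f_N^t$ to $h^t$, combined with a uniform second-moment control (hypercontractivity for discrete chaos), lets one send $K\uparrow\mathbb{R}^k$ with errors uniform in $N$, giving the f.d.d.\ limit $I_k(h^t)=Z_d^{(k)}(t)$ after matching the constants $a_{k,d}$.

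The main obstacle is the uniform $L^2$ control of $f_N^t-h^t$ near the diagonals, where the integrand has mild singularities making the Riemann-sum estimate delicate; the LRD threshold $d>(1-1/k)/2$ is sharp for this to hold. Once f.d.d.\ convergence is established, tightness in $D[0,1]$ follows from controlling the fourth moment of increments $S_N(t)-S_N(s)$: one writes it as a discrete chaos of $b_N^t-b_N^s$, bounds the fourth moment by $\|f_N^t-f_N^s\|_{L^2(\mathbb{R}^k)}^4$ times a constant via hypercontractivity of discrete chaos, and concludes by Kolmogorov-Chentsov using the regular variation of $A(N)$ with index $H>1/2$.
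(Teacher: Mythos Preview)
The paper does not give its own proof of this theorem; it is quoted as a known result from \citet{surgailis:1982:zones} and \citet{giraitis:koul:surgailis:2009:large}. However, the paper does prove a strict generalization, Theorem~\ref{Thm:NCLT}, and the route taken there is exactly the one you outline for finite-dimensional distributions: write the normalized partial sum as a discrete chaos $Q_k(h_{t,N})$, rescale to $\tilde h_{t,N}(\mathbf{x})=N^{k/2}h_{t,N}([N\mathbf{x}]+\mathbf{1})$, prove $\|\tilde h_{t,N}-h_t\|_{L^2(\mathbb{R}^k)}\to 0$ by dominated convergence, and invoke Proposition~\ref{Pro:Poly->Wiener} (the discrete-to-Wiener invariance principle). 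Your truncation-to-$K$ step is just the inner mechanism of that proposition, so on f.d.d.\ convergence you are aligned with both the cited sources and the paper's generalization.

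Two points deserve correction. First, your tightness argument via hypercontractivity and fourth moments implicitly requires $\E|\epsilon_i|^{4+\delta}<\infty$ (see Proposition~\ref{Pro:Hypercontract}), which is \emph{not} assumed in the theorem. The argument the paper uses for the generalization, and the one in \citet{giraitis:koul:surgailis:2009:large} Proposition~4.4.2, is simpler and uses only second moments: since $H>1/2$, the bound $\E|Y_N(t)-Y_N(s)|^2\le C|F_N(t)-F_N(s)|^{2H}$ with $2H>1$ already yields tightness via the standard moment criterion. You should drop hypercontractivity here and use this instead.

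Second, a minor inaccuracy: the integrability obstacle for $h^t$ is not ``near the diagonals $x_p=x_q$'' (which have Lebesgue measure zero and play no role in the limiting $L^2$ integral) but near the set $\{s=x_p\}$, where each factor $(s-x_p)_+^{d-1}$ blows up. The condition $d>(1-1/k)/2$ is equivalent to $H>1/2$, which is precisely what makes $h^t\in L^2(\mathbb{R}^k)$ via Theorem~\ref{Thm:is H-sssi}; the condition $d<1/2$ controls the tail $|x_p|\to\infty$.
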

\begin{Rem}
The Hermite polynomial in Theorem \ref{Thm:GaussSub} can be replaced by a general function $G(\cdot)$ such that $\E G(X_n)=0$, $\E G(X_n)^2<\infty$,   due to the orthogonal expansion of $G(x)$ with respect to Hermite polynomials, and the fact that only the leading term in the expansion contributes to the limit law. Similarly, the off-diagonal multilinear polynomial-form process $X(n)$ in Theorem \ref{Thm:Polyform} can be replaced by a suitable  function  of the linear process $Y(n):=\sum_{i\ge 1} a_i\epsilon_{n-i}$. In both of the above theorems $\ConvFDD$ can be strengthened to weak convergence $\Rightarrow$ (Proposition 4.4.2 of \citet{giraitis:koul:surgailis:2009:large}).
\end{Rem}

\begin{Rem}
The range of the parameter $d$ in both of the theorems guarantees that the summand is LRD in the sense that the autocovariance decays as a power funciton with an exponent in the range $(-1,0)$. We note also that the  constant $c>0$ appearing in both  theorems can be replaced by a slowly varying function.
\end{Rem}

\section{Generalized Hermite Processes}\label{Sec:GenHermProc}
We introduce  first some notation, which will be used throughout.  $\mathbb{R}_+=(0,\infty)$, $\mathbb{Z}_+=\{1,2,\ldots\}$. $\mathbf{x}=(x_1,\ldots,x_k)\in \mathbb{R}^k$,  $\mathbf{i}=(i_1,\ldots,i_k)\in \mathbb{Z}^k$, $\mathbf{0}=(0,\ldots,0)$, $\mathbf{1}=(1,\ldots,1)$. For any real number $x$, $[x]=\sup \{n\in \mathbb{Z},n\le x\}$, and $[\mathbf{x}]=([x_1],\ldots,[x_k])$. We write $\mathbf{x}> \mathbf{y}$ (or $\ge$) if $x_j> y_j$ (or $\ge$), $j=1,\ldots,k$. $\langle \mathbf{x},\mathbf{y}\rangle=\sum_{j=1}^k x_jy_j$, and $\|\mathbf{x}\|=\sqrt{\langle \mathbf{x},\mathbf{x}\rangle}$, while $\|\cdot\|$ with a subscript is also used to denote the norm of some other  space (specified in the subscript).  Given a set $A\subset \mathbb{R}$, $A^k$ is the $k$-fold Cartesian product. $\mathrm{1}_A(\cdot)$ is the indicator function of a set $A$.
$L^p(\mathbb{R}^k,\mu)$ denotes the $L^p$-space on $\mathbb{R}^k$ with measure $\mu$, and $\mu$ is omitted if it is Lebesgue measure.
\subsection{General kernels}\label{Subsec:General}
The following proposition provides a general way to construct in the time-domain an $H$-sssi process living in Wiener chaos:
\begin{Pro}\label{Pro:Construct H-sssi} Fix an $H\in(0,1)$.
Suppose that $\{h_t(\cdot), t>0\}$ is a family of  functions defined on $\mathbb{R}^k$ satisfying
\begin{enumerate}
\item $h_t\in L^2(\mathbb{R}^k)$\label{SSSI:L2};
\item $\forall\lambda>0$, $\exists \beta \neq 0$,  such that $h_{\lambda t}(\mathbf{x})=\lambda^{H+k\beta /2}h_t(\lambda^\beta \mathbf{x})$ for a.e.\ $\mathbf{x}\in \mathbb{R}^k$ and all $t>0$; \label{SSSI:homo}
\item $\forall s>0$, $\exists$ $\mathbf{a}\in \mathbb{R}^k$, such that $h_{t+s}(\mathbf{x})-h_{t}(\mathbf{x})=h_s(\mathbf{x}+t\mathbf{a})$ for a.e.\ $\mathbf{x}\in \mathbb{R}^k$ and all $t>0$.\label{SSSI:stationary}
\end{enumerate}
Then $Z(t):=I_k(h_t)$  is an $H$-sssi process.
\end{Pro}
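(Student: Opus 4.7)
The plan is to verify, in order, the three defining properties of an $H$-sssi process. Condition 1 ensures $Z(t) := I_k(h_t)$ is a well-defined element of the $k$-th Wiener chaos, so the content lies in establishing (a) self-similarity and (b) stationarity of increments, each obtained by combining one of the homogeneity/shift conditions with a standard invariance of the Wiener-It\^o integral. I would also set $Z(0):=0$ at the outset, consistent with $Z(\lambda\cdot 0)\EqD \lambda^H Z(0)$.

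For self-similarity, the input is the scaling rule
\[
I_k(f(\mu\,\cdot)) \EqD \mu^{-k/2} I_k(f),
\]
which holds \emph{jointly} over $f\in L^2(\mathbb{R}^k)$ because it is realized by a single rescaling of the underlying Brownian motion that is the same for every integrand. Taking $\mu = \lambda^\beta$ and applying condition 2 to each coordinate gives, for any $t_1,\ldots,t_n>0$,
\[
(Z(\lambda t_1),\ldots,Z(\lambda t_n)) = \lambda^{H+k\beta/2}\bigl(I_k(h_{t_i}(\lambda^\beta\,\cdot))\bigr)_{i=1}^n \EqD \lambda^{H+k\beta/2-k\beta/2}(Z(t_1),\ldots,Z(t_n)) = \lambda^H(Z(t_1),\ldots,Z(t_n)),
\]
which is $H$-self-similarity.

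For stationarity of increments, the relevant invariance of $I_k$ is translation: $I_k(f(\cdot+\mathbf{b})) \EqD I_k(f)$ jointly over $f$ for any $\mathbf{b}\in\mathbb{R}^k$, since it corresponds to translating the underlying white noise. Using condition 3 with $\mathbf{b}=h\mathbf{a}$,
\[
Z(t_i+h) - Z(h) = I_k(h_{t_i+h} - h_h) = I_k(h_{t_i}(\cdot + h\mathbf{a})),
\]
and taking these jointly over $i=1,\ldots,n$ with translation invariance yields
\[
(Z(t_i+h)-Z(h))_{i=1}^n \EqD (Z(t_i))_{i=1}^n,
\]
which is stationarity of increments.

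No single step is a substantial obstacle; the proof is essentially bookkeeping once the scaling and translation invariances of $I_k$ are invoked. The only point requiring care is the \emph{joint} (not merely marginal) equality in distribution under these two maps, which I would handle by noting that each invariance is realized by a single coupling of Brownian motions independent of the integrand, so the joint equality over the indices $t_1,\ldots,t_n$ is automatic.
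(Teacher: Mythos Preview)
Your proof is correct and follows essentially the same approach as the paper: the paper does not give a detailed argument but simply observes that Condition~1 ensures well-definedness, Condition~2 yields self-similarity via the scaling of the Brownian integrators, and Condition~3 yields stationary increments, referring to the change-of-variable formula for Wiener--It\^o integrals (Proposition~4.2 of \citet{dobrushin:1979:gaussian}). Your write-up is exactly an explicit execution of that sketch, with the added (and correct) emphasis that scaling and translation invariance of $I_k$ hold \emph{jointly} because they are realized by a single measure-preserving transformation of the underlying white noise.

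One small quibble: as literally stated, Condition~3 allows $\mathbf{a}$ to depend on $s$, whereas in your stationary-increments step you invoke it with varying $s=t_i$ and a common translation $\mathbf{b}=h\mathbf{a}$. In all the paper's applications $\mathbf{a}$ is in fact fixed (e.g., $\mathbf{a}=-\mathbf{1}$ in Theorem~\ref{Thm:is H-sssi}), so this is harmless; if you wanted to be fully faithful to the quantifier order, you could instead fix $s=h$ in Condition~3 (so $\mathbf{a}$ depends only on $h$), write $Z(t_i+h)-Z(t_i)=I_k\bigl(h_h(\cdot+t_i\mathbf{a})\bigr)$, and deduce stationarity of $\{Z(t+h)-Z(t)\}_t$ from a single translation by $\tau\mathbf{a}$.
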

Condition \ref{SSSI:L2} guarantees that the Wiener-It\^o integral is well defined.
Condition \ref{SSSI:homo} yields self-similarity, where the term $k\beta /2$ in the exponent compensates for the scaling of the $k$-tuple Brownian motion integrators. Condition \ref{SSSI:stationary} guarantees  stationary increments. Self-similarity and stationary increments can  be rigorously checked by the change-of-variable formula of  Wiener-It\^o integrals (Proposition 4.2 of \citet{dobrushin:1979:gaussian}).

The Hermite process, for instance, which is defined in (\ref{eq:Herm TimeDomain}) can be obtained following the scheme of  Proposition \ref{Pro:Construct H-sssi} by letting
$$
h_t(\mathbf{x})=\int_0^t g(s\mathbf{1}-\mathbf{x})\mathrm{1}_{\{s\mathbf{1}>\mathbf{x}\}}(s)ds,
$$
 and
\begin{align}\label{eq:original g}
g(\mathbf{x})=\prod_{j=1}^k x_j^{d-1}, ~x_j>0.
\end{align}
It is easy to check that the conditions on $h_t$ in Proposition \ref{Pro:Construct H-sssi} are all satisfied with $\beta=-1$ in condition \ref{SSSI:homo} and $H=kd-k/2+1$.
One can also check that the integrand  in the spectral-domain representation in (\ref{eq:Herm SpecDomain}) also satisfies the first two conditions in Proposition \ref{Pro:Construct H-sssi}, but with $\beta=1$ in Condition \ref{SSSI:homo} instead. The third condition, however, must be replaced by  $\widehat{h}_{t+s}(\mathbf{u})-\widehat{h}_{t}(\mathbf{u})=e^{-it\langle\mathbf{a},\mathbf{u} \rangle} \widehat{h}_s(\mathbf{u})$ due to the Fourier-transform relation.

Our first  goal is to extend the kernel $g$ in (\ref{eq:original g}) to some  general class of functions. To do so, we define the following class of functions on $\mathbb{R}_+^k$, which first appeared in \citet{mori:toshio:1986:law} to study the law of iterated logarithm:
\begin{Def}\label{Def:GHK}
We say that a nonzero measurable function  $g(\mathbf{x})$ defined on $\mathbb{R}_+^k$ is a \emph{generalized Hermite kernel}, if it satisfies
\begin{enumerate}[A.]
\item $g(\lambda \mathbf{x})=\lambda^\alpha g(\mathbf{x})$, $\forall \lambda>0$, where $\alpha\in(-\frac{k+1}{2},-\frac{k}{2})$;\label{ass:homo}
\item $\int_{\mathbb{R}_+^k}|g(\mathbf{x})g(\mathbf{1}+\mathbf{x})| d\mathbf{x}  <\infty$. \label{ass:int 2}
\end{enumerate}
\end{Def}
One can check that the Hermite kernel $g$ in (\ref{eq:original g}) satisfies the above assumptions.
\begin{Rem}
The range of $\alpha$ in Condition \ref{ass:homo} is non-overlapping for different $k$, and extends from $-1/2$ to $-\infty$ with all the multiples of $-1/2$ excluded.
\end{Rem}

\begin{Rem}
Suppose $g_1$ and $g_2$ are generalized Hermite kernels having order $k_1$, $k_2$ and  homogeneity exponent $\alpha_1$, $\alpha_2$ respectively. If in addition, $\alpha_1+\alpha_2>-(k_1+k_2+1)/2$, then $g_1\otimes g_2(\mathbf{x}_1,\mathbf{x}_1):=g_1(\mathbf{x}_1)g_2(\mathbf{x}_2)$ is a generalized Hermite kernel having order $k_1+k_2$ and  homogeneity exponent $\alpha_1+\alpha_2$.
\end{Rem}
\begin{Thm}\label{Thm:is H-sssi}
Let $g(\mathbf{x})$ be a generalized Hermite kernel defined in Definition \ref{Def:GHK}. Then
$$h_t(\mathbf{x})=\int_0^t g(s\mathbf{1}-\mathbf{x})\mathrm{1}_{\{s\mathbf{1}>\mathbf{x}\}} ds$$
is well-defined in $L^2(\mathbb{R}^k)$, $\forall t>0$, and the process defined by $Z_t:=I_k(h_t)$ is an $H$-sssi process with
$$H=\alpha+k/2+1\in (1/2,1).$$
\end{Thm}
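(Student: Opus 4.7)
The plan is to invoke Proposition 3.1 by verifying its three hypotheses for $h_t$. Step one is the $L^2$ bound. Using Fubini one formally writes
$$\|h_t\|_{L^2(\mathbb{R}^k)}^2=\int_0^t\int_0^t\left(\int_{\mathbb{R}^k}g(s_1\mathbf{1}-\mathbf{x})g(s_2\mathbf{1}-\mathbf{x})\mathbf{1}_{\{s_1\mathbf{1}>\mathbf{x},\,s_2\mathbf{1}>\mathbf{x}\}}d\mathbf{x}\right)ds_1ds_2.$$
Assume $s_1\le s_2$ and let $r=s_2-s_1$. The substitution $\mathbf{y}=s_1\mathbf{1}-\mathbf{x}$ collapses the two indicators to $\mathbf{y}\in\mathbb{R}_+^k$ and turns the inner integral into $\int_{\mathbb{R}_+^k}g(\mathbf{y})g(r\mathbf{1}+\mathbf{y})d\mathbf{y}$. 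Rescaling $\mathbf{y}=r\mathbf{u}$ and using the homogeneity of degree $\alpha$ from Condition A gives $r^{2\alpha+k}\int_{\mathbb{R}_+^k}g(\mathbf{u})g(\mathbf{1}+\mathbf{u})d\mathbf{u}$, where the constant is finite by Condition B. Since Condition A forces $2\alpha+k\in(-1,0)$, the outer integral $\int_0^t\int_0^t|s_1-s_2|^{2\alpha+k}ds_1ds_2$ converges and equals a constant multiple of $t^{2\alpha+k+2}=t^{2H}$. This simultaneously justifies Fubini (the same computation with $|g|$ is the relevant upper bound) and shows $h_t\in L^2$.

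For self-similarity (condition 2 of Proposition 3.1), the change of variable $s=\lambda u$ in $h_{\lambda t}(\mathbf{x})=\int_0^{\lambda t}g(s\mathbf{1}-\mathbf{x})\mathbf{1}_{\{s\mathbf{1}>\mathbf{x}\}}ds$ followed by pulling a factor $\lambda^\alpha$ out of $g$ via Condition A yields
$$h_{\lambda t}(\mathbf{x})=\lambda^{\alpha+1}\int_0^t g(u\mathbf{1}-\lambda^{-1}\mathbf{x})\mathbf{1}_{\{u\mathbf{1}>\lambda^{-1}\mathbf{x}\}}du=\lambda^{\alpha+1}h_t(\lambda^{-1}\mathbf{x}),$$
which matches the required form with $\beta=-1$ and $H+k\beta/2=\alpha+1$, hence $H=\alpha+k/2+1$. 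For stationary increments (condition 3), a direct substitution $u=t+v$ gives
$$h_{t+s}(\mathbf{x})-h_t(\mathbf{x})=\int_t^{t+s}g(u\mathbf{1}-\mathbf{x})\mathbf{1}_{\{u\mathbf{1}>\mathbf{x}\}}du=\int_0^s g(v\mathbf{1}-(\mathbf{x}-t\mathbf{1}))\mathbf{1}_{\{v\mathbf{1}>\mathbf{x}-t\mathbf{1}\}}dv=h_s(\mathbf{x}-t\mathbf{1}),$$
so the condition holds with $\mathbf{a}=-\mathbf{1}$. Finally the range $\alpha\in(-(k+1)/2,-k/2)$ translates directly into $H\in(1/2,1)$.

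The only real obstacle is the $L^2$ estimate, and its crux is precisely Condition B combined with the constraint $2\alpha+k>-1$ coming from Condition A: the former kills the spatial singularity at the ``diagonal'' $\mathbf{y}\downarrow 0$ and the latter makes the resulting temporal singularity $|s_1-s_2|^{2\alpha+k}$ locally integrable. Once that estimate is in hand, the remaining two conditions reduce to one-line substitutions. Proposition 3.1 then delivers that $Z_t=I_k(h_t)$ is $H$-sssi with $H=\alpha+k/2+1$.
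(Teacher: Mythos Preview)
Your proposal is correct and follows essentially the same route as the paper: the $L^2$ bound is obtained by expanding the square, applying Fubini (justified via the same computation with $|g|$), shifting $\mathbf{y}=s_1\mathbf{1}-\mathbf{x}$, and rescaling by $r=s_2-s_1$ to reduce the spatial integral to the constant $\int_{\mathbb{R}_+^k}|g(\mathbf{y})g(\mathbf{1}+\mathbf{y})|d\mathbf{y}$; Conditions~2 and~3 of Proposition~\ref{Pro:Construct H-sssi} are then verified by the same one-line substitutions, with $\beta=-1$ and $\mathbf{a}=-\mathbf{1}$. The paper's presentation differs only cosmetically (it writes the outer time integral as $2\int_0^t ds\int_0^{t-s}u^{2\alpha+k}\,du$ rather than $\int_0^t\int_0^t|s_1-s_2|^{2\alpha+k}\,ds_1\,ds_2$ and records the explicit value $\|h_t\|^2=t^{2H}C_g/(H(2H-1))$), but the argument is the same.
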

\begin{proof}
To check that $h_t\in L^2(\mathbb{R}^k)$, we write
\begin{align*}
\int_{\mathbb{R}^k} h_t(\mathbf{x})^2 d\mathbf{x}
&= \int_{\mathbb{R}^k} d\mathbf{x} \int_0^t\int_0^t ds_1ds_2~ g(s_1\mathbf{1}-\mathbf{x}) g(s_2\mathbf{1}-\mathbf{x})\mathrm{1}_{\{s_1\mathbf{1}>\mathbf{x}\}} \mathrm{1}_{\{s_2\mathbf{1}>\mathbf{x}\}}.
\end{align*}
We want to change the integration order by integrating on $\mathbf{x}$ first. By Fubini, we need to check that the absolute value of the integrand is integrable, that is,
\begin{align*}
&2\int_0^tds_1 \int_{s_1}^t ds_2 \int_{\mathbb{R}^k} d\mathbf{x}~ |g(s_1\mathbf{1}-\mathbf{x})g(s_2\mathbf{1}-\mathbf{x})|\mathrm{1}_{\{s_1\mathbf{1}-\mathbf{x}>\mathbf{0} \}} \quad (\text{ by symmetry of } s_1<s_2 \text{ and } s_1>s_2)
 \\
&=2\int_0^t ds \int_0^{t-s} du \int_{\mathbb{R}_+^k}d\mathbf{w}~
 |g(\mathbf{w})g(u\mathbf{1}+\mathbf{w})|  ~~\qquad\qquad\qquad (s=s_1, ~u=s_2-s_1,~\mathbf{w}=s_1\mathbf{1}-\mathbf{x})\\
&=2\int_0^t ds \int_0^{t-s} du \int_{\mathbb{R}_+^k}u^kd\mathbf{y}~
 |g(u\mathbf{y})g(u+u\mathbf{y})|  \\
&=2\int_0^t ds \int_0^{t-s} u^{2\alpha+k}du~\int_{\mathbb{R}_+^k}d\mathbf{y}~
 |g(\mathbf{y})g(\mathbf{1}+\mathbf{y})|  ~~~\qquad\qquad (\text{by Condition \ref{ass:homo} of Definition \ref{Def:GHK}}),
\end{align*}
where the last expression is finite by $2\alpha+k+1>0$  and Condition \ref{ass:int 2}. Hence by the same calculation, but without absolute values,
\begin{align*}
\int_{\mathbb{R}^k} h_t(\mathbf{x})^2 d\mathbf{x}
&=2\int_0^t ds \int_0^{t-s} u^{2\alpha+k}du~~ \int_{\mathbb{R}_+^k}d\mathbf{y}~
 g(\mathbf{y})g(\mathbf{1}+\mathbf{y}) \\
&=\frac{t^{2\alpha+k+2}}{(\alpha+k/2+1)(2\alpha+k+2)}\int_{\mathbb{R}_+^k}d\mathbf{y}~
 g(\mathbf{y})g(\mathbf{1}+\mathbf{y}).
\end{align*}

To check self-similarity (Condition \ref{SSSI:homo} of Proposition \ref{Pro:Construct H-sssi} with $\beta=-1$),
\begin{align*}
h_{\lambda t}( \mathbf{x})&=\int_0^{\lambda t} g(s\mathbf{1}-\mathbf{x})\mathrm{1}_{\{s\mathbf{1}>\mathbf{x}\}}ds=\lambda^{\alpha+1}\int_0^{t} g( r\mathbf{1}-\lambda^{-1}\mathbf{x})\mathrm{1}_{\{ r\mathbf{1}>\lambda^{-1}\mathbf{x}\}}\lambda dr
=\lambda^{\alpha+1}h_t(\lambda^{-1} \mathbf{x}),
\end{align*}
where the second equality uses Condition \ref{ass:homo} of Definition \ref{Def:GHK}. The Hurst coefficient $H$ of $I_k(h_t)$ is obtained from $\alpha+1=H-k/2$.
To check stationary increments
(Condition \ref{SSSI:stationary} of Proposition \ref{Pro:Construct H-sssi}), for any $t,r>0$,
\begin{align*}
h_{t+r}(\mathbf{x})-h_t(\mathbf{x})=\int_t^{t+r} g(s\mathbf{1}-\mathbf{x})\mathrm{1}_{\{s\mathbf{1}>\mathbf{x}\}}ds=\int_0^{r} g(u\mathbf{1}+t\mathbf{1}-\mathbf{x})\mathrm{1}_{\{u\mathbf{1}+t\mathbf{1}>\mathbf{x}\}} du=h_{r}(\mathbf{x}-t\mathbf{1}).
\end{align*}
\end{proof}

\begin{Rem}\label{Rem:byproduct}
As a byproduct of the above proof, we obtain that under the conditions  of Definition \ref{Def:GHK}, one has $\int_0^t |g(s\mathbf{1}-\mathbf{x})|\mathrm{1}_{\{s\mathbf{1}>\mathbf{x}\}}(s) ds<\infty$ for a.e.\ $\mathbf{x}\in \mathbb{R}^k$, and
$$
\E Z(t)^2 (k!)^{-1}\le \|h_t\|_{L^2(\mathbb{R}^k)}^2=\frac{t^{2H}}{H(2H-1)} C_g,
$$
where $C_g:=\int_{\mathbb{R}_+^k}g(\mathbf{x})g(\mathbf{1}+\mathbf{x})d\mathbf{x}$, and the first inequality becomes equality if $g$ and hence $h_t$ is symmetric. Note that $C_g>0$ must hold, otherwise $h_t(\mathbf{x})=\int_0^t g(s\mathbf{1}-\mathbf{x})\mathrm{1}_{\{s\mathbf{1}>\mathbf{x}\}} ds=0$ for a.e.\ $\mathbf{x}\in \mathbb{R}^k$ and any $t>0$, which implies that $g$ is  zero a.e., and thus contradicts  the assumption.
\end{Rem}

\begin{Rem}\label{Rem:symmetrization}
Since $\forall f\in L^2(\mathbb{R}^k)$, $I_k(f)=I_k(\tilde{f})$, where $\tilde{f}$ is the symmetrization of $f$ (\citet{nualart:2006:malliavin} p.9), it suffices to focus  on symmetric generalized Hermite kernels $g$ only. In the sequel, we will not always assume that $g$ is symmetric for convenience, while being aware that $g$ can always be symmetrized.
\end{Rem}
\begin{Def}\label{Def:GHP}
The process
\begin{equation}\label{eq:Z(t)}
Z(t):=\int_{\mathbb{R}^k}' ~\int_0^t g(s-x_1,\ldots,s-x_k) \mathrm{1}_{\{s>x_1,\ldots,s>x_k\}}ds~W(dx_1)\ldots W(dx_k)
\end{equation}
which we simply write
$Z(t)=I_k(h_t)$ with $h_t(\mathbf{x})=\int_0^t g(s\mathbf{1}-\mathbf{x})\mathrm{1}_{\{s\mathbf{1}>\mathbf{x}\}}ds$, where $g$ is a generalized Hermite kernel defined in Definition \ref{Def:GHK}, is called a \emph{generalized Hermite process}.
\end{Def}

\begin{Rem}
It is known (see, e.g., \citet{janson:1997:gaussian} Theorem 6.12) that if a  random variable $X$ belongs to the $k$-th Wiener chaos, then there $\exists  a,b,t_0>0$ such that for $t\ge t_0$, $$\exp(-at^{2/k})\le P(|X|>t) \le \exp(-bt^{2/k}).$$
This shows that the generalized Hermite processes of different orders must necessarily have different laws, and the higher the order gets, the heavier the tail of the marginal distribution becomes, while they all have moments of any order.
\end{Rem}

The generalized Hermite process $Z(t)$ admits a continuous version, which follows from the following general result:
\begin{Pro}
If $\{Z(t),t\ge 0\}$ is an $H$-sssi process whose marginal distribution satisfies $\E|Z(1)|^\gamma<\infty$ for some $\gamma>H^{-1}$, then $Z(t)$ admits a continuous version.
\end{Pro}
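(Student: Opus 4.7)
The plan is to apply the classical Kolmogorov continuity criterion, which says that if a process $\{Z(t)\}$ on a bounded interval satisfies $\mathbb{E}|Z(t)-Z(s)|^\gamma \le C|t-s|^{1+\delta}$ for some constants $\gamma,\delta,C>0$, then it admits a continuous version. The task therefore reduces to producing such a moment bound from the two hypotheses: self-similarity with index $H$, and $\mathbb{E}|Z(1)|^\gamma<\infty$ for some $\gamma>H^{-1}$.

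First I would exploit the stationary increments: for $0\le s\le t$, $Z(t)-Z(s)\overset{d}{=}Z(t-s)-Z(0)$, and since $Z$ is $H$-sssi one has $Z(0)=0$ almost surely (indeed $Z(0)\overset{d}{=}a^H Z(0)$ for every $a>0$ forces $Z(0)=0$). Next I would apply self-similarity with scaling factor $a=t-s$ to obtain $Z(t-s)\overset{d}{=}(t-s)^H Z(1)$. Combining these identities in law gives
\[
\mathbb{E}|Z(t)-Z(s)|^\gamma = (t-s)^{H\gamma}\,\mathbb{E}|Z(1)|^\gamma.
\]
By assumption $H\gamma>1$, so setting $\delta:=H\gamma-1>0$ and $C:=\mathbb{E}|Z(1)|^\gamma<\infty$ verifies the Kolmogorov hypothesis on every bounded subinterval $[0,T]$.

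Kolmogorov's theorem then yields a continuous version of $\{Z(t),t\in[0,T]\}$ for each $T$; a standard consistency/patching argument (the restrictions of continuous versions on $[0,T]$ and $[0,T']$ with $T<T'$ agree on $[0,T]$ up to indistinguishability) produces a continuous version on all of $[0,\infty)$. I do not see a serious obstacle here: the only subtlety worth noting is the step $Z(0)=0$, which is needed to pass from ``$Z(t)-Z(s)\overset{d}{=}Z(t-s)-Z(0)$'' to ``$\overset{d}{=}Z(t-s)$'', and the choice of a single $\gamma$ with $H\gamma>1$, which is precisely what the hypothesis $\gamma>H^{-1}$ supplies.
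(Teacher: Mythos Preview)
Your proposal is correct and follows essentially the same approach as the paper's own proof: both use stationary increments and self-similarity to obtain $\E|Z(t)-Z(s)|^\gamma=|t-s|^{H\gamma}\E|Z(1)|^\gamma$ and then invoke Kolmogorov's continuity criterion since $H\gamma>1$. You simply supply more detail (the justification that $Z(0)=0$ and the patching over increasing intervals), which the paper leaves implicit.
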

\begin{proof}
Using stationary increments and self-similarity, we have
$$
\E|Z(t)-Z(s)|^\gamma =\E|Z(t-s)|^\gamma=|t-s|^{H\gamma}\E|Z(1)|^\gamma.
 $$
 Since $H\gamma>1$,   Kolmogorov's criterion applies.
\end{proof}
\begin{Rem}
In \citet{mori:toshio:1986:law}, the following laws of iterated logarithm
are obtained for the generalized Hermite process $Z(t)$:
\begin{align*}
\limsup_{n\rightarrow\infty} \frac{Z(n)}{n^H (2\log_2 n)^{k/2}}=l_1, \quad
\liminf_{n\rightarrow\infty} \frac{Z(n)}{n^H (2\log_2 n)^{k/2}}=l_2 ~\text{ a.s.},
\end{align*}
where $l_1=\sup K_h$ and $l_2=\inf K_h$ with the set
\[
K_h:=\left\{ \int_{\mathbb{R}^k}h_1(\mathbf{x})\xi(x_1)\ldots \xi(x_k)d\mathbf{x}:\|\xi\|_{L^2(\mathbb{R})}\le 1\right\}.
\]
\end{Rem}

In the spirit of  (\ref{eq:Herm SpecDomain}), we can consider the spectral-domain representation of the generalized Hermite processes. Since  $h_t(\mathbf{x})=\int_0^t g(s\mathbf{1}-\mathbf{x})\mathrm{1}_{\{s\mathbf{1}>\mathbf{x}\}}(s) ds\in L^2(\mathbb{R})$, it always has an $L^2$-sense Fourier transform  $\widehat{h}_t$.
We give  an explicit way to calculate $\widehat{h}_t$ when $g$ is integrable in a neighborhood of the origin. Note that since $g$ is homogeneous, it suffices to assume integrability on the unit cube $(0,1]^k$.
\begin{Pro}\label{Pro:ComputeFT}
Suppose that
\begin{equation}
\int_{(0,1]^k}|g(\mathbf{x})|<\infty \label{ass:int 1}.
\end{equation}
Let $g_n(\mathbf{x})=g(\mathbf{x})\mathrm{1}_{(0,n]^k}(\mathbf{x})$, and $\widehat{g}_n(\mathbf{u}):=\int_{\mathbb{R}^k}g_n(\mathbf{x})e^{i\langle \mathbf{u},\mathbf{x} \rangle}d\mathbf{x}$ be its Fourier transform. Set
$$
\widehat{h}_{t,n}:=\frac{e^{it\langle\mathbf{u},\mathbf{1} \rangle}-1}{i\langle\mathbf{u},\mathbf{1} \rangle}\widehat{g}_n(-\mathbf{u}),
$$
then $\widehat{h}_{t,n}$ converges in $L^2(\mathbb{R}^k)$ to $\widehat{h}_t$. Moreover, there is a function $\widehat{g}(\mathbf{u})$ defined for a.e.\ $\mathbf{u}\in \mathbb{R}^k$, such that,
\begin{equation}\label{eq:general spec}
\widehat{h}_t(\mathbf{u})=\frac{e^{it\langle\mathbf{u},\mathbf{1} \rangle}-1}{i\langle\mathbf{u},\mathbf{1} \rangle}\widehat{g}(-\mathbf{u}).
\end{equation}
\end{Pro}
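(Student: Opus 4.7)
The plan is to compute $\widehat{h}_{t,n}$ explicitly via Fubini on the truncated kernel, upgrade to $L^2$-convergence of the Fourier transforms, and then divide out the $t$-dependent factor $\phi_t(\mathbf{u}):=(e^{it\langle\mathbf{u},\mathbf{1}\rangle}-1)/(i\langle\mathbf{u},\mathbf{1}\rangle)$ to isolate a single, $t$-independent function $\widehat{g}$.

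First, hypothesis (\ref{ass:int 1}) combined with the homogeneity Condition~\ref{ass:homo} of Definition~\ref{Def:GHK} gives, via the substitution $\mathbf{x}=n\mathbf{y}$, that $\int_{(0,n]^k}|g(\mathbf{x})|d\mathbf{x}=n^{k+\alpha}\int_{(0,1]^k}|g(\mathbf{y})|d\mathbf{y}<\infty$, so $g_n\in L^1(\mathbb{R}^k)$ and $\widehat{g}_n$ is well-defined pointwise. Since $g_n$ vanishes outside $\mathbb{R}_+^k$, one has $h_{t,n}(\mathbf{x})=\int_0^t g_n(s\mathbf{1}-\mathbf{x})ds$. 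The absolute integrand in $\int_{\mathbb{R}^k}\int_0^t|g_n(s\mathbf{1}-\mathbf{x})|ds\,d\mathbf{x}$ equals $t\|g_n\|_{L^1}<\infty$, so Fubini applies, and the volume-preserving substitution $\mathbf{y}=s\mathbf{1}-\mathbf{x}$ decouples the integrals to produce $\widehat{h}_{t,n}(\mathbf{u})=\phi_t(\mathbf{u})\widehat{g}_n(-\mathbf{u})$, the stated closed form.

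Second, I would pass $n\to\infty$. For a.e.\ $\mathbf{x}$, Remark~\ref{Rem:byproduct} guarantees $s\mapsto|g(s\mathbf{1}-\mathbf{x})|\mathbf{1}_{\{s\mathbf{1}>\mathbf{x}\}}\in L^1(0,t)$, and since $\mathbf{1}_{\{s\mathbf{1}-\mathbf{x}\in(0,n]^k\}}\uparrow \mathbf{1}_{\{s\mathbf{1}>\mathbf{x}\}}$ as $n\to\infty$, dominated convergence in $s$ yields $h_{t,n}(\mathbf{x})\to h_t(\mathbf{x})$. The same bound dominates $|h_{t,n}|$, and its square is integrable by the very computation in the proof of Theorem~\ref{Thm:is H-sssi} applied to $|g|$ (Condition~\ref{ass:int 2} of Definition~\ref{Def:GHK}); a second use of dominated convergence upgrades to $h_{t,n}\to h_t$ in $L^2(\mathbb{R}^k)$, whence $\widehat{h}_{t,n}\to\widehat{h}_t$ in $L^2$ by $L^2$-continuity of the Fourier transform.

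Third, define $\widehat{g}(-\mathbf{u}):=\widehat{h}_1(\mathbf{u})/\phi_1(\mathbf{u})$ on the full-measure set $\{\phi_1\ne 0\}$ (its complement being the union of hyperplanes $\{\langle\mathbf{u},\mathbf{1}\rangle\in 2\pi\mathbb{Z}\setminus\{0\}\}$). To verify (\ref{eq:general spec}) for general $t$, I would take $L^2$-limits in the pointwise identity $\phi_1(\mathbf{u})\widehat{h}_{t,n}(\mathbf{u})=\phi_t(\mathbf{u})\widehat{h}_{1,n}(\mathbf{u})$, using $|\phi_\tau|\le \tau$ so that multiplication by $\phi_\tau$ is a bounded operator on $L^2$; this gives $\phi_1\widehat{h}_t=\phi_t\widehat{h}_1$ a.e., which rearranges to (\ref{eq:general spec}). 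I expect this last step to be the delicate point: one cannot hope for pointwise convergence of $\widehat{g}_n$ directly, because $\|g_n\|_{L^1}=O(n^{k+\alpha})$ typically diverges, so the limiting $\widehat{g}$ must be extracted indirectly through the quotient $\widehat{h}_1/\phi_1$ rather than as a naive limit of $\widehat{g}_n$ itself.
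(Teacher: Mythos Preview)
Your first two steps match the paper's proof almost exactly: truncate, use Fubini (justified by $g_n\in L^1$) to obtain the closed form $\widehat{h}_{t,n}=\phi_t\,\widehat{g}_n(-\cdot)$, then apply dominated convergence twice (first in $s$, then in $\mathbf{x}$, with the dominator $\int_0^t|g(s\mathbf{1}-\mathbf{x})|\mathrm{1}_{\{s\mathbf{1}>\mathbf{x}\}}ds$) and Plancherel to get $\widehat{h}_{t,n}\to\widehat{h}_t$ in $L^2$.

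Where you diverge is in the extraction of $\widehat{g}$. You define $\widehat{g}(-\mathbf{u}):=\widehat{h}_1(\mathbf{u})/\phi_1(\mathbf{u})$ on $\{\phi_1\ne 0\}$ and then verify (\ref{eq:general spec}) for all $t$ by passing to the $L^2$-limit in the cross-identity $\phi_1\widehat{h}_{t,n}=\phi_t\widehat{h}_{1,n}$, using the elementary bound $|\phi_\tau|\le\tau$ to make multiplication by $\phi_\tau$ continuous on $L^2$. This is correct and clean. The paper instead introduces the weighted measure $d\mu_t=|\phi_t|^2d\mathbf{u}$, observes that $\widehat{g}_n$ is Cauchy in $L^2(\mathbb{R}^k,\mu_t)$ (because $\phi_t\,\widehat{g}_n(-\cdot)=\widehat{h}_{t,n}$ is Cauchy in ordinary $L^2$), and takes $\widehat{g}$ to be the limit in that weighted space; equivalence of $\mu_t$ with Lebesgue measure then pins $\widehat{g}$ down a.e.\ and yields an a.e.-convergent subsequence of $\widehat{g}_n$. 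Your approach is shorter, but the paper's buys something you explicitly disavow in your last sentence: it realizes $\widehat{g}$ as an a.e.\ subsequential limit of $\widehat{g}_n$, and this is exactly what the paper invokes in the proofs of Proposition~\ref{Pro:hat g homo} and Proposition~\ref{Pro:ComputeFT frac}. So your proof of the stated proposition is complete, but if you continued with your definition you would need a separate argument for those later results.
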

\begin{proof}
 Due to (\ref{ass:int 1}), the Fourier transform of $g_n$ is well-defined pointwise  as
\begin{equation}\label{eq:hatg_n}
\widehat{g}_n(\mathbf{u})=\int_{\mathbb{R}^k}g(\mathbf{x})\mathrm{1}_{(0,n]^k}(\mathbf{x})e^{i\langle \mathbf{u},\mathbf{x} \rangle}d\mathbf{x}.
\end{equation}
Let
$$
h_{t,n}(\mathbf{x})=\int_0^t g_n(s\mathbf{1}-\mathbf{x})\mathrm{1}_{\{s\mathbf{1}>\mathbf{x}\}}(s) ds=\int_0^t g(s\mathbf{1}-\mathbf{x})\mathrm{1}_{\{\mathbf{x}<s\mathbf{1}\le \mathbf{x}+n\mathbf{1}\}}(s) ds.
 $$
 Note that $|g_n(\mathbf{x})|\le |g(\mathbf{x})|$, so  by the proof of Theorem \ref{Thm:is H-sssi}, $h_{t,n}(\mathbf{x})\in L^2(\mathbb{R}^k)$, and  by the Dominated Convergence Theorem, $h_{t,n}$ converges to $h_{t}$ pointwise as $n\rightarrow\infty$.  Since $|h_{t,n}|\le \int_0^t |g(s\mathbf{1}-\mathbf{x})|\mathrm{1}_{\{s\mathbf{1}>\mathbf{x}\}}(s) ds$, by  the Dominated Convergence Theorem in $L^2(\mathbb{R}^k)$,  $h_{t,n}$ converges to $h_{t}$ in $L^2(\mathbb{R}^k)$. By Plancherel's isometry,  $\widehat{h}_{t,n}$, the  Fourier transform of $h_{t,n}$, converges in $L^2(\mathbb{R}^k)$ to $\widehat{h}_t$.
But
\begin{align}
\widehat{h}_{t,n}(\mathbf{u}):=&\int_{\mathbb{R}^k} \int_{0}^t g(s\mathbf{1}-\mathbf{x}) \mathrm{1}_{\{\mathbf{x}<s\mathbf{1}\le\mathbf{x}+ n\mathbf{1}\}}(s)ds~ e^{i\langle \mathbf{u},\mathbf{x}\rangle}d\mathbf{x}\notag\\
=& \int_{0}^t \int_{\mathbb{R}^k} e^{i\langle\mathbf{u},s\mathbf{1}\rangle} g(s\mathbf{1}-\mathbf{x})e^{i\langle -\mathbf{u},s\mathbf{1}-\mathbf{x}\rangle}\mathrm{1}_{\{\mathbf{0}< s\mathbf{1}-\mathbf{x}\le n\mathbf{1}\}}(\mathbf{x}) d\mathbf{x} ds\notag\\
=&\int_{0}^t e^{i\langle\mathbf{u},s\mathbf{1}\rangle} ds\int_{\mathbb{R}^k}  g(\mathbf{y})\mathrm{1}_{\{\mathbf{0}<\mathbf{y}\le n\mathbf{1}\}}e^{i\langle -\mathbf{u},\mathbf{y}\rangle} d\mathbf{y} \notag\\
=& \frac{e^{it\langle\mathbf{u},\mathbf{1} \rangle}-1}{i\langle\mathbf{u},\mathbf{1} \rangle}\widehat{g}_n(-\mathbf{u}),\label{eq:compute fourier h_tn}
\end{align}
where the change of integration order is valid because by (\ref{ass:int 1}),
$$\int_0^tds \int_{\mathbb{R}^k}d\mathbf{x}|g(s\mathbf{1}-\mathbf{x})| \mathrm{1}_{\{\mathbf{x}<s\mathbf{1}\le\mathbf{x}+ n\mathbf{1}\}}=\int_0^tds \int_{\mathbb{R}^k}|g(\mathbf{y})|\mathrm{1}_{\{\mathbf{0}<\mathbf{y}\le n\mathbf{1}\}} d\mathbf{y}<\infty.$$

We now prove (\ref{eq:general spec}). The fact that $\widehat{h}_{t,n}$ converges in $L^2(\mathbb{R}^k)$ to $\widehat{h}_t$  implies that $\widehat{g}_n$ is a Cauchy sequence in $L^2(\mathbb{R}^k,\mu_t)$, where $\mu_t$ is the measure given by $$\mu_t(A)=\int_A \left|\frac{e^{it\langle\mathbf{u},\mathbf{1} \rangle}-1}{i\langle\mathbf{u},\mathbf{1} \rangle}\right|^2d\mathbf{u}=\int_A\frac{2-2\cos(t \langle\mathbf{u},\mathbf{1}\rangle)}{\langle\mathbf{u},\mathbf{1}\rangle^2}d\mathbf{u} $$
for any measurable set $A\subset \mathbb{R}^k$. Hence there exists a $\widehat{g} \in L^2(\mathbb{R}^k,\mu_t)$ which is the limit of $\widehat{g}_n$  in $L^2(\mathbb{R}^k,\mu_t)$. Since $\mu_t$ is equivalent to Lebesgue measure, $\widehat{g}$ is determined a.e.\ on $\mathbb{R}^k$, and there exists a subsequence of $\widehat{g}_n$ that converges a.e.\  to $\widehat{g}$. So (\ref{eq:general spec}) holds.

\end{proof}
\begin{Rem}
Note that $\widehat{g}$ is not the $L^2$-sense Fourier transform of $g\mathrm{1}_{\mathbb{R}^k_+}$, since $g\notin L^2{(\mathbb{R}^k_+)}$. One can, however,  evaluate the limit of $\widehat{g}_n$ pointwise as an improper  integral, as is done in the Hermite kernel case (\ref{eq:original g}) (see Lemma 6.2 of \citet{taqqu:1979:convergence}).
\end{Rem}
The limit $\widehat{g}$ in (\ref{eq:general spec}) is also a homogeneous function:
\begin{Pro}\label{Pro:hat g homo}
The function $\widehat{g}$ defined in Remark \ref{Pro:ComputeFT} satisfies  for any $\lambda>0$,
$g(\lambda \mathbf{u})=\lambda^{-\alpha-k}\widehat{g}(\mathbf{u})$ for a.e.\ $\mathbf{u}\in \mathbb{R}^k$.
\end{Pro}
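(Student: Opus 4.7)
The plan is to derive the homogeneity of $\widehat{g}$ from the self-similarity of $h_t$ established in Theorem \ref{Thm:is H-sssi}, combined with the explicit spectral-domain representation (\ref{eq:general spec}) from Proposition \ref{Pro:ComputeFT}. The key observation is that $\widehat{g}$ only enters the spectral formula through a multiplicative factor, so any scaling relation on $\widehat{h}_t$ that is forced by self-similarity will transfer directly to $\widehat{g}$ after cancelling the remaining factor.

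First, I would start from $h_{\lambda t}(\mathbf{x})=\lambda^{\alpha+1}h_t(\lambda^{-1}\mathbf{x})$ (which is an a.e.\ identity of $L^2(\mathbb{R}^k)$ functions, by Theorem \ref{Thm:is H-sssi}). Taking Fourier transforms and using the standard dilation rule $\widehat{h_t(\lambda^{-1}\cdot)}(\mathbf{u})=\lambda^k\widehat{h}_t(\lambda\mathbf{u})$, one obtains, as an identity in $L^2(\mathbb{R}^k)$,
\[
\widehat{h}_{\lambda t}(\mathbf{u})=\lambda^{\alpha+1+k}\widehat{h}_t(\lambda\mathbf{u}).
\]
Then I would substitute (\ref{eq:general spec}) on both sides. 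On the left,
$\widehat{h}_{\lambda t}(\mathbf{u})=\frac{e^{i\lambda t\langle\mathbf{u},\mathbf{1}\rangle}-1}{i\langle\mathbf{u},\mathbf{1}\rangle}\widehat{g}(-\mathbf{u})$, while on the right, using $\langle\lambda\mathbf{u},\mathbf{1}\rangle=\lambda\langle\mathbf{u},\mathbf{1}\rangle$,
\[
\lambda^{\alpha+1+k}\widehat{h}_t(\lambda\mathbf{u})=\lambda^{\alpha+k}\,\frac{e^{i\lambda t\langle\mathbf{u},\mathbf{1}\rangle}-1}{i\langle\mathbf{u},\mathbf{1}\rangle}\,\widehat{g}(-\lambda\mathbf{u}).
\]
The factor $\frac{e^{i\lambda t\langle\mathbf{u},\mathbf{1}\rangle}-1}{i\langle\mathbf{u},\mathbf{1}\rangle}$ vanishes only on the Lebesgue-null set $\{\mathbf{u}:\lambda t\langle\mathbf{u},\mathbf{1}\rangle\in 2\pi\mathbb{Z}\}$, so outside a null set we may cancel it to conclude $\widehat{g}(-\mathbf{u})=\lambda^{-\alpha-k}\widehat{g}(-\lambda\mathbf{u})$, i.e.\ $\widehat{g}(\lambda\mathbf{u})=\lambda^{-\alpha-k}\widehat{g}(\mathbf{u})$ for a.e.\ $\mathbf{u}\in\mathbb{R}^k$, which is exactly the statement.

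I do not anticipate a serious obstacle. The only mild point to track is that all the identities above are a priori $L^2$ identities, hence pointwise a.e.\ identities of the corresponding equivalence classes; the null-exceptional set in $\mathbf{u}$ may depend on $\lambda$, but since the dilation $\mathbf{u}\mapsto\lambda\mathbf{u}$ preserves Lebesgue-null sets, no issue arises in reading the identity as a genuine homogeneity statement for the a.e.-defined function $\widehat{g}$ from Proposition \ref{Pro:ComputeFT}. (A direct alternative would be to pass to the limit in the pointwise scaling identity $\widehat{g}_n(\lambda\mathbf{u})=\lambda^{-\alpha-k}\widehat{g}_{\lambda n}(\mathbf{u})$, which follows immediately from the change of variables $\mathbf{y}=\lambda\mathbf{x}$ in (\ref{eq:hatg_n}) and Condition \ref{ass:homo}, but the $h_t$ route above is cleaner because it reuses the $L^2(\mu_t)$ convergence already established.)
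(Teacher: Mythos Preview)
Your argument is correct, but it is not the route the paper takes. The paper works directly with the truncated transforms: from (\ref{eq:hatg_n}) and Condition~\ref{ass:homo} one computes $\widehat{g}_n(\lambda\mathbf{u})=\lambda^{-\alpha-k}\widehat{g}_{n\lambda}(\mathbf{u})$ by the change of variables $\mathbf{y}=\lambda\mathbf{x}$, and then lets $n\to\infty$ along a subsequence on which both sides converge a.e.\ (this subsequence exists by Proposition~\ref{Pro:ComputeFT}). This is exactly the ``direct alternative'' you mention parenthetically at the end. Your main route instead pushes the already-established scaling $h_{\lambda t}(\mathbf{x})=\lambda^{\alpha+1}h_t(\lambda^{-1}\mathbf{x})$ through the Fourier transform and then reads off the homogeneity of $\widehat{g}$ from the factored form (\ref{eq:general spec}). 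The paper's proof is shorter and uses nothing beyond the definition of $\widehat{g}_n$ and the a.e.\ subsequence limit; your approach is more conceptual---it makes transparent that homogeneity of $\widehat{g}$ is precisely the spectral-side counterpart of the self-similarity of $Z(t)$---but it leans on strictly more of the earlier development (Theorem~\ref{Thm:is H-sssi} and the full conclusion of Proposition~\ref{Pro:ComputeFT}).

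One small slip: after cancelling the common factor you should get $\widehat{g}(-\mathbf{u})=\lambda^{\alpha+k}\widehat{g}(-\lambda\mathbf{u})$, equivalently $\widehat{g}(-\lambda\mathbf{u})=\lambda^{-\alpha-k}\widehat{g}(-\mathbf{u})$, which then gives the claim upon relabeling. Your intermediate line has the exponent on the wrong side, though your final conclusion is correct.
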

\begin{proof}
Following (\ref{eq:hatg_n}) and using  Condition \ref{ass:homo} of Definition \ref{Def:GHK}, and noting that $\langle \lambda\mathbf{u},\mathbf{x}\rangle=\langle \mathbf{u},\lambda\mathbf{x}\rangle$,  we have
\begin{align*}
\widehat{g}_n(\lambda\mathbf{u})&=\lambda^{-\alpha}\int_{\mathbb{R}^k}g(\lambda\mathbf{x})\mathrm{1}_{(0,n]^k}(\mathbf{x})e^{i\langle \mathbf{u},\lambda\mathbf{x} \rangle}d\mathbf{x}
=\lambda^{-\alpha-k}\int_{\mathbb{R}^k}g(\mathbf{y})\mathrm{1}_{(0,\lambda n]^k}(\mathbf{\mathbf{y}})e^{i\langle \mathbf{u},\mathbf{y} \rangle}d\mathbf{y}
=\lambda^{-\alpha-k}\widehat{g}_{n\lambda }(\mathbf{u}).
\end{align*}
Then let $n\rightarrow\infty$ through a subsequence so that both sides converge a.e..
\end{proof}

\begin{Rem}
 The spectral-domain representation of the Hermite process in (\ref{eq:Herm SpecDomain}) is indeed obtained as $\widehat{g}(\mathbf{u})=c\prod_{j=1}^k |u_k|^{-d}w(\mathbf{u})$ for some constant $c>0$, where the function $w(\mathbf{u})=\prod_{j=1}^k \exp\left(-\mathrm{sign}(u_j)\frac{i\pi d}{2}\right)$ can be omitted (see Proposition \ref{Pro:Time<->Spec}).
\end{Rem}

\subsection{Special kernels and examples}\label{Subsec:special}
We  introduce now some subclasses of the generalized Hermite kernels $g$ defined in Definition \ref{Def:GHK}, which will  be of interest later when dealing with limit theorems. Note that the kernel $g$ is determined by its value on the positive unit sphere $\mathcal{S}_+^k:=\{\mathbf{x}\in \mathbb{R}_+^k, \|\mathbf{x}\|= 1\}$.  Because it is homogeneous, $g$ is always radially continuous and it is decreasing since $\alpha<0$ in Definition \ref{Def:GHK}. Thus assuming  that $g$ is  continuous  on $\mathcal{S}_+^k$  a.e.\ (with respect to the uniform measure on the $\mathcal{S}_+^k$) is the same as assuming  $g$ is continuous a.e.\ on $\mathbb{R}_+^k$ .
\begin{Def}\label{Def:class bounded}
We say that a generalized Hermite kernel $g$ is of Class (B) (B stands for ``boundedness''), if on ${\mathcal{S}_+^k}$,  it is continuous  a.e.  and bounded. Consequently,
$$
|g(\mathbf{x})| \le \|\mathbf{x}\|^\alpha g(\mathbf{x}/\|\mathbf{x}\|)\le c \|\mathbf{x}\|^\alpha
 $$
 for some $c>0$.
\end{Def}

\begin{Rem}
According to Lemma 7.1 of \citet{mori:toshio:1986:law}, Class (B) forms a dense subclass of the class of generalized Hermite kernels in the sense that for any generalized Hermite kernel $g$ and any $\epsilon>0$, there exists $g_\epsilon$ in Class (B), such that $\|h-h_\epsilon\|_{L^2(\mathbb{R}^k)}<\epsilon$, where $h(\mathbf{x})=\int_0^1 g(s\mathbf{1}-\mathbf{x})\mathrm{1}_{\{s\mathbf{1}>\mathbf{x}\}}ds$  and $h_\epsilon(\mathbf{x})=\int_0^1 g_\epsilon(s\mathbf{1}-\mathbf{x})\mathrm{1}_{\{s\mathbf{1}>\mathbf{x}\}}ds$.
\end{Rem}

Note that Class (B) does not include the original Hermite kernel in (\ref{eq:original g}). We now introduce a class of generalized Hermite kernels, called Class (L), which includes generalized Hermite kernels of the form:
\begin{equation}\label{eq:nonsym Herm}
g(\mathbf{x})=\prod_{j=1}^k x_j^{\gamma_j},
\end{equation}
where each $-1<\gamma_j<-1/2$ and $-k/2-1/2<\sum_j  \gamma_j < -k/2$. These particular kernels with $k=2$ has been considered in \citet{maejima:tudor:2012:selfsimilar} where the resulting process is called  non-symmetric Rosenblatt process. We  hence call the kernel in (\ref{eq:nonsym Herm}) a \emph{non-symmetric Hermite kernel}. Note that despite the name, one can always symmetrize these kernels.
Class (L) will appear in the discrete chaos processes and the limit theorems considered later.
\begin{Def}\label{Def:class limit}
We say that a generalized Hermite kernel $g$ on $\mathbb{R}_+^k$ having homogeneity exponent $\alpha$ is of Class (L) ($L$ stands for ``limit'' as in ``limit theorems''), if
\begin{enumerate}
\item $g$ is continuous a.e.\ on $\mathbb{R}^k_+$;
\item $|g(\mathbf{x})|\le  g^*(\mathbf{x})$ a.e.\ $\mathbf{x}\in \mathbb{R}_+^k$, where $g^*$ is a finite linear combination of non-symmetric Hermite kernels: $\prod_{j=1}^k x_j^{\gamma_j}$, where $\gamma_j\in (-1,-1/2)$, $j=1,\ldots,k$, and $\sum_{j=1}^k \gamma_j=\alpha\in(-k/2-1/2,-k/2)$.
\end{enumerate}
\end{Def}
For example, $g^*(\mathbf{x})$  could be $x_1^{-3/4} x_2^{-5/8}+x_1^{-9/16}x_2^{-13/16}$ if $k=2$. In this case, $\alpha=-11/8$.
\begin{Rem}\label{Rem:L good}
If two functions $g_1$ and $g_2$ on $\mathbb{R}_+^k$ satisfy Condition 2 of Definition \ref{Def:class limit},  then  $\int_{\mathbb{R}_+^k}|g_1(\mathbf{x})g_2(\mathbf{1}+\mathbf{x})|d\mathbf{x}<\infty$ automatically holds, which can be seen by using the following identity: for any $\gamma,\delta\in (-1,-1/2)$,
$$\int_0^\infty x^\gamma(1+x)^\delta dx=\mathrm{B}(\gamma+1,-\gamma-\delta-1),$$
 where $\mathrm{B}(\cdot,\cdot)$ is the beta function.
In addition, $\int_{(0,1]^k}|g_1(\mathbf{x})|d\mathbf{x}<\infty$ also holds.
\end{Rem}

\begin{Pro}\label{Pro:L > B}
Class (L) contains  Class (B).
\end{Pro}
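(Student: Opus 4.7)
The plan is to exhibit, for a given $g\in$ Class (B), an explicit non-symmetric Hermite kernel $g^*$ dominating $|g|$, and to check the continuity clause of Definition \ref{Def:class limit} directly.

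First I would verify a.e.\ continuity on $\mathbb{R}_+^k$. By definition of Class (B), $g$ is continuous a.e.\ on $\mathcal{S}_+^k$, and by homogeneity $g(\mathbf{x})=\|\mathbf{x}\|^\alpha g(\mathbf{x}/\|\mathbf{x}\|)$. The set $D$ of discontinuities of $g$ on $\mathbb{R}_+^k$ is the radial extension of the discontinuity set on $\mathcal{S}_+^k$; passing to polar coordinates shows that $D$ has Lebesgue measure zero whenever the corresponding set on the sphere is a null set for the surface measure. Hence condition~1 of Definition \ref{Def:class limit} holds.

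Next I would produce the dominating function. Since $g$ is of Class (B), there is $c>0$ with $|g(\mathbf{x})|\le c\|\mathbf{x}\|^{\alpha}$. Because $\alpha\in(-k/2-1/2,-k/2)$, the value $\gamma:=\alpha/k$ satisfies $\gamma\in(-1/2-1/(2k),-1/2)\subset(-1,-1/2)$, and $\sum_{j=1}^k \gamma=\alpha$. Setting $\gamma_j=\gamma$ for each $j$, the elementary inequality $x_j\le \|\mathbf{x}\|$ combined with $\gamma_j<0$ gives $x_j^{\gamma_j}\ge \|\mathbf{x}\|^{\gamma_j}$, so
\[
\prod_{j=1}^k x_j^{\gamma_j}\ge \|\mathbf{x}\|^{\sum_{j=1}^k\gamma_j}=\|\mathbf{x}\|^{\alpha}.
\]
Consequently
\[
|g(\mathbf{x})|\le c\|\mathbf{x}\|^{\alpha}\le c\prod_{j=1}^k x_j^{\gamma_j}=:g^*(\mathbf{x}),
\]
so $g^*$ is a (single-term) finite linear combination of non-symmetric Hermite kernels of the required form, which verifies condition~2 of Definition \ref{Def:class limit}.

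Combining the two steps yields $g\in$ Class (L), proving the inclusion. There is no real obstacle here; the only mild point is the a.e.\ continuity transfer from $\mathcal{S}_+^k$ to $\mathbb{R}_+^k$, which the polar-coordinate argument handles immediately. The choice $\gamma_j=\alpha/k$ is the cleanest, but any tuple $(\gamma_1,\dots,\gamma_k)\in(-1,-1/2)^k$ summing to $\alpha$ would do, and such tuples exist precisely because $\alpha\in(-k/2-1/2,-k/2)\subset(-k,-k/2)$.
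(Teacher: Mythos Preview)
Your proof is correct and follows essentially the same route as the paper: both bound $|g(\mathbf{x})|$ by $c\|\mathbf{x}\|^\alpha$ and then dominate $\|\mathbf{x}\|^\alpha$ by $C\prod_{j} x_j^{\alpha/k}$, arriving at the same single-term $g^*$ with $\gamma_j=\alpha/k$. The only cosmetic differences are that the paper invokes the arithmetic--geometric mean inequality on $(x_1^2,\dots,x_k^2)$ where you use the more direct $x_j\le\|\mathbf{x}\|$, and your explicit continuity check is handled in the paper by the remark immediately preceding Definition~\ref{Def:class bounded}.
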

\begin{proof}
Suppose $g$ is a generalized Hermite kernel of Class (B). Then there exist contants $C_1,C_2>0$, such that
\[
|g(\mathbf{x})|\le C_1 \|\mathbf{x}\|^{\alpha}=C_1\left(\sum_{j=1}^k x_j^2\right)^{\alpha /2}\le C_2 \prod_{j=1}^k x_j^{\alpha/k},
\]
where we have used the arithmetic-geometric mean inequality $k^{-1}\sum_{j=1}^ky_j\ge \left(\prod_{j=1}^k y_j\right)^{1/k}$ and $\alpha<0$. So Condition 2 of Definition \ref{Def:class limit} is satisfied with $g^*$ being a single term where $\gamma_1=\ldots=\gamma_k=\alpha/k$.
\end{proof}

\begin{Rem}
In view of Remark \ref{Rem:byproduct} and Remark \ref{Rem:L good}, one can check that Class (B) or Class (L) if adding in the a.e. $0$-valued function, with fixed order $k$ and fixed homogeneity component $\alpha\in (-k/2-1/2,-k/2)$, forms an inner product space, with the inner product specified as
\begin{align*}
\langle g_1,g_2\rangle &:=\left\langle\int_0^1 g_1(s\mathbf{1}-\cdot)ds ,\int_0^1 g_2(s\mathbf{1}-\cdot)ds \right\rangle_{L^2(\mathbb{R}^k)} \\
&=\frac{1}{2H(2H-1)}\int_{\mathbb{R}_+^k}g_1(\mathbf{x})g_2(\mathbf{1}+\mathbf{x})+g_1(\mathbf{1}+\mathbf{x})g_2(\mathbf{x})d\mathbf{x},
\end{align*}
where $H=\alpha+k/2+1$, which yields the norm
$$\|g\|:=\left\|\int_0^1 g(s\mathbf{1}-\cdot)ds\right\|_{L^2(\mathbb{R}^k)}=\left(\frac{1}{H(2H-1)}\int_{\mathbb{R}_+^k}g(\mathbf{x})g(\mathbf{1}+\mathbf{x})d\mathbf{x}\right)^{1/2}.
$$
\end{Rem}

Here are several examples.
\begin{Eg}
Suppose $g(\mathbf{x})=\|\mathbf{x}\|^\alpha$, where $\alpha\in(-1/2-k/2,-k/2)$. This $g$ belongs to Class (B) and thus also Class (L). The pseudo-Fourier transform (Proposition \ref{Pro:ComputeFT}) of $g$  is  $\widehat{g}(\mathbf{u})=c\|\mathbf{u}\|^{-\alpha-k}$ ((25.25) of \citet{samko:1993:fractional}) for some constant $c>0$, which provides the spectral representation by (\ref{eq:general spec}).
\end{Eg}

\begin{Eg}\label{Eg:not M}
 Another example of  Class (B):
$$g(\mathbf{x})=\frac{\prod_{j=1}^k x_j^{a_j}}{\sum_{j=1}^kx_j^b},$$
where $a_j>0$ and $b>0$, yielding  a homogeneity exponent  $\alpha=\sum_{j=1}^k a_j-b\in (-1/2-k/2,-k/2)$.
\end{Eg}

\begin{Eg}\label{eg:L}
We give yet another example of Class (L) but not (B):
$$
g(\mathbf{x})=g_0(\mathbf{x})\vee\left(\prod_{j=1}^k x_j^{\alpha/k}\right).
$$
where $g_0(\mathbf{x})>0$ is any generalized Hermite kernel of Class (B) on $\mathbb{R}_+^k$ with homogeneity exponent $\alpha$.
\end{Eg}

\subsection{Fractionally filtered kernels}\label{Subsec:frac}
According to Theorem \ref{Thm:is H-sssi}, the generalized Hermite process introduced above admits a Hurst coefficient $H>1/2$ only.  To obtain an $H$-sssi process with $0<H<1/2$, we consider the following fractionally filtered  kernel:
\begin{equation}\label{eq:h^beta_t}
h^\beta_t(\mathbf{x})=
\int_{\mathbb{R}}l_t^\beta(s) g(s\mathbf{1}-\mathbf{x})\mathrm{1}_{\{s\mathbf{1}>\mathbf{x}\}} ds,
\end{equation}
where $g$   is a generalized Hermite kernel defined in Definition \ref{Def:GHK} with homogeneity exponent $$
\alpha\in (-k/2-1/2,-k/2),
$$
and
\begin{equation}\label{eq:l^beta_t}
l^{\beta}_t(s)=\frac{1}{\beta}\left[(t-s)_+^\beta- (-s)_+^\beta\right],~\beta\neq 0.
\end{equation}
One can extend it to $\beta=0$ by writing $l^{0}_t(s)=\mathrm{1}_{(0,t]}(s)$, but this would lead us back to the generalized Hermite process case. We hence assume throughout that $\beta\neq 0$.
The following proposition gives the range of $\beta$ for which $I_k(h_t^\beta)$ is well-defined.
\begin{Pro}\label{Pro:beta range}
If
\begin{equation}\label{eq:beta range}
-1<-\alpha-\frac{k}{2}-1<\beta<-\alpha-\frac{k}{2}<\frac{1}{2}, \quad \beta\neq 0
\end{equation}
then $h^\beta_t \in L^2(\mathbb{R}^k)$.
\end{Pro}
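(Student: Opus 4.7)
My plan is to extend the $L^2$-norm calculation in the proof of Theorem \ref{Thm:is H-sssi} by replacing the indicator $\mathrm{1}_{[0,t]}(s)$ by $l_t^\beta(s)$, and then reducing the resulting two-dimensional integral in the $s$-variables to a one-dimensional Fourier integral via Parseval. Concretely, I expand
\begin{equation*}
\|h_t^\beta\|_{L^2(\mathbb{R}^k)}^2 = \int_{\mathbb{R}}\!\int_{\mathbb{R}} l_t^\beta(s_1)\, l_t^\beta(s_2) \left[\int_{\mathbb{R}^k} g(s_1\mathbf{1}-\mathbf{x})\, g(s_2\mathbf{1}-\mathbf{x})\, \mathrm{1}_{\{s_1\mathbf{1}>\mathbf{x}\}}\mathrm{1}_{\{s_2\mathbf{1}>\mathbf{x}\}}\, d\mathbf{x}\right]\! ds_1 ds_2,
\end{equation*}
and then, for fixed $s_1<s_2$, perform the substitutions $\mathbf{w}=s_1\mathbf{1}-\mathbf{x}$ and $\mathbf{w}=(s_2-s_1)\mathbf{y}$ exactly as in the proof of Theorem \ref{Thm:is H-sssi}. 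Using Condition A of Definition \ref{Def:GHK}, the bracket evaluates to $C_g\,|s_1-s_2|^{2\alpha+k}$, with $C_g=\int_{\mathbb{R}_+^k}g(\mathbf{y})g(\mathbf{1}+\mathbf{y})\,d\mathbf{y}$, which is finite by Condition B.

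This reduces the problem to showing
\begin{equation*}
\int_{\mathbb{R}}\int_{\mathbb{R}} l_t^\beta(s_1)\, l_t^\beta(s_2)\, |s_1-s_2|^{-\delta}\, ds_1\, ds_2 < \infty, \qquad \delta:=-(2\alpha+k)\in(0,1).
\end{equation*}
Setting $f=l_t^\beta$ and using Parseval together with the distributional identity $\widehat{|s|^{-\delta}}(u)=c_\delta |u|^{\delta-1}$ for $\delta\in(0,1)$, this integral equals a constant times $\int_{\mathbb{R}}|\widehat{l_t^\beta}(u)|^2 |u|^{\delta-1}\,du$. The Fourier transform $\widehat{l_t^\beta}(u)=c\,\tfrac{e^{itu}-1}{iu}|u|^{-\beta}$ (up to a unimodular phase, a standard formula for the moving-average fractional Brownian motion kernel) gives $|\widehat{l_t^\beta}(u)|^2 = 2c^2\,(1-\cos(tu))/(u^2|u|^{2\beta})$, so the problem reduces to finiteness of
\begin{equation*}
\int_{\mathbb{R}} \frac{1-\cos(tu)}{|u|^{3+2\beta-\delta}}\, du.
\end{equation*}
Near $u=0$, $1-\cos(tu)\sim t^2u^2/2$ forces the exponent $3+2\beta-\delta-2<1$, i.e.\ $\beta<\delta/2=-\alpha-k/2$; near $u=\infty$, boundedness of $1-\cos(tu)$ forces $3+2\beta-\delta>1$, i.e.\ $\beta>-\alpha-k/2-1$. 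These are exactly the bounds in (\ref{eq:beta range}).

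The main obstacle is the Fubini interchange underlying the first paragraph: when $\beta<0$, $l_t^\beta$ changes sign on $(-\infty,t)$, so I need an absolute version of the estimate. I handle this by bounding $|l_t^\beta(s)|$ piecewise via elementary estimates that reflect the known behavior of fractional moving-average kernels: $|l_t^\beta(s)|\le C(t-s)^{\beta}$ on $(0,t)$ when $\beta<0$ (and bounded there when $\beta>0$), and $|l_t^\beta(s)|\le C\,t(1+|s|)^{\beta-1}$ as $s\to -\infty$ by a mean-value argument on $(-s)^\beta-(t-s)^\beta$. Substituting these bounds (and using $\int|g(\mathbf{y})g(\mathbf{1}+\mathbf{y})|\,d\mathbf{y}<\infty$), the absolute two-variable integral reduces to a finite sum of Beta-function type integrals whose finiteness is controlled precisely by the same exponent inequalities verified in the Fourier calculation above. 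Once absolute integrability is in hand, Fubini and Parseval are both legitimate and the proof is complete.
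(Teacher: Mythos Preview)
Your reduction to the two-variable integral $\iint l_t^\beta(s_1)l_t^\beta(s_2)|s_1-s_2|^{2\alpha+k}\,ds_1ds_2$ after integrating out $\mathbf{x}$ is exactly the paper's first step. From there the paper proceeds differently and more directly: it evaluates the inner integral $\int_0^\infty l_t^\beta(s+u)\,u^{2\alpha+k}\,du$ in closed form via the Beta identity, obtaining a constant multiple of $(t-s)_+^{\beta+\delta}-(-s)_+^{\beta+\delta}$ with $\delta=2\alpha+k+1$, and then checks integrability of the resulting one-variable product $((t-s)_+^\beta-(-s)_+^\beta)((t-s)_+^{\beta+\delta}-(-s)_+^{\beta+\delta})$ near $s=-\infty,0,t$. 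That yields the exponent conditions immediately.

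Your Parseval/Fourier route is an elegant way to \emph{see} the same conditions, but be aware that it is logically redundant in your own argument: once your piecewise bounds show that the \emph{absolute} double integral $\iint |l_t^\beta(s_1)||l_t^\beta(s_2)||s_1-s_2|^{2\alpha+k}\,ds_1ds_2$ is finite, you already have $\|h_t^\beta\|_{L^2}^2\le C_{|g|}\cdot(\text{that integral})<\infty$, and nothing is left for Parseval to do. This is not merely cosmetic, because the Parseval identity itself is delicate here: the admissible range allows $\beta\le -1/2$ (since the lower endpoint $-\alpha-k/2-1$ lies in $(-1,-1/2)$), and then $l_t^\beta\notin L^2(\mathbb{R})$ owing to the $(-s)^\beta$ singularity at $s=0^-$, so $\widehat{l_t^\beta}$ and the Parseval step would need a distributional justification that you do not get for free from Fubini.

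One small gap in your piecewise bounds: for $\beta<0$ you list $|l_t^\beta(s)|\le C(t-s)^\beta$ on $(0,t)$ and the tail bound $\le Ct(1+|s|)^{\beta-1}$ at $-\infty$, but omit the companion singularity $|l_t^\beta(s)|\sim|\beta|^{-1}(-s)^\beta$ as $s\to 0^-$, which is of the same strength as the one at $t^-$. Include it; the critical piecewise cases are then ``two singularities meeting'' (forcing $2\beta+2>-(2\alpha+k)$, i.e.\ $\beta>-\alpha-k/2-1$) and ``two tails meeting'' (forcing $2\beta< -(2\alpha+k)$, i.e.\ $\beta<-\alpha-k/2$), exactly as required.
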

\begin{proof}
\begin{align}
\int_{\mathbb{R}^k} h_t^\beta (\mathbf{x})^2 d\mathbf{x}\le&
2\int_{-\infty}^\infty ds_1 \int_{s_1}^\infty ds_2 \int_{\mathbb{R}^k} d\mathbf{x}~ l_t(s_1)l_t(s_2)|g(s_1\mathbf{1}-\mathbf{x})g(s_2\mathbf{1}-\mathbf{x})|\mathrm{1}_{\{ s_1\mathbf{1}>\mathbf{x}\}} \label{eq:int h^beta_t}
 \\
=&2\int_{-\infty}^\infty ds \int_{0}^\infty du \int_{\mathbb{R}_+^k}d\mathbf{w}~
l^{\beta}_t(s)  l^{\beta}_t(s+u) |g(\mathbf{w})g(u\mathbf{1}+\mathbf{w})|  \qquad\qquad (s=s_1, ~u=s_2-s_1,~\mathbf{w}=s_1\mathbf{1}-\mathbf{x})\notag\\
=&2\int_{-\infty}^\infty ds ~l^{\beta}_t(s) \int_{0}^\infty   l^{\beta}_t(s+u) u^{2\alpha+k}du~\int_{\mathbb{R}_+^k}d\mathbf{y}~
 |g(\mathbf{y})g(\mathbf{1}+\mathbf{y})|\notag.
\end{align}
We thus focus on showing $\int_{-\infty}^\infty ds ~l^{\beta}_t(s) \int_{0}^\infty   l^{\beta}_t(s+u) u^{2\alpha+k}du<\infty$. Recall that for any $c>0$, we have
\begin{align*}
\int_0^c (c-s)^{\gamma_1} s^{\gamma_2}ds= c^{\gamma_1+\gamma_2+1}\int_0^1 (1-s)^{\gamma_1}s^{\gamma_2}ds =c^{\gamma_1+\gamma_2+1}\mathrm{B}(\gamma_1+1,\gamma_2+1),\quad \forall\gamma_1,\gamma_2>-1.
\end{align*}
So by noting that $\beta>-1$ and $2\alpha+k>-1$, we have
\begin{align*}
\int_{0}^\infty   l^{\beta}_t(s+u) u^{2\alpha+k}du&=\frac{1}{\beta}\int_{0}^\infty \left[(t-s-u)_+^\beta -(-s-u)_+^\beta\right]u^{2\alpha+k} du\\
&= \frac{1}{\beta}\left[\int_0^{t-s}(t-s-u)^\beta u^{2\alpha+k}du+\int_0^{-s} (-s-u)^\beta u^{2\alpha+k}du\right]
\\&=
\frac{\mathrm{B}(\beta+1,2\alpha+k+1)}{\beta}\left[(t-s)_+^{\beta+\delta}- (-s)_+^{\beta+\delta}\right],
\end{align*}
where
\begin{equation}\label{eq:delta}
\delta =2\alpha+k+1\in (0,1).
\end{equation}
 We thus want to determine when the following holds:
\begin{align*}
\int_\mathbb{R} \left((t-s)_+^\beta - (-s)_+^\beta \right) \left((t-s)_+^{\beta+\delta} - (-s)_+^{\beta+\delta} \right)ds<\infty.
\end{align*}
Suppose $t>0$.
The potential integrability problems appear near $s=-\infty,0,t$. Near $s=-\infty$, the integrand behaves like $|s|^{2\beta+\delta-2}$, and thus we need $2\beta+\delta-2<-1$; near $s=0$, the integrand behaves like $|s|^{2\beta+\delta}$, and thus $2\beta+\delta>-1$; near $s=t$, the integrand behaves like $|t-s|^{2\beta+\delta}$, and thus again $2\beta+\delta>-1$. In view of (\ref{eq:delta}),  these requirements are satisfied by (\ref{eq:beta range}).
\end{proof}
\begin{Rem}
Using (\ref{eq:int h^beta_t}) we obtain as a byproduct of the preceding proof  that if $\beta$ is in the range given in Proposition \ref{Pro:beta range}, then the function $f_{\mathbf{x},t}(s):=l_t(s) |g(s\mathbf{1}-\mathbf{x})|\mathrm{1}_{\{s\mathbf{1}>\mathbf{x}\}}$ is in $L^1(\mathbb{R})$ for any $t>0$ and a.e.\ $\mathbf{x}\in \mathbb{R}^k$.
\end{Rem}
\begin{Thm}\label{Thm:Frac Z beta hsssi}
The process defined by $Z^\beta(t):=I_k(h_t^\beta)$ with $h_t^\beta$ given in (\ref{eq:h^beta_t}), namely,
\begin{equation}\label{eq:frac filt proc full}
Z^\beta(t)= \int_{\mathbb{R}^k}' \int_{\mathbb{R}}\frac{1}{\beta} [(t-s)_+^\beta-(-s)_+^\beta] g(s-x_1,\ldots,s-x_k)1_{\{s>x_1,\ldots,s>x_k\}}  ds~ W(dx_1)\ldots W(dx_k),
\end{equation}
 is an $H$-sssi process with
$$H=\alpha+\beta+k/2+1 \in (0,1).
$$
\end{Thm}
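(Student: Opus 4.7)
The plan is to invoke Proposition \ref{Pro:Construct H-sssi} by verifying its three conditions for the family $\{h_t^\beta,\ t>0\}$ defined in (\ref{eq:h^beta_t}). Condition \ref{SSSI:L2} ($L^2$-membership) is already established by Proposition \ref{Pro:beta range}, so the work reduces to checking self-similarity and stationary increments and then reading off the Hurst exponent. The claim $H\in(0,1)$ will be immediate from the admissible range (\ref{eq:beta range}): adding $\alpha$ to the strict inequalities on $\beta$ gives $\alpha+\beta\in(-k/2-1,-k/2)$, hence $H=\alpha+\beta+k/2+1\in(0,1)$.

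For self-similarity I would verify Condition \ref{SSSI:homo} with $\beta$ there equal to $-1$. In $h_{\lambda t}^\beta(\mathbf{x})$ I substitute $s=\lambda r$, use the elementary scaling identity $l_{\lambda t}^\beta(\lambda r)=\lambda^\beta l_t^\beta(r)$ (a direct consequence of (\ref{eq:l^beta_t}) since $\lambda>0$), and the homogeneity $g(\lambda r\mathbf{1}-\mathbf{x})=\lambda^\alpha g(r\mathbf{1}-\lambda^{-1}\mathbf{x})$ from Condition \ref{ass:homo} of Definition \ref{Def:GHK}. These combine to give
\[
h_{\lambda t}^\beta(\mathbf{x})=\lambda^{\alpha+\beta+1}\,h_t^\beta(\lambda^{-1}\mathbf{x}),
\]
and matching the exponent to the required $\lambda^{H-k/2}$ in Proposition \ref{Pro:Construct H-sssi} produces $H=\alpha+\beta+k/2+1$.

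For stationary increments I would verify Condition \ref{SSSI:stationary} with $\mathbf{a}=-\mathbf{1}$. The crucial observation is the translation identity
\[
l_{t+r}^\beta(s)-l_t^\beta(s)=\tfrac{1}{\beta}\bigl[(t+r-s)_+^\beta-(t-s)_+^\beta\bigr]=l_r^\beta(s-t),
\]
which is exactly the point of subtracting $(-s)_+^\beta$ in the definition (\ref{eq:l^beta_t}): that term cancels, leaving a pure shift in $s$. Substituting this into $h_{t+r}^\beta(\mathbf{x})-h_t^\beta(\mathbf{x})$ and then changing variables $u=s-t$ rewrites the indicator as $\mathrm{1}_{\{u\mathbf{1}>\mathbf{x}-t\mathbf{1}\}}$ and the integrand as $g(u\mathbf{1}-(\mathbf{x}-t\mathbf{1}))$, yielding $h_{t+r}^\beta(\mathbf{x})-h_t^\beta(\mathbf{x})=h_r^\beta(\mathbf{x}-t\mathbf{1})$ as required.

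I do not expect any genuine obstacle here: all manipulations are Lebesgue-level, and the absolute integrability needed to apply Fubini and the change of variables is already a byproduct of the proof of Proposition \ref{Pro:beta range} (cf.\ the remark following it, which ensures the integrand defining $h_t^\beta$ is in $L^1$ for a.e.\ $\mathbf{x}$). The only mildly delicate ingredient is the translation identity for $l_t^\beta$; once it is in hand, self-similarity and stationary increments fall out in parallel with the unfiltered proof of Theorem \ref{Thm:is H-sssi}.
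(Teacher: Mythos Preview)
Your proposal is correct and follows essentially the same route as the paper: verify the scaling identity $l^\beta_{\lambda t}(s)=\lambda^\beta l^\beta_t(s/\lambda)$ and the translation identity $l^\beta_{t+r}(s)-l^\beta_t(s)=l^\beta_r(s-t)$, combine with the homogeneity of $g$, and invoke Proposition \ref{Pro:Construct H-sssi} (with the scaling parameter there set to $-1$). The paper's proof is terser but the argument is the same, and your reading of $H\in(0,1)$ from (\ref{eq:beta range}) is exactly right.
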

\begin{proof}
By (\ref{eq:l^beta_t}), one has for any $\lambda>0$,
$
l^\beta_{\lambda t}(s)=\lambda^{\beta} l^\beta_{t}(\frac{s}{\lambda}),
$
and for any $t,h>0$,
$
l^\beta_{t+h}(s)-l^\beta_{t}(s)=l^\beta_h(s-t).
$
In addition, $g$ is homogeneous with exponent $\alpha$. The conclusion then follows by Proposition \ref{Pro:Construct H-sssi}.
\end{proof}
\begin{Rem}
In the case $\beta>0$, one is able to write $l^\beta_t(s)=\int_0^t (r-s)_+^{\beta-1}dr$, and thus by Fubini
\begin{equation}\label{eq:h_t beta>0}
h_t^\beta (\mathbf{x})=\int_0^tdr \int_{\mathbb{R}} ds  (r-s)_+^{\beta-1}g(s\mathbf{1}-\mathbf{x})\mathrm{1}_{\{s\mathbf{1}>\mathbf{x}\}}.
\end{equation}
\end{Rem}
\begin{Rem}
To get the anti-persistent case $H<1/2$, choose
$$\beta\in(-\alpha-k/2-1,-\alpha-k/2-1/2).
$$
\end{Rem}

We now state an analog of (\ref{eq:general spec}) for the spectral representation of the process $Z^\beta(t)$:
\begin{Pro}\label{Pro:ComputeFT frac}
Suppose that (\ref{ass:int 1}) holds.
Then the $L^2$-sense Fourier transform of $h_t^\beta$ is
\begin{equation}\label{eq:hat h_t^beta}
\widehat{h}_{t}^\beta(\mathbf{u}) = (e^{it\langle\mathbf{u},\mathbf{1} \rangle}-1)(i\langle\mathbf{u},\mathbf{1} \rangle)^{-\beta-1} \widehat{g}(-\mathbf{u}) \Gamma(\beta),~a.e.~\mathbf{u}\in \mathbb{R}^k,
\end{equation}
where $\widehat{g}$ is defined in Proposition \ref{Pro:ComputeFT}.
\end{Pro}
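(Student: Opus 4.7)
The approach parallels the proof of Proposition \ref{Pro:ComputeFT}, but with an extra truncation in the $s$-variable so that Fubini applies pointwise. Set $g_n:=g\mathbf{1}_{(0,n]^k}$, $l_{t,N}^\beta(s):=l_t^\beta(s)\mathbf{1}_{[-N,t]}(s)$, and
\begin{equation*}
h_{t,n,N}^\beta(\mathbf{x}):=\int_{\mathbb{R}}l_{t,N}^\beta(s)\,g_n(s\mathbf{1}-\mathbf{x})\,\mathbf{1}_{\{s\mathbf{1}>\mathbf{x}\}}\,ds.
\end{equation*}
Using $|g_n|\le|g|$ and $|l_{t,N}^\beta|\le|l_t^\beta|$, the estimate from the proof of Proposition \ref{Pro:beta range} bounds $\|h_{t,n,N}^\beta\|_{L^2(\mathbb{R}^k)}$ uniformly and, via two applications of Dominated Convergence in $L^2$ (first $N\to\infty$ with $n$ fixed, then $n\to\infty$), yields $h_{t,n,N}^\beta\to h_t^\beta$ in $L^2(\mathbb{R}^k)$. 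Plancherel then gives $\widehat{h}_{t,n,N}^\beta\to\widehat{h}_t^\beta$ in $L^2$.

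The virtue of the double truncation is that $h_{t,n,N}^\beta$ is compactly supported in $\mathbf{x}$ (the conditions $s\in[-N,t]$ and $s-n\le x_j<s$ force $x_j\in[-N-n,t]$), hence in $L^1(\mathbb{R}^k)$. Moreover $g_n\in L^1(\mathbb{R}^k)$ by (\ref{ass:int 1}) and homogeneity, and $l_{t,N}^\beta\in L^1(\mathbb{R})$ since $[-N,t]$ is bounded and the only singularities at $s=0,t$ are of integrable order $|s|^\beta$, $|t-s|^\beta$ (as $\beta>-1$). Thus Fubini applies and, carrying out the same change of variables $\mathbf{y}=s\mathbf{1}-\mathbf{x}$ as in (\ref{eq:compute fourier h_tn}), one obtains pointwise in $\mathbf{u}$
\begin{equation*}
\widehat{h}_{t,n,N}^\beta(\mathbf{u})=\widehat{l_{t,N}^\beta}(\langle\mathbf{u},\mathbf{1}\rangle)\,\widehat{g}_n(-\mathbf{u}),
\end{equation*}
with both factors interpreted as ordinary $L^1$-Fourier transforms.

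The main obstacle is identifying the limit of $\widehat{l_{t,N}^\beta}(u)$ as $N\to\infty$; this is the only step that goes genuinely beyond Proposition \ref{Pro:ComputeFT}. For $\beta\in(-1,0)$, splitting $l_{t,N}^\beta$ into its two defining terms and substituting $v=t-s$ and $v=-s$ gives
\begin{equation*}
\widehat{l_{t,N}^\beta}(u)=\frac{1}{\beta}\left[e^{iut}\int_0^{t+N}v^\beta e^{-iuv}\,dv-\int_0^N v^\beta e^{-iuv}\,dv\right].
\end{equation*}
Each integral converges as $N\to\infty$ (Dirichlet's test, valid because $v^\beta$ is monotone decreasing to $0$ for $\beta\in(-1,0)$) to the improper integral $\Gamma(\beta+1)(iu)^{-\beta-1}$; combining yields the desired formula via $\Gamma(\beta+1)=\beta\Gamma(\beta)$. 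The cancellation between the two terms is essential, since individually neither $(t-s)^\beta$ nor $(-s)_+^\beta$ is integrable at $-\infty$. For $\beta\in(0,1/2)$, one instead notes that $l_t^\beta\in L^2(\mathbb{R})$, so $\widehat{l_{t,N}^\beta}\to\widehat{l_t^\beta}$ in $L^2$ by Plancherel; the representation $l_t^\beta=\Gamma(\beta)\,I_-^\beta\mathbf{1}_{[0,t]}$ from (\ref{eq:h_t beta>0}), combined with the standard Fourier symbol $(iu)^{-\beta}$ of the right-sided Riemann--Liouville operator $I_-^\beta$ and $\widehat{\mathbf{1}_{[0,t]}}(u)=(e^{iut}-1)/(iu)$, yields the same expression.

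Finally, to assemble, pass $N\to\infty$ in the product formula with $n$ fixed: the left-hand side tends to $\widehat{h}_{t,n}^\beta$ in $L^2$, while the right-hand side converges a.e.\ (along a subsequence in the $\beta>0$ case) to $(e^{it\langle\mathbf{u},\mathbf{1}\rangle}-1)(i\langle\mathbf{u},\mathbf{1}\rangle)^{-\beta-1}\Gamma(\beta)\,\widehat{g}_n(-\mathbf{u})$; extracting an a.e.-convergent subsequence of the $L^2$-convergent sequence identifies the two limits a.e. Letting $n\to\infty$, the $L^2$-convergence $\widehat{h}_{t,n}^\beta\to\widehat{h}_t^\beta$ together with the a.e.\ convergence $\widehat{g}_n\to\widehat{g}$ from Proposition \ref{Pro:ComputeFT} (along a subsequence) delivers (\ref{eq:hat h_t^beta}) a.e.
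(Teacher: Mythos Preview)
Your proof is correct and follows essentially the same strategy as the paper: truncate both $g$ and $l_t^\beta$ to obtain $L^1$-Fourier transforms, factor via the change of variables $\mathbf{y}=s\mathbf{1}-\mathbf{x}$, identify the limit of $\widehat{l_{t,N}^\beta}$ separately for $\beta<0$ and $\beta>0$, and then pass to the limit along subsequences. The only cosmetic differences are that the paper uses a single truncation index (the same $n$ for both $g$ and $l_t^\beta$) rather than your double index $(n,N)$, and that for $\beta>0$ the paper phrases the computation as a truncated convolution $\mathbf{1}_{[0,t)}*b_n$ rather than invoking the Riemann--Liouville symbol directly---but these are the same calculation.
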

\begin{proof}
Let  $g_n(\mathbf{x})=g(\mathbf{x})\mathrm{1}_{(0,n]^k}(\mathbf{x})$, and $l_{t,n}^\beta=\beta^{-1}[(t-s)_+^\beta \mathrm{1}_{\{t-s<n\}}-(-s)_+^\beta \mathrm{1}_{\{-s<n\}}].$ Set 
$$h_{t,n}^\beta(\mathbf{x}) = \int_{\mathbb{R}}l_{t,n}(s)g_n(s\mathbf{1}-\mathbf{x})ds.$$
Similar to the proof of Proposition \ref{Pro:ComputeFT}, one can  show that $h_{t,n}^\beta $ converges in $L^2(\mathbb{R}^k)$ to $h_{t}^\beta $ as $n\rightarrow\infty$ through the Dominated Convergence Theorem by noting that  $|g_n|\le |g|$ and $|l_{t,n}^\beta|\le l_{t}^\beta$.

Since the truncated $l_{t,n}$ and ${g}_n$ admit $L^1$-Fourier transforms $\widehat{l}_{t,n}$  and    $\widehat{g}_n$ respectively,  one can   write the Fourier transform of $h_{t,n}^\beta$ as:
\begin{align*}
\widehat{h}_{t,n}^\beta(\mathbf{u}) = \widehat{l}_{t,n}(\langle\mathbf{u},\mathbf{1}\rangle) \widehat{g}_n(-\mathbf{u}),
\end{align*}
(compare with (\ref{eq:compute fourier h_tn})). Since $h_{t,n}^\beta$ converges in $L^2(\mathbb{R})$ to $h_t^\beta$ as $n\rightarrow\infty$, by Plancherel's isometry, $\widehat{h}^\beta_{t,n}$ converges in $L^2(\mathbb{R}^k)$ to  $\widehat{h}^\beta_t$. One now needs to identify (\ref{eq:hat h_t^beta}) with the limit of $\widehat{h}^\beta_{t,n}$.

We first compute $\widehat{l}^\beta_{t,n}$. When $\beta<0$, one has  by change of variable that
\begin{align}
l^{\beta}_{t,n}(u)&=\beta ^{-1}\left(\int_\mathbb{R} e^{iux} (t-x)_+^\beta \mathrm{1}_{\{t-x<n\}}dx-\int_{\mathbb{R}}e^{iux}(-x)_+^\beta \mathrm{1}_{\{-x<n\}} dx\right)\notag\\
&= \beta^{-1}(e^{iut}-1)\int_{0}^n e^{-ius} s^\beta ds.\label{eq:hat l beta<0}
\end{align}
When $\beta>0$, one has
\begin{align*}
 l^{\beta}_{t,n}(u)&=\int_\mathbb{R}\mathrm{1}_{[0,t)}(x)(x-u)_+^{\beta-1}\mathrm{1}_{\{x-u<n\}}dx=(\mathrm{1}_{[0,t)} * b_n)(u),
\end{align*}
where $b_n(x)=(-x)_+^{\beta-1}\mathrm{1}_{\{-x<n\}}$. We have the Fourier transforms $\widehat{1_{[0,t)}}(u)=\frac{e^{iut}-1}{iu}$, and
\[
\widehat{b}_n(u)=\int_{\mathbb{R}}e^{-iux} (-x)_+^{\beta-1}\mathrm{1}_{\{-x<n\}}dx
=\int_0^n e^{-ius}s^{\beta-1}ds.
\]
So
\begin{equation}\label{eq:hat l beta>0}
\widehat{l}^\beta_{t,n}(u)=\frac{e^{iut}-1}{iu}\int_0^n e^{-ius}s^{\beta-1}ds
\end{equation}
By  \citet{gradshteyn:2007:table} Formula 3.761.4 and 3.761.9, for $\mu\in (0,1)$,
\begin{align*}
\lim_{n\rightarrow\infty}\int_0^n e^{-ius}s^{\mu-1}ds& = |u|^{-\mu}\Gamma(\mu)\cos(\frac{\mu \pi}{2})-i\mathrm{sign}(u)|u|^{-\mu}\Gamma(\mu)\sin(\frac{\mu\pi}{2})\\
&=e^{-i\mathrm{sign}(u)\mu\pi/2} |u|^{-\mu}\Gamma(\mu)=(iu)^{-\mu}\Gamma(\mu),
\end{align*}
Combining the foregoing limit with (\ref{eq:hat l beta<0}) and (\ref{eq:hat l beta>0}), we deduce
\[
\lim_{n\rightarrow\infty}\widehat{l}^\beta_{t,n}=\widehat{l}^\beta_{t}(u) :=(e^{itu}-1) (iu)^{-\beta-1} \Gamma(\beta).
\]
Recall that there exists a subsequence  $\widehat{g}_{n_k}$ converges a.e.\ to the pseudo-Fourier transform  $\widehat{g}$ as $k\rightarrow\infty$ (Proposition \ref{Pro:ComputeFT}). So $\widehat{l}_{t,n_k}(\langle\mathbf{u},\mathbf{1}\rangle) \widehat{g}_{n_k}(-\mathbf{u})$ converges to $\widehat{l}_{t}(\langle\mathbf{u},\mathbf{1}\rangle) \widehat{g}(-\mathbf{u})$ for a.e.\ $\mathbf{u}\in \mathbb{R}^k$. But at the same time $\widehat{l}_{t,n_k}(\langle\mathbf{u},\mathbf{1}\rangle) \widehat{g}_{n_k}(-\mathbf{u})$ converges in  $L^2(\mathbb{R})^k$ to $\widehat{h}_{t}^\beta$. So we identify $\widehat{h}_t^\beta$ with the expression in (\ref{eq:hat h_t^beta})
\end{proof}

\begin{Rem}
By Proposition \ref{Pro:Time<->Spec}, we get a spectral representation $Z^\beta(t)\overset{f.d.d.}{=}\widehat{I}(\widehat{h}^\beta_{t})$.
The kernel (\ref{eq:hat h_t^beta}) in the spectral-domain has been considered by \citet{major:1981:limit} in the special case where $\widehat{g}(\mathbf{u})=c\prod_{j=1}^k |u_j|^{-d}$ is the kernel for the spectral representation of Hermite process.
\end{Rem}

\section{Discrete chaos processes}\label{Sec:poly process}
In this section, we introduce a class of stationary sequence which converges to a  generalized Hermite process of Class (L) as defined in Definition \ref{Def:class limit}.

First we define the \emph{discrete chaos}, or the \emph{discrete multiple stochastic integral},  $Q_k(\cdot ;\mathbd{\epsilon})$ with respect to the i.i.d.\ noise $\mathbd{\epsilon}:=(\epsilon_i,i\in \mathbb{Z})$.

Let $h$ be a function defined in $\mathbb{Z}^k$ such that $\sum'_{\mathbf{i}\in \mathbb{Z}^k} h(\mathbf{i})^2<\infty$, where $'$ indicate the exclusion of the diagonals $i_p=i_q$, $p\neq q$. The following sum
\begin{align}\label{eq:Q_k(h)}
Q_k(h)=Q_k(h,\mathbd{\epsilon})=\sum'_{(i_1,\ldots,i_k)\in \mathbb{Z}^k} h(i_1,\ldots,i_k) \epsilon_{i_1}\ldots \epsilon_{i_k}=\sum'_{\mathbf{i}\in \mathbb{Z}^k} h(\mathbf{i})\prod_{p=1}^k \epsilon_{i_p},
\end{align}
is called the \emph{discrete chaos} of order $k$. It is easy to see that switching the arguments, say $i_p$ and $i_q$, $p\neq q$, of $h(i_1,\ldots,i_k)$, does not change $Q_k(h)$. So if $\tilde{h}$ is the symmetrization $h$, then $Q_k(h)=Q_k(\tilde{h})$.

The discrete chaos is related to Wiener chaos by a limit theorem.
Suppose now   we have  a sequence of function vectors $\mathbf{h}_n=(h_{1,n},\ldots,h_{j,n})$ where each $h_{j,n}\in L^2(\mathbb{Z}^{k_j})$, $j=1,\ldots,J$.   The following proposition concerns the convergence of the discrete chaos to the Wiener chaos:
\begin{Pro}\label{Pro:Poly->Wiener}
Let $\tilde{h}_{j,n}(\mathbf{x})=n^{k_j/2}h_{j,n}\left([n\mathbf{x}]+\mathbf{c}_j\right)$, $j=1,\ldots,J$, where  $\mathbf{c}_j\in \mathbb{Z}^k$. Suppose that there exists $h_j\in L^2(\mathbb{R}^{k_j})$, such that
$$
\|\tilde{h}_{j,n}-h_j\|_{L^2(\mathbb{R}^{k_j})}\rightarrow 0
$$
 as $n\rightarrow\infty$.
 Then, as $n\rightarrow\infty$,
\begin{align*}
\mathbf{Q}:=\Big(Q_{k_1}(h_{1,n}),\ldots,Q_{k_J}(h_{J,n})\Big) \ConvD \mathbf{I}:=\Big(I_{k_1}(h_1),\ldots,I_{k_J}(h_J)\Big),
\end{align*}
where each $I_{k_j}(\cdot)$, $j=1,\ldots,J$, denotes  the $k_j$-tuple Wiener-It\^o integral with respect to the same standard Brownian motion $W$.
\end{Pro}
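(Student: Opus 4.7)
The plan is the classical two-step strategy for passing from the $L^2$-isometric identities of $Q_k$ and $I_k$ to weak convergence: first prove joint convergence on a dense class of elementary kernels via the multivariate central limit theorem, then extend to arbitrary $L^2$-kernels using a uniform $L^2$-approximation. The two key quantitative inputs are the isometric inequality $\E Q_k(h)^2 \le k!\|h\|_{\ell^2(\mathbb{Z}^k)}^2$, derived from $\E\epsilon_i=0$, $\E\epsilon_i^2=1$ and independence in direct analogy with the Wiener case, and the identity $\|\tilde h_{j,n}\|_{L^2(\mathbb{R}^{k_j})} = \|h_{j,n}\|_{\ell^2(\mathbb{Z}^{k_j})}$, which is immediate from the scaling factor $n^{k_j/2}$ in the definition of $\tilde h_{j,n}$.

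By symmetrization I may assume each $h_j$ is symmetric. I would then approximate each $h_j$ in $L^2(\mathbb{R}^{k_j})$ by symmetric simple functions $h_j^{(m)}$ of the form $\sum_\ell a_\ell \sum_\sigma \prod_{p=1}^{k_j} \mathbf{1}_{A_{\sigma(p)}^{(\ell)}}$ with pairwise disjoint bounded rational-endpoint intervals $A_1^{(\ell)},\ldots,A_{k_j}^{(\ell)}$, a standard dense class. Defining the natural discretization $h_{j,n}^{(m)}(\mathbf i) := n^{-k_j/2} h_j^{(m)}((\mathbf i - \mathbf c_j)/n)$, once $n$ is large enough that the $1/n$-grid resolves all the endpoints, $Q_{k_j}(h_{j,n}^{(m)})$ becomes an exact polynomial in the partial sums
\[
S_n^{c}(A) := n^{-1/2} \sum_{i:\,(i-c)/n \in A} \epsilon_i
\]
over the finitely many intervals appearing across $h_1^{(m)},\ldots,h_J^{(m)}$; crucially, because the intervals within each elementary summand are pairwise disjoint, the indices $i_p$ are automatically distinct and the prime-restriction has no effect. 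The multivariate CLT (equivalently Donsker's invariance principle) yields joint convergence of the vector $(S_n^{c_j}(A))$ to independent Gaussians $(W(A))$ with $\E W(A)^2 = |A|$, the shifts $\mathbf c_j$ being asymptotically irrelevant. Applying the continuous mapping theorem and identifying the resulting polynomial in the $W(A)$ with $I_{k_j}(h_j^{(m)})$ by the very definition of the Wiener-It\^o integral on elementary functions, one gets $\mathbf Q_n^{(m)} \ConvD \mathbf I^{(m)}$ jointly for each fixed $m$.

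To finish, I would use the isometric bound
\[
\E|Q_{k_j}(h_{j,n}) - Q_{k_j}(h_{j,n}^{(m)})|^2 \le k_j!\, \|\tilde h_{j,n} - h_j^{(m)}\|_{L^2(\mathbb{R}^{k_j})}^2 \xrightarrow{n\to\infty} k_j!\, \|h_j - h_j^{(m)}\|_{L^2(\mathbb{R}^{k_j})}^2,
\]
which vanishes as $m\to\infty$, together with the analogous $L^2$ estimate on the Wiener side; a standard double-limit criterion (weak convergence at each fixed $m$ plus uniform $L^2$ closeness across $m$) delivers the claim. The main obstacle I anticipate is the bookkeeping of \emph{joint} convergence across chaos orders of different $k_j$: because the partial sums $S_n^{c_j}(A)$ for different $j$ involve distinct shifts and potentially distinct families of intervals, one needs a single multivariate CLT applied to the superposition of all the disjoint $\epsilon$-blocks, yielding the \emph{same} limiting Brownian motion $W$ for every coordinate. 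This is standard once the indexing is unified but deserves care since it is essential for producing a single $W$ in the limit $\mathbf I=(I_{k_1}(h_1),\ldots,I_{k_J}(h_J))$.
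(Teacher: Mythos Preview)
Your proposal is correct and follows essentially the same strategy the paper cites from Giraitis--Koul--Surgailis (Propositions 14.3.2--14.3.3): convergence for elementary off-diagonal kernels via a (multivariate) CLT on normalized block sums, then extension by $L^2$-approximation using the isometric bound; the paper handles the joint statement by Cram\'er--Wold, which is equivalent to your single multivariate CLT on the union of all blocks. One minor slip: the right-hand side of your displayed inequality should read $k_j!\,\|\tilde h_{j,n} - \tilde h_{j,n}^{(m)}\|_{L^2}^2$ rather than $k_j!\,\|\tilde h_{j,n} - h_j^{(m)}\|_{L^2}^2$, but since $\tilde h_{j,n}^{(m)}\to h_j^{(m)}$ in $L^2$ the stated $\limsup_n$ follows anyway by the triangle inequality.
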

For a proof, we refer the reader to the proof of Proposition 14.3.2 of \citet{giraitis:koul:surgailis:2009:large} on the univariate case. The proof for the multivariate case (corresponding to Proposition 14.3.3 of \citet{giraitis:koul:surgailis:2009:large}) is similar once the Cr\'amer-Wald Device is applied. The difference between Proposition \ref{Pro:Poly->Wiener} and  Proposition 14.3.3 of \citet{giraitis:koul:surgailis:2009:large} is that we  add the shift $\mathbf{c}_j$ for more flexibility. This extension requires only an easy modification to the proof.

The causal \emph{discrete chaos process}  of order $k\ge 1$ is a stationary sequence $\{X(n),n\in \mathbb{Z}\}$  defined by:
\begin{equation}\label{eq:Def PolyProcess}
X(n)=\sum'_{0<i_1,\ldots,i_k<\infty} a(i_1,\ldots,i_k)\epsilon_{n-i_1}\ldots \epsilon_{n-i_k}=\sum'_{-\infty<i_1,\ldots,i_k<n} a(n-i_1,\ldots,n-i_k)\epsilon_{i_1}\ldots \epsilon_{i_k},
\end{equation}
where $'$ indicates that the sum excludes the diagonals $i_p=i_q$, $p\neq q$, $\{\epsilon_n\}$ is an i.i.d.\ sequence with mean $0$ and variance $1$, $a(\mathbf{i})$ is a function on $\mathbb{Z}^k$, and we require that it  satisfies $\sum'_{\mathbf{i}>\mathbf{0}}  a(\mathbf{i})^2<\infty$, so that $X(n)$ is well-defined in the $L^2(\Omega)$-sense.
Note that when $k=1$, $X(n)$ is plainly a linear process.

Due to the off-diagonality, the autocovariance of $\{X(n)\}$ is given by the simple formula
\begin{equation}\label{eq:ACF}
\gamma(n):=\Cov(X(n),X(0))=k!\sum'_{\mathbf{i}>  \mathbf{0}}\tilde{a}(\mathbf{i})\tilde{a}(\mathbf{i}+|n|\mathbf{1}),
\end{equation}
 where $\tilde{a}(\cdot)$ is the symmetrization of $a(\cdot)$.

We now focus on the following case:
\begin{equation}\label{eq:a=gL}
a(\mathbf{i})=g(\mathbf{i})L(\mathbf{i}),
\end{equation}
where $g$ is a  generalized Hermite kernel of Class (L) defined in Definition \ref{Def:class limit}, and $L$ is a bounded function on $\mathbb{Z}_+^k$ which satisfies the following: for any $\mathbf{x}\in\mathbb{R}_+^k$ and for any bounded $\mathbb{Z}^k$-valued function $\mathbf{B}(\cdot)$ defined on $\mathbb{Z}_+$, we have
\begin{equation}\label{eq:L(ni)->1}
L([n\mathbf{x}]+\mathbf{B}(n))\rightarrow 1,~\text{as }n\rightarrow \infty.
\end{equation}
Note that $X(n)$ is well-defined in $L^2(\Omega)$ since $\sum_{\mathbf{i}\in \mathbb{Z}^k_+} g^*(\mathbf{i})^2<\infty$, where $g^*$ is a linear combination of terms of the form $\prod_{j=1}^k x_j^{\gamma_j}$ with every $\gamma_j<-1/2$,
\begin{Rem}
Note that the boundedness of $L$ and (\ref{eq:L(ni)->1}) are strictly weaker than assuming that $L(\mathbf{i})\rightarrow 1$ as $\|\mathbf{i}\|\rightarrow \infty$ for some norm $\|\cdot\|$ on $\mathbb{R}^k$ (recall that norms are equivalent in the finite-dimensional space). Indeed, consider
$$
L(i_1,i_2)=
\begin{cases}
2 & \text{ if } i_2=1;\\
1   & \text{ otherwise}.
\end{cases}
$$
Suppose that $\mathbf{B}$ is bounded by $M$.
Then $L([n\mathbf{x}]+\mathbf{B}(n))=1$ for large $n$. On the other hand, consider $\|\mathbf{i}\|=\max(i_1,i_2)$. Then if $(i_1,i_2)=(i_1,1)$, $i_1\rightarrow \infty$, we have $\|\mathbf{i}\|=i_1\rightarrow\infty$ but $L(i_1,i_2)=L(i_1,1)=2$.
\end{Rem}
\begin{Rem}
In practice, Relation
(\ref{eq:L(ni)->1}) implies that for any fixed $\mathbf{x}\in \mathbb{R}_+^k$ and $\mathbf{c}\in \mathbb{Z}_+^k$, $L([n\mathbf{x}]+\mathbf{c})\rightarrow 1$ as $n\rightarrow \infty$.
\end{Rem}

The following Proposition shows that one can get long-range dependence if $g$ is of Class (L).
\begin{Pro}\label{Pro:LRD ACF}
If $a(\mathbf{i})$ is as given in (\ref{eq:a=gL}), where  $g$ has homogeneity exponent $\alpha\in (-1/2-k/2,-k/2)$ (or $2\alpha+k\in (-1,0)$), then the autocovariance of the  discrete chaos process $\{X(n)\}$ satisfies
\begin{equation}\label{eq:ACF asymp}
\gamma(n)\sim k! C_{\tilde{g}} n^{2H-2}, \text{ as }n\rightarrow\infty,
\end{equation}
where $C_{\tilde{g}}=\int_{\mathbb{R}_+^k}\tilde{g}(\mathbf{x})\tilde{g}(\mathbf{1}+\mathbf{x})>0$, $H=\alpha+k/2+1\in(1/2,1)$, with $\tilde{g}$ being the symmetrization of $g$. In addition, as $N\rightarrow\infty$,
\begin{equation}\label{eq:Var asymp}
\Var[\sum_{n=1}^NX(n)] \sim \frac{k!C_{\tilde{g}}}{H(2H-1)} N^{2H}.
\end{equation}
\end{Pro}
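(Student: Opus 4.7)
The plan is to prove the autocovariance asymptotics by a Riemann-sum approximation, and then derive the variance asymptotic from it by a standard summation-by-parts argument.

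For the first claim, I start from the formula (\ref{eq:ACF}), namely
$\gamma(n) = k! \sum'_{\mathbf{i}>\mathbf{0}} \tilde{a}(\mathbf{i}) \tilde{a}(\mathbf{i}+n\mathbf{1})$,
and divide by $n^{2H-2} = n^{2\alpha+k}$ to rewrite the right-hand side as
$$
\frac{\gamma(n)}{n^{2H-2}} = k! \sum'_{\mathbf{i}>\mathbf{0}} \frac{1}{n^k} \cdot n^{-2\alpha}\tilde{a}(\mathbf{i}) \tilde{a}(\mathbf{i}+n\mathbf{1}).
$$
Setting $\mathbf{x} = \mathbf{i}/n$, the factor $n^{-k}$ is the volume of the lattice cell, so this is a Riemann sum for $\int_{\mathbb{R}_+^k} \tilde{g}(\mathbf{x})\tilde{g}(\mathbf{x}+\mathbf{1})\,d\mathbf{x}$. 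The pointwise convergence of the summand comes from two ingredients: the homogeneity of $g$ (Condition A of Definition \ref{Def:GHK}) gives $n^{-\alpha}g([n\mathbf{x}]) \to g(\mathbf{x})$ and $n^{-\alpha}g([n\mathbf{x}]+n\mathbf{1}) \to g(\mathbf{x}+\mathbf{1})$ at continuity points; the hypothesis (\ref{eq:L(ni)->1}) gives $L([n\mathbf{x}])\to 1$ and $L([n\mathbf{x}]+n\mathbf{1})\to 1$. Since $\tilde{a}$ is a convex combination of permutations of $a = gL$ and the same conclusions hold for each permuted argument, the pointwise convergence of the symmetrized product to $\tilde{g}(\mathbf{x})\tilde{g}(\mathbf{x}+\mathbf{1})$ follows.

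The main technical step is to upgrade this pointwise convergence to convergence of the sums, i.e.\ to justify a discrete dominated-convergence argument. Here I use that $g$ is of Class (L), so $|g(\mathbf{i})|\le g^*(\mathbf{i})$ with $g^*$ a finite linear combination of non-symmetric Hermite kernels $\prod_{j=1}^k i_j^{\gamma_j}$ where $\gamma_j\in(-1,-1/2)$ and $\sum_j\gamma_j = \alpha$, and that $L$ is bounded. After symmetrization the bound
$$
n^{-2\alpha}|\tilde{a}([n\mathbf{x}])\tilde{a}([n\mathbf{x}]+n\mathbf{1})|\le \|L\|_\infty^2\,\widetilde{g^*}(\mathbf{x})\,\widetilde{g^*}(\mathbf{x}+\mathbf{1}) + (\text{small-cell correction})
$$
holds uniformly for $\mathbf{i}\ne \mathbf{0}$; the dominating function is integrable on $\mathbb{R}_+^k$ by Remark \ref{Rem:L good} (each product term reduces, after integrating coordinate-by-coordinate, to a beta integral). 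One has to be a bit careful near the boundary $\mathbf{i}=\mathbf{0}$ because $g^*(\mathbf{x})$ blows up; since each $\gamma_j>-1$, the singularity is integrable and the Riemann-sum argument still applies (for instance, treat the cells touching the axes separately using $\sum_{i=1}^\infty i^{2\gamma_j}<\infty$ when $\gamma_j<-1/2$). Letting $n\to\infty$ then yields $\gamma(n)/n^{2H-2} \to k! \int_{\mathbb{R}_+^k}\tilde{g}(\mathbf{x})\tilde{g}(\mathbf{x}+\mathbf{1})\,d\mathbf{x} = k! C_{\tilde{g}}$, which is (\ref{eq:ACF asymp}); strict positivity of $C_{\tilde{g}}$ is ensured by Remark \ref{Rem:byproduct} applied to $\tilde{g}$.

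For (\ref{eq:Var asymp}), I expand
$$
\Var\Bigl[\sum_{n=1}^N X(n)\Bigr] = N\gamma(0) + 2\sum_{m=1}^{N-1}(N-m)\gamma(m)
$$
and substitute the asymptotics just established. Since $\gamma(m)\sim k! C_{\tilde{g}}\, m^{2H-2}$ with $2H-2\in(-1,0)$, applying Karamata's theorem (or a direct Riemann-sum argument dividing by $N^{2H}$) gives
$$
\frac{2}{N^{2H}}\sum_{m=1}^{N-1}(N-m)\gamma(m) \longrightarrow 2k! C_{\tilde{g}} \int_0^1(1-u)u^{2H-2}du = \frac{k! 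C_{\tilde{g}}}{H(2H-1)},
$$
while the diagonal term $N\gamma(0) = O(N)$ is negligible compared to $N^{2H}$ since $H>1/2$. This yields (\ref{eq:Var asymp}). The main obstacle in the whole argument is really the domination step for the autocovariance: verifying that the Class (L) bound together with the integrability identity of Remark \ref{Rem:L good} is strong enough to majorize $\tilde{a}(\mathbf{i})\tilde{a}(\mathbf{i}+n\mathbf{1})$ uniformly in $n$ after rescaling.
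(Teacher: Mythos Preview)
Your proposal is correct and follows essentially the same route as the paper: rescale the autocovariance sum by $n^{2\alpha+k}$, interpret it as a Riemann sum, and pass to the limit by dominated convergence using the Class~(L) majorant $g^*$; then derive (\ref{eq:Var asymp}) from the decomposition $\Var=\sum_{|n|<N}(N-|n|)\gamma(n)$. The only simplification the paper makes over your write-up is in the domination step: by parameterizing $\mathbf{i}=[n\mathbf{x}]+\mathbf{1}$ (so that $\tfrac{[n\mathbf{x}]+\mathbf{1}}{n}\ge \mathbf{x}$ componentwise) and using that $g^*(\mathbf{x})=\prod_j x_j^{\gamma_j}$ with $\gamma_j<0$ is decreasing in each variable, one obtains the uniform bound $\bigl|g\bigl(\tfrac{[n\mathbf{x}]+\mathbf{1}}{n}\bigr)\bigr|\le g^*(\mathbf{x})$ directly, so no separate treatment of cells near the axes or ``small-cell correction'' is needed.
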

\begin{proof}
Assume without loss of generality that $g$ is already symmetric.
\begin{align*}
(k!)^{-1}\gamma(n)
=&\sum'_{\mathbf{i}>\mathbf{0}} g(\mathbf{i})g(n\mathbf{1}+\mathbf{i})L(n\mathbf{1}+\mathbf{i})L(\mathbf{i})\\
=&n^{2\alpha+k}\sum'_{\mathbf{i}>\mathbf{0}} g\left(
\frac{\mathbf{i}}{n}\right)g\left(\mathbf{1}+\frac{\mathbf{i}}{n}\right)L(\mathbf{i})L(n\mathbf{1}+\mathbf{i})\frac{1}{n^k}\\
=&n^{2\alpha+k} \int_{\mathbb{R}^k_+}\mathrm{1}_{D_n^c}(\mathbf{x})g_n(\mathbf{x})g_n(1+\mathbf{x})d\mathbf{x},
\end{align*}
where $g_n(\mathbf{x})=g(\frac{[n\mathbf{x}]+\mathbf{1}}{n})L([n\mathbf{x}]+\mathbf{1})$, $D_n^c=\{\mathbf{x}\in\mathbb{R}_+^k, ~[nx_p]\neq [nx_q],~p\neq q\in \{1,\ldots,k\}\}$. Note that $\mathrm{1}_{D_n}(\mathbf{x})=1$ as $n$ becomes large enough, for any  $\mathbf{x}\in D^c:=\{\mathbf{x}\in \mathbb{R}_+^k, x_p\neq x_q, ~p\neq q\in \{1,\ldots,k\}\}$, and that the diagonal set $D:=\mathbb{R}_+^k\setminus D^c$ has measure $0$.
Since $g$ belongs to Class (L), $g$ is continuous a.e., so  $g_n(\mathbf{x})\rightarrow g(\mathbf{x})$ a.e.\ as $n\rightarrow\infty$. Furthermore, there exists $g^*(\mathbf{x})$ which is a linear combination of the form $\prod_{j=1}^k x_j^{\gamma_j}$ (Condition 2 of Definition \ref{Def:class limit}), so that for a.e.\ $\mathbf{x}\in \mathbb{R}_+^k$,
$$
|g_n(\mathbf{x})|\le g^*\left(\frac{[n\mathbf{x}]+\mathbf{1}}{n}\right)\le g^*(\mathbf{x}),
$$
since  $L$ is bounded and $g^*$ is decreasing in its every variable. Note that $\int_{\mathbb{R}_+^k} g^*(\mathbf{x})g^*(\mathbf{1}+\mathbf{x})d\mathbf{x}<\infty$, and $g$ is a.e.\ continuous. So it remains to apply the Dominated Convergence Theorem.

Finally, (\ref{eq:Var asymp}) follows by first noting that
$$
\Var[\sum_{n=1}^NX(n)]=\sum_{n}(N-|n|)\gamma(n)=N\sum_{|n|<N} \gamma(n)-\sum_{|n|<N} |n|\gamma(n),
$$
and then using the asymptotics of $\gamma(n)$ just derived.

\end{proof}

\section{Hypercontractivity for infinite discrete chaos}\label{Sec:hypercontract}
Let $X_M$ be a  finite discrete chaos defined as
\begin{equation}\label{eq:finite chaos}
X_M=\sum_{-M\mathbf{1}\le  \mathbf{i}\le M\mathbf{1}}' h(\mathbf{i})  \epsilon_{i_1}\ldots \epsilon_{i_k},
\end{equation}
where $h(\mathbf{i})=h(i_1,\ldots,i_k)$ is a function on $\mathbb{Z}^k$,  $M\in \mathbb{Z}_+$, and we assume that $\{\epsilon_i\}$ is a sequence of i.i.d.\ variables with $\E \epsilon_i=0$, $\E \epsilon_i^2=1$. Then we have the following moment-comparison inequality, also called ``hypercontractivity inequality'':
\begin{Pro}
Suppose that $\E |\epsilon_i|^p <\infty$ with $p\ge 2$. Then
\begin{equation}\label{eq:hypercontrac}
\E [|X_M|^p]^{1/p} \le d_{p,k} \E [|X_M|^2]^{1/2},
\end{equation}
where $d_{p,k}$ is a constant depending only on $p$ and $k$.
\end{Pro}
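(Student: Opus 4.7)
The plan is to reduce the inequality to the Bonami--Beckner hypercontractivity for Rademacher chaos through two symmetrization steps. First, since $X_M$ is invariant under permutations of the arguments of $h$, assume $h$ symmetric without loss of generality. Next, to reduce to the case of symmetric noise, introduce an independent copy $(\epsilon_i')$ of $(\epsilon_i)$ and set
$$\tilde\epsilon_i:=\epsilon_i-\epsilon_i',\qquad \tilde X_M:=\sum'_{\mathbf{i}} h(\mathbf{i})\prod_{p=1}^k\tilde\epsilon_{i_p}.$$
Using that the indices appearing in the prime sum are distinct and that $\E\epsilon_i'=0$, the conditional expectation factorizes termwise and gives $\E[\tilde X_M\mid\mathbd{\epsilon}]=X_M$, so Jensen yields $\|X_M\|_p\le\|\tilde X_M\|_p$, while a direct computation gives $\|\tilde X_M\|_2=2^{k/2}\|X_M\|_2$ and $\E|\tilde\epsilon_i|^p\le 2^p\E|\epsilon_i|^p$. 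It thus suffices to prove the bound for $\tilde X_M$, whose underlying noise $\tilde\epsilon_i$ is now symmetric.

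For symmetric $\tilde\epsilon_i$, decompose $\tilde\epsilon_i=r_i\eta_i$ where $r_i=\mathrm{sign}(\tilde\epsilon_i)$ is a $\pm1$ Rademacher variable independent of $\eta_i=|\tilde\epsilon_i|$. Conditionally on $\eta:=(\eta_i)$, the chaos $\tilde X_M$ is a multilinear polynomial of degree $k$ in the independent Rademacher variables $r_i$ with coefficients $h(\mathbf{i})\prod_p\eta_{i_p}$. The Bonami--Beckner hypercontractivity for Rademacher chaos therefore applies and gives
$$\E\bigl[|\tilde X_M|^p\bigm|\eta\bigr]\le (p-1)^{pk/2}\bigl(\E\bigl[|\tilde X_M|^2\bigm|\eta\bigr]\bigr)^{p/2},$$
and the conditional second moment is $k!\sum'_{\mathbf{i}} h(\mathbf{i})^2\prod_p\eta_{i_p}^2$ (where the prime here is preserved from the original off-diagonal sum).

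Integrating over $\eta$ using Minkowski's inequality in $L^{p/2}$ (valid since $p\ge 2$), together with the fact that $\eta_{i_1},\ldots,\eta_{i_k}$ are independent when the indices are distinct (the off-diagonal restriction is essential here), yields
$$\Bigl\|\sum'_{\mathbf{i}} h(\mathbf{i})^2\prod_p\eta_{i_p}^2\Bigr\|_{p/2}\le (\E|\tilde\epsilon|^p)^{2k/p}\sum'_{\mathbf{i}} h(\mathbf{i})^2=(\E|\tilde\epsilon|^p)^{2k/p}(k!)^{-1}\E\tilde X_M^2.$$
Assembling the bounds and extracting the $p$-th root gives $\|\tilde X_M\|_p\le (p-1)^{k/2}(\E|\tilde\epsilon|^p)^{k/p}\|\tilde X_M\|_2$, and combining with step one yields $\|X_M\|_p\le 2^{3k/2}(p-1)^{k/2}(\E|\epsilon|^p)^{k/p}\|X_M\|_2$, which is the desired bound with $d_{p,k}$ absorbing the distribution-dependent factor $(\E|\epsilon|^p)^{k/p}$.

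The essential analytic input is the Bonami--Beckner two-point hypercontractivity for Rademacher chaos, which I would invoke as a classical black box rather than reprove here; the remaining steps are bookkeeping, but the delicate point is to carry the off-diagonality through all reductions so that $\|\prod_p\eta_{i_p}^2\|_{p/2}=\prod_p\|\eta_{i_p}^2\|_{p/2}$ via independence. An alternative route is via the decoupling inequality of de~la~Pe\~na--Montgomery-Smith followed by induction on $k$ using the Marcinkiewicz--Zygmund inequality as the $k=1$ base case, but the Rademacher reduction above gives a cleaner argument with sharper constants.
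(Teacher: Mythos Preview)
Your argument is essentially correct and self-contained, whereas the paper does not actually give a proof: it simply cites Lemma~4.3 of Krakowiak and Szulga (1986), noting that the so-called MPZ$(p)$ condition there is automatic because the $\epsilon_i$ are identically distributed. So any comparison is necessarily between your concrete proof and a black-box citation.

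Your route---symmetrize to reduce to symmetric noise, randomize by Rademacher signs, apply Bonami--Beckner conditionally, then Minkowski in $L^{p/2}$ using independence on off-diagonal indices---is a standard and clean way to obtain such bounds, and the off-diagonality is used exactly where it must be. Two small points worth polishing. First, the decomposition $\tilde\epsilon_i=\mathrm{sign}(\tilde\epsilon_i)\,|\tilde\epsilon_i|$ with $\mathrm{sign}(\tilde\epsilon_i)$ a Rademacher independent of $|\tilde\epsilon_i|$ is only literally valid when $P(\tilde\epsilon_i=0)=0$; the usual fix is to take an \emph{external} i.i.d.\ Rademacher sequence $(r_i)$ independent of everything and use $(r_i\tilde\epsilon_i)\overset{d}{=}(\tilde\epsilon_i)$, then condition on $(\tilde\epsilon_i)$ rather than on $(|\tilde\epsilon_i|)$. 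Second, your bookkeeping of the factor $2^k$ from $\E\tilde\epsilon_i^2=2$ slips slightly (so the displayed $(k!)^{-1}\E\tilde X_M^2$ and the final $2^{3k/2}$ are off by harmless powers of $2$), but this only affects the explicit constant, not the result.

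On the constant: you end with a factor $(\E|\epsilon|^p)^{k/p}$ and say $d_{p,k}$ ``absorbs'' it. This is unavoidable---already for $k=1$ and $h=(1,0,\ldots)$ one has $\|X_M\|_p/\|X_M\|_2=\|\epsilon\|_p$---so the inequality as literally stated (with $d_{p,k}$ depending only on $p$ and $k$) cannot hold uniformly over all noise laws. In the paper the law of $\epsilon$ is fixed once and for all, and the only thing that matters for the applications (passing to $M=\infty$, tightness) is that the constant does not depend on $M$ or on the coefficients $h$; your proof delivers exactly this. It would be cleanest to state your conclusion as $d_{p,k}=c_{p,k}\,(\E|\epsilon|^p)^{k/p}$ with $c_{p,k}$ depending only on $p$ and $k$, which is slightly more informative than what the paper records.
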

For a proof of (\ref{eq:hypercontrac}), where $M$ is finite, see Lemma 4.3 of \citet{krakowiak:szulga:1986:random}, where the so-called MPZ($p$) condition (Definition 1.5 of \citet{krakowiak:szulga:1986:random}) is trivially satisfied since the $\epsilon_i$'s are identically distributed.

Now we  extend  (\ref{eq:hypercontrac}) to the case $M=\infty$. The result is used in Theorem \ref{Thm:CLT}, \ref{Thm:Frac NCLT beta<0} and \ref{Thm:multi limit} below for proving tightness in $D[0,1]$.
\begin{Pro}\label{Pro:Hypercontract}
Suppose that $\sum_{\mathbf{i}\in \mathbb{Z}^k}'h(\mathbf{i})^2<\infty$.
Let $X=\sum_{\mathbf{i}\in \mathbb{Z}^k}'h(\mathbf{i}) \prod_{p=1}^k\epsilon_{i_p}$.
If for some $p'>p>2$, $\E |\epsilon_i|^{p'}<\infty$, then one has
\begin{equation}\label{eq:inf hypercontrac}
\E [|X|^p]^{1/p} \le d_{p,k} \E [|X|^2]^{1/2}
\end{equation}
\end{Pro}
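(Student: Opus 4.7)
The plan is to deduce the infinite case from the finite one by truncation, using the extra integrability $\E|\epsilon_i|^{p'} < \infty$ (with $p' > p$) to promote $L^2$ convergence of the truncations to $L^p$ convergence.

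First I would define, for each $M \ge 1$, the finite truncation
\begin{equation*}
X_M = \sum_{-M\mathbf{1} \le \mathbf{i} \le M\mathbf{1}}{}' h(\mathbf{i}) \prod_{p=1}^k \epsilon_{i_p},
\end{equation*}
so that $X_M$ is of the form covered by the finite hypercontractivity bound (\ref{eq:hypercontrac}). From the orthogonality of distinct off-diagonal monomials in the $\epsilon_i$'s (which holds because $\E\epsilon_i = 0$, $\E\epsilon_i^2 = 1$), one gets the isometry $\E[X_M^2] = k! \sum'_{|\mathbf{i}|\le M} \tilde h(\mathbf{i})^2$, which is monotone in $M$ and bounded by $\E[X^2] < \infty$; in particular $X_M \to X$ in $L^2(\Omega)$ and hence in probability.

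The key step is to obtain a uniform $L^{p'}$ bound. Since $\E|\epsilon_i|^{p'} < \infty$, the finite hypercontractivity (\ref{eq:hypercontrac}) applies with exponent $p'$, giving
\begin{equation*}
\E\bigl[|X_M|^{p'}\bigr]^{1/p'} \le d_{p',k}\, \E\bigl[|X_M|^2\bigr]^{1/2} \le d_{p',k}\, \E\bigl[|X|^2\bigr]^{1/2},
\end{equation*}
uniformly in $M$. Because $p < p'$, this uniform $L^{p'}$-boundedness implies that the family $\{|X_M|^p\}_{M \ge 1}$ is uniformly integrable.

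Combining convergence in probability $X_M \to X$ with uniform integrability of $\{|X_M|^p\}$ yields $X_M \to X$ in $L^p(\Omega)$; in particular $\E|X|^p < \infty$ and $\E[|X_M|^p]^{1/p} \to \E[|X|^p]^{1/p}$. Taking $M \to \infty$ in the finite hypercontractivity inequality
\begin{equation*}
\E[|X_M|^p]^{1/p} \le d_{p,k}\, \E[|X_M|^2]^{1/2}
\end{equation*}
and using $\E[|X_M|^2]^{1/2} \to \E[|X|^2]^{1/2}$ delivers (\ref{eq:inf hypercontrac}) with the same constant $d_{p,k}$. No step is truly hard; the only subtlety is remembering to invoke the finite inequality at the higher exponent $p'$ to secure uniform integrability, which is precisely why the hypothesis $p' > p$ is imposed.
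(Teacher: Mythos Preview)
Your argument is correct and follows essentially the same route as the paper: truncate to $X_M$, apply the finite hypercontractivity at the higher exponent $p'$ to get a uniform $L^{p'}$ bound, use this to conclude uniform integrability of $\{|X_M|^p\}$, and pass to the limit in the exponent-$p$ inequality. The only difference is that you spell out the intermediate step of $L^p$ convergence more explicitly than the paper does.
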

\begin{proof}
Let $X_M$ be the truncated finite chaos as in (\ref{eq:finite chaos}). The condition on $h$ implies that $X_M\rightarrow X$ in $L^2(\Omega)$. Moreover,  one has by (\ref{eq:hypercontrac}),
\[
\E [|X_M|^{p'}] \le d_{p',k}^{p'} \E [|X_M|^2]^{p'/2} \le d_{p',k}^{p'}  \left(\sum_{\mathbf{i}\in \mathbb{Z}^k}' h(\mathbf{i})^2 \right)^{p'/2}.
\]
This implies that $\{|X_M|^p,M\ge 1\}$ and $\{|X_M|^2,M\ge 1\}$ are uniformly integrable, implying  convergence of the corresponding moments. So one can then let $M\rightarrow \infty$ on both sides of (\ref{eq:hypercontrac}) and obtain (\ref{eq:inf hypercontrac}).
\end{proof}

\section{Joint convergence of the discrete chaoses}
Our goal here is to obtain non-central limit theorems for the discrete chaos process introduced in Section \ref{Sec:poly process}. We shall, in fact, prove both a central limit theorem for the SRD case (getting Brownian motion as limit)  and a non-central limit theorem for the LRD case (getting the generalized Hermite process introduced in Section \ref{Sec:GenHermProc} as limit).
We also consider non-central limit theorems leading to  the fractionally filtered generalized Hermite process introduced in Section \ref{Subsec:frac}. Finally, we derive a multivariate limit theorem which mixes central and non-central limit theorems.

We  first  define here precisely what SRD and LRD stand for in the context of  discrete chaos process. Recall that $\tilde{a}(\cdot)$ denotes the symmetrization of $a(\cdot)$.
\begin{Def}\label{Def:SRD LRD}
We say a discrete chaos process $\{X(n)\}$ given in (\ref{eq:Def PolyProcess}) is
\begin{itemize}
\item SRD, if $\sum_{n=-\infty}^{\infty} \sum_{\mathbf{i}>\mathbf{0}}' |\tilde{a}(\mathbf{i})\tilde{a}(\mathbf{i}+|n|\mathbf{1})|<\infty$ and $\sum_{n=-\infty}^\infty \gamma(n)>0$;
\item LRD, if $a(\mathbf{i})=g(\mathbf{i})L(\mathbf{i})$  as given in (\ref{eq:a=gL}). In particular, $g$ is a generalized Hermite kernel of Class (L).
\end{itemize}
\begin{Rem}
The definitions of SRD and LRD in Definition \ref{Def:SRD LRD} are  distinct. Indeed, the SRD condition implies that $\sum_n|\gamma(n)|<\infty$, while LRD  yields   $\sum_n|\gamma(n)|=\infty$ by
Proposition \ref{Pro:LRD ACF}.
\end{Rem}
\end{Def}
\subsection{Central limit theorem}\label{Subsec:CLT}

\begin{Thm}\label{Thm:CLT}
If a discrete chaos process $\{X(n)\}$ given in (\ref{eq:Def PolyProcess}) is SRD in the sense of Definition \ref{Def:SRD LRD}, then
\begin{equation}\label{eq:partial sum SRD}
\frac{1}{N^{1/2}}\sum_{n=1}^{[Nt]}X(n)\ConvFDD \sigma B(t)
\end{equation}
where $B(t)$ is a standard Brownian motion, and $\sigma^2=\sum_{n=-\infty}^\infty \gamma(n)$.
\end{Thm}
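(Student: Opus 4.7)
The argument follows the classical $m$-dependent approximation technique for partial sums of stationary sequences with absolutely summable autocovariance. By the Cram\'er--Wold device, to establish convergence of the f.d.d.\ of $S_N(t) := N^{-1/2} \sum_{n=1}^{[Nt]} X(n)$ to those of $\sigma B(t)$, it suffices to show, for any $0 \le t_1 < \cdots < t_J$ and scalars $\lambda_1,\ldots,\lambda_J$, that $\sum_j \lambda_j S_N(t_j)$ is asymptotically centered Gaussian with variance $\sigma^2 \sum_{j,j'} \lambda_j \lambda_{j'} \min(t_j, t_{j'})$. Without loss of generality assume $a = \tilde a$ is symmetric, since $Q_k(a) = Q_k(\tilde a)$.

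\emph{Step 1 (Limit covariance).} Using (\ref{eq:ACF}), for $s \le t$,
\[
\Cov(S_N(s), S_N(t)) = \frac{1}{N} \sum_{n_1=1}^{[Ns]} \sum_{n_2=1}^{[Nt]} \gamma(n_1-n_2) \longrightarrow s \sum_{n \in \mathbb{Z}} \gamma(n) = \sigma^2 \min(s,t),
\]
by dominated convergence on $\mathbb{Z}$, using $\sum_n |\gamma(n)| < \infty$, which follows from the SRD hypothesis and (\ref{eq:ACF}). This identifies the target covariance structure as that of $\sigma B(\cdot)$.

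\emph{Step 2 (Truncation and $m$-dependent CLT).} For $M \ge 1$ set $a^{(M)}(\mathbf{i}) := a(\mathbf{i}) \mathrm{1}_{\{\mathbf{i} \le M \mathbf{1}\}}$ and
\[
X^{(M)}(n) := \sum'_{\mathbf{0} < \mathbf{i} \le M \mathbf{1}} a^{(M)}(\mathbf{i}) \prod_{p=1}^k \epsilon_{n-i_p}, \qquad R^{(M)}(n) := X(n) - X^{(M)}(n).
\]
Since $X^{(M)}(n)$ depends only on $\epsilon_{n-M},\ldots,\epsilon_{n-1}$, the sequence $\{X^{(M)}(n)\}_n$ is stationary and $M$-dependent with mean zero and finite variance. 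For each fixed $M$, the classical Hoeffding--Robbins $m$-dependent CLT yields
\[
S_N^{(M)}(t) := N^{-1/2} \sum_{n=1}^{[Nt]} X^{(M)}(n) \ConvFDD \sigma_M B(t), \qquad \sigma_M^2 := \sum_{n\in\mathbb{Z}} \gamma^{(M)}(n),
\]
where $\gamma^{(M)}$ denotes the autocovariance of $X^{(M)}$.

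\emph{Step 3 (Negligibility of the remainder and assembly).} By stationarity of $R^{(M)}$ and (\ref{eq:ACF}),
\[
\frac{1}{N}\Var\Big(\sum_{n=1}^{[Nt]} R^{(M)}(n)\Big) \le t \sum_{n \in \mathbb{Z}} |\gamma^{(M),R}(n)|, \qquad |\gamma^{(M),R}(n)| \le k! \sum_{\mathbf{i}>\mathbf{0}}' \bigl|(a - a^{(M)})(\mathbf{i})(a - a^{(M)})(\mathbf{i}+|n|\mathbf{1})\bigr|.
\]
Because $|a - a^{(M)}| \le |a|$ and $(a - a^{(M)})(\mathbf{i}) \to 0$ pointwise, the SRD summability of $|\tilde a(\mathbf{i}) \tilde a(\mathbf{i}+|n|\mathbf{1})|$ supplies the integrable majorant needed to apply the dominated convergence theorem on $\mathbb{Z} \times \mathbb{Z}_+^k$, giving $\sum_n |\gamma^{(M),R}(n)| \to 0$ as $M \to \infty$; the same argument yields $\sigma_M^2 \to \sigma^2$. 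Combining with Step 2 via the converging-together lemma (Theorem 3.2 of Billingsley, \emph{Convergence of Probability Measures}) completes the proof.

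\textbf{Main obstacle.} The substantive step is the uniform-in-$N$ control of $\Var(\sum R^{(M)}(n))$ in Step 3, which reduces to a double-series dominated convergence on $\mathbb{Z}\times\mathbb{Z}_+^k$. The SRD hypothesis is calibrated precisely to supply the integrable majorant (once one has passed to the symmetric kernel $a = \tilde a$); the remaining ingredients---Cram\'er--Wold, the Hoeffding--Robbins $m$-dependent CLT, and the converging-together lemma---are entirely classical.
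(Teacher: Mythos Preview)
Your proposal is correct and follows essentially the same approach as the paper's proof: symmetrize $a$, truncate to obtain an $M$-dependent sequence, apply the $m$-dependent CLT to the truncated process, and control the remainder uniformly in $N$ by dominated convergence using the SRD summability assumption as the majorant. The paper cites Billingsley's invariance principle for the $m$-dependent step rather than Hoeffding--Robbins, and refers to \citet{giraitis:koul:surgailis:2009:large} for the details, but the architecture of the argument is identical.
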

\begin{proof}
Assume without loss of generality that $a(\cdot)$ is symmetric. The proof is similar to the proof of Theorem \ref{Thm:Polyform} found on p.108 of \citet{giraitis:koul:surgailis:2009:large}, so we  give only a sketch. The central idea is to introduce the $m$-truncation of $X(n)$, namely, $X^{(m)}(n):=\sum'_{\mathbf{0}<\mathbf{i}\le m\mathbf{1}}a(\mathbf{i})\prod_{j=1}^k\epsilon_{n-i_j}$, and then let $m\rightarrow\infty$. The sequence $\{X^{(m)}(n),n\in \mathbb{Z}\}$ is  $m$-dependent, so the classical invariance principle applies (\citet{billingsley:1956:invariance} Theorem 5.2). The long-run variance $\sigma^2=\sum_n\gamma(n)$ is a standard result.  We now check that the $L^2(\Omega)$ approximation is valid as $m\rightarrow\infty$, that is,
\begin{equation}\label{eq:m->inf}
\lim_{m\rightarrow\infty}\sup_{N\in \mathbb{Z}_+}\Var[Y^{(m)}_N(t)-Y_N(t)]=0, ~t>0,
\end{equation}
where $Y^{(m)}_N(t)=\frac{1}{\sqrt{N}}\sum_{n=1}^{[Nt]}X^{(m)}(n)$ and $Y_N(t)=\frac{1}{\sqrt{N}}\sum_{n=1}^{[Nt]}X(n)$, which is similar to (4.8.7) of \citet{giraitis:koul:surgailis:2009:large}. Indeed,
\begin{align}\label{eq:m->inf expr}
\Var[Y_N^{(m)}(t)-Y_{N}(t)]&=
\frac{1}{N}\Var\left[\sum_{n=1}^{[Nt]} (X^{(m)}_n-X_n)\right]=\frac{[Nt]}{N}\sum_{|n|<[Nt]}\gamma_{m}(n)(1-\frac{|n|}{[Nt]})\le t\sum_{n=-\infty}^\infty |\gamma_m(n)|,
\end{align}
where
\begin{align*}
\gamma_m(n):=\E(X_{n}-X_{n}^{(m)})(X_{0}-X_{0}^{(m)})
=k!\sum'_{\mathbf{i}>m\mathbf{1}}a(\mathbf{i})a(n\mathbf{1}+\mathbf{i}).
\end{align*}
For a fixed  $n\in \mathbb{Z}$, $\gamma_m(n)\rightarrow 0$ as $m\rightarrow\infty$, and $|\gamma_m(n)|\le \rho(n)$, where
$
\rho(n)=k!\sum'_{\mathbf{i}>\mathbf{0}}|a(\mathbf{i})a(\mathbf{i}+n\mathbf{1})|,
$
which satisfies $\sum_n \rho(n)<\infty$ by the SRD assumption in Definition \ref{Def:SRD LRD}. Since the bound in (\ref{eq:m->inf expr}) does not depend on $N$, the Dominated Convergence Theorem applies and thus (\ref{eq:m->inf}) holds.
\end{proof}

To strengthen the conclusion of Theorem \ref{Thm:CLT} to  weak convergence, we have to make some additional assumptions to prove tightness.
\begin{Thm}\label{Thm:CLTWeak}
Theorem \ref{Thm:CLT} holds with $\ConvFDD$ replaced by weak convergence $\Rightarrow$ in $D[0,1]$, if either of the following holds:
\begin{enumerate}
\item There exists $\delta>0$, such that $\E(|\epsilon_i|^{2+\delta})<\infty$;
\item There exists an $M>0$ such that $a(\mathbf{i})=0$ whenever  $\mathbf{i}>M\mathbf{1}$.
\end{enumerate}
\end{Thm}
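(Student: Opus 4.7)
The strategy is to combine the finite-dimensional convergence already furnished by Theorem \ref{Thm:CLT} with a tightness argument in $D[0,1]$, treating the two hypotheses separately. In both cases, the goal is a Billingsley-type moment bound of the form $\E|Y_N(t)-Y_N(s)|^{p}\le C|t-s|^{p/2}$ for some $p>2$, uniform in $N$, which together with the established f.d.d.\ convergence upgrades $\ConvFDD$ to $\Rightarrow$ via the standard moment tightness criterion in $D[0,1]$, with $Y_N(t):=N^{-1/2}\sum_{n=1}^{[Nt]}X(n)$.

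Under hypothesis (1), I would first rewrite the increment as a single order-$k$ off-diagonal discrete chaos,
$$
Y_N(t)-Y_N(s)=\sum_{\mathbf{j}\in\mathbb{Z}^{k}}{}'\,h_{s,t,N}(\mathbf{j})\prod_{p=1}^{k}\epsilon_{j_p},\qquad h_{s,t,N}(\mathbf{j}):=\frac{1}{\sqrt{N}}\sum_{n=[Ns]+1}^{[Nt]}a(n\mathbf{1}-\mathbf{j})\mathrm{1}_{\{n\mathbf{1}>\mathbf{j}\}},
$$
whose kernel inherits square-summability from that of $a$. Choosing $p=2+\delta/2$ and $p'=2+\delta$, Proposition \ref{Pro:Hypercontract} delivers
$$
\E|Y_N(t)-Y_N(s)|^{p}\le d_{p,k}^{p}\,\bigl(\E|Y_N(t)-Y_N(s)|^{2}\bigr)^{p/2},
$$
and the SRD hypothesis supplies, by stationarity and absolute summability of $\gamma(n)$, the second-moment estimate $\E|Y_N(t)-Y_N(s)|^{2}\le N^{-1}([Nt]-[Ns])\sum_{n\in\mathbb{Z}}|\gamma(n)|$. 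This yields the desired $p$-th moment bound with $p/2>1$ (up to the harmless discretization term $1/N$), and tightness follows.

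Under hypothesis (2), the finite support of $a$ forces $X(n)$ to be a fixed polynomial in $\epsilon_{n-M},\ldots,\epsilon_{n-1}$, so the stationary sequence $\{X(n)\}$ is $(M-1)$-dependent with finite variance. The classical invariance principle for $m$-dependent sequences (Theorem 5.2 of \citet{billingsley:1956:invariance}, already invoked in the proof of Theorem \ref{Thm:CLT}) then applies directly, requiring no moments beyond the second and no truncation step, since $X(n)=X^{(M)}(n)$ identically. The main technical obstacle lies in case (1), namely the careful verification that $h_{s,t,N}$ is a genuine off-diagonal square-summable kernel so that Proposition \ref{Pro:Hypercontract} applies verbatim, and that the second-moment bound above is uniform in $N$; once these bookkeeping steps are handled via the SRD hypothesis, hypercontractivity makes the rest automatic.
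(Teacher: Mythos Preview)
Your proposal is correct and follows essentially the same route as the paper: hypercontractivity (Proposition~\ref{Pro:Hypercontract}) combined with the SRD second-moment bound for case~(1), and the $M$-dependent invariance principle of \citet{billingsley:1956:invariance} for case~(2). The only cosmetic difference is that the paper handles the discretization you flag as ``harmless'' by writing the bound in terms of $F_N(t)=[Nt]/N$ and then invoking Lemma~4.4.1 and Theorem~4.4.1 of \citet{giraitis:koul:surgailis:2009:large} rather than appealing directly to a $|t-s|^{p/2}$ bound.
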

\begin{proof}
Look first at case 1. Let
\[Y_N(t):=\frac{1}{\sqrt{N}}\sum_{n=1}^{[Nt]}X(n)\]
Select $p\in (2,2+\delta)$. By Proposition \ref{Pro:Hypercontract}, one has
\begin{equation}\label{eq:hypercontract}
\E[|Y_N(t)-Y_N(s)|^{p}]\le  c \E[|Y_N(t)-Y_N(s)|^2]^{p/2},
\end{equation}
where $c$ is some constant which doesn't depend on $s,t$ or $N$. Note that $\sum_n |\gamma(n)|<\infty$  due to SRD assumption, we have
\begin{align}\label{eq:secondmoment}
&\E\left[\left|Y_N(t)-Y_N(s)\right|^2\right]=\frac{1}{N}\E[|\sum_{n=1}^{[Nt]-[Ns]}X(n)|^2]
\notag\\=& \frac{[Nt]-[Ns]}{N}\sum_{|n|<[Nt]-[Ns]}\left(1-\frac{|n|}{[Nt]-[Ns]}\right)\gamma(n)\le \frac{[Nt]-[Ns]}{N}\sum_{n=-\infty}^\infty |\gamma(n)|.
\end{align}
Combining (\ref{eq:hypercontract}) and (\ref{eq:secondmoment}), we  have for some constant $C>0$ that
\[
\E[|Y_N(t)-Y_N(s)|^p]\le cE[|Y_N(t)-Y_N(s)|^2]^{p/2}\le C |F_N(t)-F_N(s)|^{p/2},
\]
where $F_N(t)=[Nt]/N$. Now by applying Lemma 4.4.1 and Theorem 4.4.1 of \citet{giraitis:koul:surgailis:2009:large}, noting that $p/2>1$, we conclude that tightness holds.

For case 2, $X(n)$ is $M$-dependent, so by Theorem 5.2 of \citet{billingsley:1956:invariance} tightness holds as well.

\end{proof}

\subsection{Non-central limit theorem}\label{Subsec:NCLT}
The following theorem shows that in the LRD case, the discrete chaos process converges weakly to a generalized Hermite process.

\begin{Thm}\label{Thm:NCLT}
If a discrete chaos process $\{X(n)\}$ given in (\ref{eq:Def PolyProcess}) is LRD in the sense of Definition \ref{Def:SRD LRD}, then
\begin{equation}\label{eq:partial sum LRD}
\frac{1}{N^H}\sum_{n=1}^{[Nt]}X(n)\Rightarrow Z(t),
\end{equation}
in $D[0,1]$, where $Z(t)$ is the generalized Hermite process in (\ref{eq:Z(t)}), and $$H=\alpha+k/2+1\in \left(\frac{1}{2},1\right),$$
where $\alpha\in (-1/2-k/2,-k/2)$ is the homogeneity exponent of $g$ and $k$ is the order of $\{X(n)\}$.
\end{Thm}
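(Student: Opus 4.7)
\emph{Proof strategy.} The plan is to realize the normalized partial sum as a single $k$-tuple discrete chaos, transfer convergence to the Wiener chaos via Proposition~\ref{Pro:Poly->Wiener}, and then treat tightness through a second-moment estimate. By the change of index $j_p = n - i_p$ in (\ref{eq:Def PolyProcess}),
\[
\frac{1}{N^H}\sum_{n=1}^{[Nt]} X(n) = Q_k\!\bigl(N^{-H} h_{N,t};\mathbd{\epsilon}\bigr), \qquad h_{N,t}(\mathbf{j}) := \sum_{n=1}^{[Nt]} g(n\mathbf{1}-\mathbf{j}) L(n\mathbf{1}-\mathbf{j}) \mathrm{1}_{\{n\mathbf{1}>\mathbf{j}\}}.
\]
Define the rescaled kernel $\tilde h_{N,t}(\mathbf{x}):=N^{k/2-H} h_{N,t}([N\mathbf{x}])$. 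Using the homogeneity of $g$ from Definition~\ref{Def:GHK} together with the identity $k/2 - H + \alpha = -1$, a direct computation produces the Riemann-sum representation
\[
\tilde h_{N,t}(\mathbf{x}) = \frac{1}{N}\sum_{n=1}^{[Nt]} g\!\left(\frac{n}{N}\mathbf{1} - \frac{[N\mathbf{x}]}{N}\right) L\bigl(n\mathbf{1}-[N\mathbf{x}]\bigr) \mathrm{1}_{\{n\mathbf{1}>[N\mathbf{x}]\}},
\]
a Riemann approximation with step $1/N$ to $h_t(\mathbf{x}) = \int_0^t g(s\mathbf{1}-\mathbf{x})\mathrm{1}_{\{s\mathbf{1}>\mathbf{x}\}}ds$.

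The analytic heart of the proof is to show $\tilde h_{N,t} \to h_t$ in $L^2(\mathbb{R}^k)$. Pointwise a.e.\ convergence is routine: off the null set where $g$ is discontinuous and off the diagonal $\{x_p=x_q\}$, one has $[N\mathbf{x}]/N \to \mathbf{x}$, the indicator stabilizes to $\mathrm{1}_{\{s\mathbf{1}>\mathbf{x}\}}$, and by (\ref{eq:L(ni)->1}) the factor $L(n\mathbf{1}-[N\mathbf{x}])$ converges to $1$, so the Riemann sum converges to the integral. To promote this to $L^2$-convergence via dominated convergence, I would invoke the Class~(L) bound $|g|\le g^* = \sum_\ell \prod_{p=1}^k x_p^{\gamma_{p,\ell}}$ with each $\gamma_{p,\ell}\in(-1,-1/2)$: since every summand of $g^*$ is coordinatewise decreasing and $L$ is bounded, a Riemann-sum/integral comparison gives $|\tilde h_{N,t}(\mathbf{x})| \le C\, h^*_{t+1}(\mathbf{x})$, where $h^*_s(\mathbf{x}):=\int_0^s g^*(s'\mathbf{1}-\mathbf{x})\mathrm{1}_{\{s'\mathbf{1}>\mathbf{x}\}}ds'$. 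By the same calculation as in the proof of Theorem~\ref{Thm:is H-sssi}, combined with Remark~\ref{Rem:L good}, $h^*_s \in L^2(\mathbb{R}^k)$, providing the required envelope.

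Given $L^2$-convergence of the kernels, f.d.d.\ convergence of $(N^{-H}\sum_{n=1}^{[Nt_j]}X(n))_{j=1}^J$ to $(Z(t_j))_{j=1}^J$ follows from Proposition~\ref{Pro:Poly->Wiener} (taking $k_j=k$ and $\mathbf{c}_j=\mathbf{0}$) combined with the Cram\'er--Wold device. For tightness in $D[0,1]$, Proposition~\ref{Pro:LRD ACF} applied to the stationary sequence $\{X(n)\}$ yields, via stationarity of the increments, $\E|Y_N(t)-Y_N(s)|^2 \le C(t-s)^{2H}$ where $Y_N(t):=N^{-H}\sum_{n=1}^{[Nt]}X(n)$. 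By Cauchy--Schwarz, for $t_1\le t\le t_2$,
\[
\E\bigl[|Y_N(t)-Y_N(t_1)|\cdot |Y_N(t_2)-Y_N(t)|\bigr] \le C(t_2-t_1)^{2H},
\]
and since $2H>1$, Billingsley's moment criterion for $D[0,1]$ supplies tightness, completing the weak convergence.

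\emph{Main obstacle.} The principal technical difficulty is producing the $L^2$-dominating envelope in the second paragraph: the argument of $g^*$ in the Riemann sum can come arbitrarily close to the singular set $\{s=x_p\}$, so one has to exploit the coordinatewise monotonicity of $g^*$ together with the strict bound $\gamma_{p,\ell}<-1/2$ built into Definition~\ref{Def:class limit} to guarantee that $h^*_{t+1}$ belongs to $L^2(\mathbb{R}^k)$ rather than merely to $L^1_{\mathrm{loc}}$. Once this envelope is in place, the remaining steps are routine bookkeeping.
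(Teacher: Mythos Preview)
Your approach is correct and essentially identical to the paper's: express the normalized partial sum as a discrete chaos $Q_k$, prove $L^2(\mathbb{R}^k)$-convergence of the rescaled kernel to $h_t$ by dominated convergence with a Class~(L) envelope, apply Proposition~\ref{Pro:Poly->Wiener} for the f.d.d.\ convergence, and dispose of tightness via $H>1/2$. The one place where the paper is more explicit than your sketch is the envelope step: after rewriting the sum as an integral over $s\in(0,t)$ (plus a boundary remainder $R_N$ handled separately), it uses the pointwise ratio bound
\[
\left(\frac{[Ns]-[Nx]}{N}\right)^{\gamma}\le 2^{-\gamma}(s-x)^{\gamma}\qquad\text{whenever } [Ns]>[Nx]
\]
(noting that $[Ns]>[Nx]$ forces $s>x$) to obtain $\bigl|g\bigl(\tfrac{[Ns]\mathbf{1}-[N\mathbf{x}]}{N}\bigr)\bigr|\mathrm{1}_{\{[Ns\mathbf{1}]>[N\mathbf{x}]\}}\le C\,g^*(s\mathbf{1}-\mathbf{x})\mathrm{1}_{\{s\mathbf{1}>\mathbf{x}\}}$ directly---this inequality is the concrete mechanism behind your ``Riemann-sum/integral comparison via monotonicity,'' and it is what produces an $N$-free envelope in the variable $\mathbf{x}$ (rather than in $[N\mathbf{x}]/N$), which pure monotonicity of $g^*$ does not immediately give.
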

\begin{proof}
Tightness in $D[0,1]$ is standard since $H>1/2$. We only need to show convergence in finite-dimensional distributions. Assume for simplicity that $a(\mathbf{i})=g(\mathbf{i})$ or equivalently $L(\mathbf{i})=1$. The inclusion of a general $L$ can be done  as in the proof of Proposition \ref{Pro:LRD ACF}.  We want to show that
\begin{align}\label{eq:Q_k(h)->I}
\frac{1}{N^H}\sum_{n=1}^{[Nt]} X(n)=\sum'_{(i_1,\ldots,i_k)\in \mathbb{Z}^k}\frac{1}{N^{\alpha+k/2+1}}\sum_{n=1}^{[Nt]}g(n\mathbf{1}-\mathbf{i})\mathrm{1}_{\{n\mathbf{1}>\mathbf{i}\}}\epsilon_{i_1}\ldots\epsilon_{i_k}=:Q_k(h_{t,N})\ConvFDD Z(t),
\end{align}
where $Q_k(\cdot)$ is defined in (\ref{eq:Q_k(h)}). Now in view of Proposition \ref{Pro:Poly->Wiener}, we only need to check that
\begin{equation}\label{eq:h tilde L2 conv h}
\|\tilde{h}_{t,N}(\mathbf{x})-h_{t}(\mathbf{x})\|_{L^2(\mathbb{R}^k)}\rightarrow 0,
\end{equation}
where 
\[h_t(\mathbf{x})=\int_0^t g(s\mathbf{1}-\mathbf{x})\mathrm{1}_{\{s\mathbf{1}>\mathbf{x}\}}ds,\]
 and
\begin{align*}
\tilde{h}_{t,N}(\mathbf{x}):&=N^{k/2} h_{t,N}([N\mathbf{x}]+\mathbf{1})
=\frac{1}{N^{\alpha+1}}\sum_{n=1}^{[Nt]}g(n\mathbf{1}-[N\mathbf{x}]-\mathbf{1})\mathrm{1}_{\{n\mathbf{1}>[N\mathbf{x}]+\mathbf{1}\}}\\&=
\sum_{n=1}^{[Nt]}g\left(\frac{n\mathbf{1}-[N\mathbf{x}]-\mathbf{1}}{N}\right)\mathrm{1}_{\{n\mathbf{1}>[N\mathbf{x}]+\mathbf{1}\}} \frac{1}{N}=\int_0^t g\left(\frac{[Ns\mathbf{1}]-[N\mathbf{x}]}{N}\right)\mathrm{1}_{\{[Ns\mathbf{1}]> [N\mathbf{x}]\}} ds -  R_N(t,\mathbf{x}).
\end{align*}
where 
\[R_N(t,\mathbf{x})=\frac{Nt-[Nt]}{N}g\left(\frac{[Nt\mathbf{1}]-[N\mathbf{x}]}{N}\right)\mathrm{1}_{\{[Nt\mathbf{1}]> [N\mathbf{x}]\}}.\]
 Note that
we have replaced $\mathbf{i}$ by $[N\mathbf{x}]+\mathbf{1}$ and $n$ by $[Ns]+1$. By Condition 2 in Definition \ref{Def:class limit}, there exists a positive generalized Hermite kernel $g^*(\mathbf{x})$ which is a linear combination of the form $\prod_{j=1}^k x_j^{\gamma_j}$, such that $|g(\mathbf{x})|\le g^*(\mathbf{x})$ for a.e.\ $\mathbf{x}\in \mathbb{R}_+^k$.  We assume without loss of generality that $g^*(\mathbf{x})=\prod_{j=1}^k x_j^{\gamma_j}$. Since $[Ns\mathbf{1}]>[N\mathbf{x}]$ implies $s\mathbf{1}>\mathbf{x}$, we have
\begin{equation}\label{eq:g<=g^*}
\left|g\left(\frac{[Ns\mathbf{1}]-[N\mathbf{x}]}{N}\right)\right|\mathrm{1}_{\{[Ns\mathbf{1}]>[N\mathbf{x}]\}}\le
\left(\prod_{j=1}^k\left(\frac{[Ns]-[Nx_j]}{N}\right)^{\gamma_j}\mathrm{1}_{\{[Ns]>[Nx_j]\}}\right)\mathrm{1}_{\{s\mathbf{1}>\mathbf{x}\}} ~a.e..
\end{equation}
Moreover, if $0<[Ns]-[Nx]=  k\in \mathbb{Z}_+$, then $Ns-1-Nx\le k$, and hence $s-x\le \frac{k+1}{N}$. So we have for any $\gamma<0$ that
\begin{align}\label{eq:useful}
\sup_{N\ge 1,[Ns]>[Nx]} \left(\frac{[Ns]-[Nx]}{N}\right)^{\gamma}(s-x)^{-\gamma}\le &
\sup_{N\ge 1,[Ns]-[Nx]=k\ge 1} \left(\frac{k}{N}\right)^{\gamma}(s-x)^{-\gamma}\notag\\
\le& \sup_{N\ge 1,k\ge 1} \left(\frac{k}{N}\right)^{\gamma}\left(\frac{k+1}{N}\right)^{-\gamma} =2^{-\gamma}.
\end{align}
 So we have for some constant $C>0$,
\begin{equation}\label{eq:dominate}
\left|g\left(\frac{[Ns\mathbf{1}]-[N\mathbf{x}]}{N}\right)\right|\mathrm{1}_{\{[Ns\mathbf{1}]>[N\mathbf{x}]\}}\le C g^*(s\mathbf{1}-\mathbf{x})\mathrm{1}_{\{s\mathbf{1}>\mathbf{x}\}}.
\end{equation}
Since $g(\mathbf{x})$ by assumption of Class (L) is continuous a.e.,
$g\left(\frac{[Ns\mathbf{1}]-[N\mathbf{x}]}{N}\right)\mathrm{1}_{\{[Ns\mathbf{1}]>[N\mathbf{x}]\}}$ converges a.e.\ to $g(s\mathbf{1}-\mathbf{x})\mathrm{1}_{\{s\mathbf{1}>\mathbf{x}\}}$ as $N\rightarrow\infty$. In view of (\ref{eq:dominate}), and noting that  $\int_{\mathbb{R}^k}d\mathbf{x}\left(\int_0^t g^*(s\mathbf{1}-\mathbf{x})\mathrm{1}_{\{s\mathbf{1}>\mathbf{x}\}}ds\right)^2<\infty$ because $g^*$ is a generalized Hermite kernel, one then  applies the Dominated Convergence Theorem to conclude the $L^2$ convergence of $\int_0^t g\left(\frac{[Ns\mathbf{1}]-[N\mathbf{x}]}{N}\right)\mathrm{1}_{\{[Ns\mathbf{1}]> [N\mathbf{x}]\}} ds$ to $h_t(\mathbf{x})$. For the remainder term $R_{N,t}(\mathbf{x})$, 
one has
\[
\|R_{N,t}(\mathbf{x})\|_{L^2(\mathbb{R}^k)}^2= N^{-2H}(Nt-[Nt])^2 \sum_{\mathbf{i}>\mathbf{0}}  g\left(\mathbf{i}\right)^2
\rightarrow 0\]
as $N\rightarrow\infty$.
The proof is thus complete. 
\end{proof}

\begin{Eg}
Consider the kernel $g(\mathbf{x})$ defined in (\ref{eq:eg}).
It belongs to Class (L) by Example \ref{eg:L}. Hence by Theorem \ref{Thm:NCLT}, we have the following weak convergence in $D[0,1]$:
\begin{align*}
&\frac{1}{N^{H}}\sum_{n=1}^{[Nt]} \sum'_{(i_1,\ldots,i_k)\in \mathbb{Z}_+^k}   \left( \frac{ \prod_{j=1}^k i_j}{\sum_{j=1}^k i_j^{k-\alpha}}\vee \prod_{j=1}^k i_j^{\alpha/k}\right)  ~\epsilon_{n-i_1}\ldots \epsilon_{n-i_k}
\Rightarrow \\&\int_{\mathbb{R}^k}' ~  \int_0^{t} ~ \left( \frac{ \prod_{j=1}^k (s-x_j)_+}{\sum_{j=1}^k (s-x_j)_+^{k-\alpha}}\right)\vee \left(\prod_{j=1}^k (s-x_j)_+^{\alpha/k}\right) ~ds~   W(dx_1)\ldots W(dx_k),
\end{align*}
where $H=\alpha+k/2+1$.
\end{Eg}

\subsection{Non-central limit theorem with fractional filter }
In the spirit of \citet{rosenblatt:1979:some} and \citet{major:1981:limit}, we consider here the non-central limit theorem for the fractionally filtered generalized Hermite process introduced in Section \ref{Subsec:frac}.
Assume throughout that the generalized Hermite kernel $g$ is of Class (L) (Definition \ref{Def:class limit}).

\begin{Def} \label{Def:fLRD}
Let
$X(n)=\sum_{\mathbf{i}<n\mathbf{1}}' a(n\mathbf{1}-\mathbf{i})\prod_{j=1}^k \epsilon_{i_j}$ be the same discrete chaos process as in Theorem \ref{Thm:NCLT}. We say that a discrete process $U(n)$ is fLRD
(fractionally-filtered LRD discrete chaos process) if
\begin{equation}\label{eq:frac U}
U(n)=\sum_{m=1}^\infty C_m X(n-m)=\sum_{m=-\infty}^{n-1}C_{n-m}\sum_{\mathbf{i}<m\mathbf{1}}' a(m\mathbf{1}-\mathbf{i})\prod_{j=1}^k \epsilon_{i_j},
\end{equation}
where $a(\mathbf{i})=g(\mathbf{i})L(\mathbf{i})$ as in (\ref{eq:a=gL}) with $g$ being a generalized Hermite kernel in Class (L),
$$
C_n\sim c n^{\beta-1}
$$
as $ n\rightarrow\infty,$ and where, as in Proposition \ref{Pro:beta range},
\begin{equation}\label{e:beta3}
\beta\in \left(-\frac{2\alpha+k+2}{2},-\frac{2\alpha+k}{2}\right).
 \end{equation}
 \end{Def}

 $U(n)$ is well-defined in the $L^2(\Omega)$ sense. Indeed, we have the following:
\begin{Lem}\label{Lem:U well defined} We have
\begin{equation*}
\sum_{\mathbf{i}\in\mathbb{Z}^k}' \left(\sum_{m<n} |C_{n-m}a(m\mathbf{1}-\mathbf{i})|\mathrm{1}_{\{m\mathbf{1}>\mathbf{i}\}}\right)^2<\infty.
\end{equation*}
\end{Lem}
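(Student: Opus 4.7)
The strategy is to expand the square, swap sums via Tonelli, and reduce the spatial sum over $\mathbf{i}$ to a one-variable quantity of the same type as the covariance sum in Proposition \ref{Pro:LRD ACF}; convergence of the remaining double sum in the filter indices is then a discrete analog of the convergence analysis in the proof of Proposition \ref{Pro:beta range}.

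Denote the left-hand side by $S_n$. I would expand $(\cdot)^2$, interchange $\sum'_{\mathbf{i}}$ with $\sum_{m_1,m_2<n}$ by Tonelli, restrict to $m_1\le m_2$ at the cost of a factor $2$, and change variables $\mathbf{j}=m_1\mathbf{1}-\mathbf{i}>\mathbf{0}$, $r=m_2-m_1\ge 0$ (the second indicator $\{m_2\mathbf{1}>\mathbf{i}\}$ becomes automatic) to obtain
\begin{equation*}
S_n\;\le\;2\sum_{m_1\le m_2<n}|C_{n-m_1}C_{n-m_2}|\,T_{m_2-m_1},\qquad T_r:=\sum'_{\mathbf{j}>\mathbf{0}}|a(\mathbf{j})\,a(r\mathbf{1}+\mathbf{j})|.
\end{equation*}

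The Class (L) bound $|a(\mathbf{j})|\le Mg^*(\mathbf{j})$, with $g^*$ a finite linear combination of non-symmetric Hermite kernels $\prod_j x_j^{\gamma_j}$, $\gamma_j\in(-1,-1/2)$, gives $T_0<\infty$ (each $2\gamma_j<-1$), and for $r\ge 1$ the Riemann-sum argument in the proof of Proposition \ref{Pro:LRD ACF}, applied to $g^*$ in place of $g$, yields $T_r\le Kr^{2\alpha+k}$. Hence $T_r\le K(1+r)^{2\alpha+k}$ uniformly. Reindexing by $u=n-m_1\ge v=n-m_2\ge 1$ and inserting $|C_u|\le Cu^{\beta-1}$ (the finitely many small-$u$ contributions are harmless) reduces the lemma to
\begin{equation*}
\Sigma\;:=\;\sum_{r=0}^\infty(1+r)^{2\alpha+k}\sum_{v=1}^\infty v^{\beta-1}(v+r)^{\beta-1}\;<\;\infty.
\end{equation*}

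This is the discrete counterpart of the integral controlled in the proof of Proposition \ref{Pro:beta range}. My plan is to split the inner sum at $v=r$, using $(v+r)^{\beta-1}\le r^{\beta-1}$ for $v\le r$ and $(v+r)^{\beta-1}\le v^{\beta-1}$ for $v>r$ (both valid because $\beta-1<0$, and $2\beta-2<-1$ ensures tail convergence of the second part). The inner sum is $O(r^{2\beta-1})$ when $\beta>0$ and $O(r^{\beta-1})$ when $\beta<0$ as $r\to\infty$; combined with the outer weight, convergence of $\Sigma$ reduces to $\beta<-\alpha-k/2$ in the first case and $\beta<-2\alpha-k$ in the second, both of which follow from the range (\ref{e:beta3}) (the second via $\beta<-\alpha-k/2<-2\alpha-k$, using $\alpha<-k/2$). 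Convergence at $r=0$ follows from $2\alpha+k>-1$.

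The main obstacle is this last step: the sign of $\beta$ is not fixed in the admissible range, so the inner sum has genuinely different asymptotics in the two regimes, and one must verify in each case that the range (\ref{e:beta3}) is sharp enough to push the outer exponent strictly below $-1$.
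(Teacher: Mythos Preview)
Your proof is correct and takes a genuinely different route from the paper's.

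The paper bounds the squared expression directly without expanding it. After the pointwise bound $|C_{n-m}|\le c(-m)^{\beta-1}$ and $|a|\le g^*$ (setting $n=-1$ for convenience), the key observation is a monotonicity trick: since $(-m)^{\beta'-1}\le(-m)^{\beta-1}$ for all $m<-1$ whenever $\beta'\le 0<\beta$, it suffices to treat the case $\beta>0$. The sum is then rewritten as an integral, bounded via the inequality established in the proof of Theorem \ref{Thm:NCLT}, and reduced to exactly the integral computed in the proof of Proposition \ref{Pro:beta range} (for the case $\beta>0$). No case split on the sign of $\beta$ is needed.

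Your approach instead expands the square and separates the spatial and filter sums, factoring the problem into the covariance-type quantity $T_r$ and a purely one-dimensional double sum in the filter indices. This is more elementary (no passage to integrals is required) and makes transparent that the result is really a combination of Proposition \ref{Pro:LRD ACF} with a discrete version of the convolution estimate behind Proposition \ref{Pro:beta range}. The price is the case split on $\mathrm{sign}(\beta)$ that you correctly identify as the main obstacle; the paper's monotonicity trick $(-m)^{\beta'-1}\le(-m)^{\beta-1}$ collapses this split, which is worth noting as a shortcut. One minor point: your final remark ``convergence at $r=0$ follows from $2\alpha+k>-1$'' is a vestige of the continuous analog; in the discrete sum the $r=0$ term is just a single finite term $\sum_{v\ge1}v^{2\beta-2}$, and $2\alpha+k>-1$ plays no role there.
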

\begin{proof}
Note that $a(\cdot)=g(\cdot)L(\cdot)$, where $g$ is of Class (L). So by Definition \ref{Def:class limit}, there exists a
 $g^*(\mathbf{x})>0$  which is a finite linear combination of the form $\prod_{j=1}^k x_j^{\gamma_j}$, such that $|g(\mathbf{x})|<g^*(\mathbf{x})$. Note that $L$ is bounded and $|C_n|\le cn^{\beta-1}$. Set $n=-1$ without loss of generality due to stationarity. We hence need to show that
\begin{equation}\label{eq:square sum frac}
\sum_{\mathbf{i}\in\mathbb{Z}^k} \left(\sum_{m<-1} (-m)^{\beta-1} g^*(m\mathbf{1}-\mathbf{i})1_{\{m\mathbf{1}>\mathbf{i}\}}\right)^2<\infty.
\end{equation}
It suffices to show this when $\beta>0$, since for any $\beta'\le 0$ and $\beta>0$,  $(-m)^{\beta'-1}\le  (-m)^{\beta-1}$ for all $m<-1$.
The preceding sum can be rewritten as an integral by replacing $m$ by $[s]$ and $\mathbf{i}$ by $[\mathbf{x}]$:
\begin{align}\label{eq:int approx sum}
\int_{\mathbb{R}^k}1_{D^c} d\mathbf{x}\left( \int_{-\infty}^{-1} ds (-[s])^{\beta-1} g^*([s\mathbf{1}]-[\mathbf{x}])1_{\{[s\mathbf{1}]>[\mathbf{x}]\}} \right)^2,
\end{align}
where $D^c=\{\mathbf{x}\in \mathbb{R}^k:~ [x_p]\neq [x_q], ~p\neq q\}$. By $[s]\le s$, $\beta-1<0$, and
(\ref{eq:dominate}),  (\ref{eq:int approx sum}) is bounded by (up to a constant)
\begin{align*}
&\int_{\mathbb{R}^k} d\mathbf{x}\left( \int_{-\infty}^{-1} ds (-s)_+^{\beta-1} g^*(s\mathbf{1}-\mathbf{x})1_{\{s\mathbf{1}>\mathbf{x}\}} \right)^2\\
=&\int_{-\infty}^{-1} ds(-s)^{\beta-1}\int_{0}^{-s}du(-s-u)^{\beta-1}u^{2\alpha+k} \int_{\mathbb{R}_+^k}d\mathbf{y}g^*(\mathbf{y})g^*(\mathbf{1}+\mathbf{y})\\
=& \int_{1}^{\infty}s^{2\alpha+2\beta+k-1}ds ~\mathrm{B}(\beta,2\alpha+k+1) ~ C_{g^*}<\infty,
\end{align*}
where we have used a change of variable similar to the lines below (\ref{eq:int h^beta_t}), and in addition the assumptions $\beta>0$, $2\alpha+k>-1$,  $2\alpha+2\beta+k<0$, and $g^*$ is a  generalized Hermite kernel.

\end{proof}
\begin{Rem}
Lemma \ref{Lem:U well defined} not only shows that $U(n)$ is well-defined in $L^2(\Omega)$, it also allows changing the order of summations, which will be used in proving the non-central limit theorem below.
\end{Rem}

Next we want to obtain  non-central limit theorems, that is, to show that the suitably normalized partial sum of $U(n)$ defined in (\ref{eq:frac U}) converges to the fractionally-filtered generalized Hermite process introduced in Section \ref{Subsec:frac}.  We need to distinguish two cases: $\beta>0$ (which increases $H$) and $\beta<0$ (which decreases $H$).

We first consider $\beta>0$:
\begin{Thm}\label{Thm:frac non central beta>0}
Let $U(n)$ be as in (\ref{eq:frac U}) with $\beta\in (0,-\alpha-k/2)$. Then
\begin{align*}
\frac{1}{N^{H}}\sum_{n=1}^{[Nt]} U(n) \Rightarrow Z^\beta(t),
\end{align*}
where
$$1/2<\alpha+k/2+1<H=\alpha+\beta+k/2+1<1,$$
 and $Z^\beta(t)$ is the fractionally-filtered generalized Hermite process defined in Theorem \ref{Thm:Frac Z beta hsssi}. It is defined using the same $g$ and $\beta$ as $U(n)$.
\end{Thm}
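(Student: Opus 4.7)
The plan is to mimic the proof of Theorem \ref{Thm:NCLT} by recasting the normalized partial sum as a discrete chaos $Q_k(\cdot)$ whose rescaled kernel converges in $L^2(\mathbb{R}^k)$ to the kernel $h_t^\beta$ of $Z^\beta(t)$, then invoke Proposition \ref{Pro:Poly->Wiener} for f.d.d.\ convergence and a hypercontractivity-based moment estimate for tightness. The first step is to interchange summations (valid by Lemma \ref{Lem:U well defined}) to write
\[
\sum_{n=1}^{[Nt]}U(n) \;=\; \sum_{\mathbf{i}\in \mathbb{Z}^k}{}' H_{t,N}(\mathbf{i})\, \epsilon_{i_1}\ldots\epsilon_{i_k}, \qquad H_{t,N}(\mathbf{i})= \sum_{m>\max_j i_j,\ m\le [Nt]-1} \Psi_{N,t}(m)\, a(m\mathbf{1}-\mathbf{i}),
\]
where $\Psi_{N,t}(m):=\sum_{n=\max(1,m+1)}^{[Nt]}C_{n-m}$. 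Because $C_k\sim c k^{\beta-1}$ with $\beta>0$, Riemann-sum comparison gives $N^{-\beta}\Psi_{N,t}(m)\to c\, l^\beta_t(m/N)$ in a pointwise-plus-dominated sense, while the factor $a(m\mathbf{1}-\mathbf{i})=g(m\mathbf{1}-\mathbf{i})L(m\mathbf{1}-\mathbf{i})$ is already the correct building block of the generalized Hermite process.

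Next I would rescale. Setting $\tilde H_{t,N}(\mathbf{x})=N^{k/2}N^{-H}H_{t,N}([N\mathbf{x}]+\mathbf{1})$ and using the homogeneity $g(N\cdot)=N^\alpha g(\cdot)$ together with $\Psi_{N,t}(m)\approx N^\beta \, c\, l^\beta_t(m/N)$, the prefactors conspire to $N^{k/2-H+\alpha+\beta}=N^{-1}$, so the sum over $m$ becomes a Riemann approximation to the integral representation
\[
h_t^\beta(\mathbf{x})=\int_0^t dr\int_\mathbb{R} ds\, (r-s)_+^{\beta-1}g(s\mathbf{1}-\mathbf{x})\mathrm{1}_{\{s\mathbf{1}>\mathbf{x}\}},
\]
available from (\ref{eq:h_t beta>0}). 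The target statement is that $\tilde H_{t,N}\to c\,h_t^\beta$ in $L^2(\mathbb{R}^k)$ (the constant $c$ being absorbed into the normalization). Given this, Proposition \ref{Pro:Poly->Wiener} with shift $\mathbf{c}=\mathbf{1}$ yields convergence in finite-dimensional distributions to $c\,I_k(h_t^\beta)=Z^\beta(t)$ up to the multiplicative constant.

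The main obstacle will be this $L^2$ convergence, since we must dominate the discrete convolution uniformly in $N$ by an $L^2$ function. To do so I would exploit Class (L): bound $|g|\le g^*$ with $g^*(\mathbf{x})=\prod_j x_j^{\gamma_j}$, use $|C_n|\le C n^{\beta-1}$ (valid for $n$ large, with boundedness absorbing finitely many indices), and combine with estimate (\ref{eq:useful}) to obtain the pointwise bound
\[
|\tilde H_{t,N}(\mathbf{x})|\;\le\; C\int_{-\infty}^t l^\beta_t(s)\, g^*(s\mathbf{1}-\mathbf{x})\mathrm{1}_{\{s\mathbf{1}>\mathbf{x}\}}\,ds,
\]
whose $L^2(\mathbb{R}^k)$-norm is finite by the very computation in Proposition \ref{Pro:beta range} applied to $g^*$. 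Pointwise a.e.\ convergence of the integrand uses continuity a.e.\ of $g$ (Class (L)) and the regular variation $C_n\sim c n^{\beta-1}$, and the Dominated Convergence Theorem then closes the argument. Finally, for tightness in $D[0,1]$: since $H>1/2$ and $U$ has stationary increments, I would pick $p>2$ with $\E|\epsilon_i|^p<\infty$ (or reduce to truncated kernels as in Theorem \ref{Thm:CLTWeak}), apply Proposition \ref{Pro:Hypercontract} to get $\E|Y_N(t)-Y_N(s)|^p\le C\big(\E|Y_N(t)-Y_N(s)|^2\big)^{p/2}$, bound the second moment by $C(t-s)^{2H}+o(1)$ via the same kernel computation, and conclude tightness from a Kolmogorov-type criterion (Lemma 4.4.1 and Theorem 4.4.1 of Giraitis--Koul--Surgailis) since $pH>1$.
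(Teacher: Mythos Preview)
Your proof of finite-dimensional convergence is correct and essentially the same as the paper's: both interchange summations via Lemma~\ref{Lem:U well defined}, rewrite the normalized partial sum as a discrete chaos $Q_k(h^\beta_{t,N})$, rescale to $\tilde h^\beta_{t,N}$, and apply the Dominated Convergence Theorem with the Class~(L) majorant $g^*$ and the bound~(\ref{eq:useful}) to get $L^2$ convergence to $h_t^\beta$, hence Proposition~\ref{Pro:Poly->Wiener}. There is a minor stylistic difference: you first sum over $n$ to form $\Psi_{N,t}(m)=\sum_n C_{n-m}$ and then argue $N^{-\beta}\Psi_{N,t}(m)\to c\,l^\beta_t(m/N)$, arriving at the single-integral form $h_t^\beta(\mathbf{x})=\int l^\beta_t(s)g(s\mathbf{1}-\mathbf{x})\mathrm{1}_{\{s\mathbf{1}>\mathbf{x}\}}ds$; the paper instead converts both the $n$-sum and the $m$-sum simultaneously into Riemann integrals, landing directly on the double-integral form~(\ref{eq:h_t beta>0}). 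Both routes are valid and the domination argument is the same.

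Your tightness argument, however, is more than you need and actually strays from the theorem's hypotheses. The statement does not assume $\E|\epsilon_i|^p<\infty$ for any $p>2$, so invoking Proposition~\ref{Pro:Hypercontract} would add a hypothesis the theorem does not have. The paper simply notes that since $H>1/2$, tightness is standard: the $L^2$ domination you already established gives the uniform second-moment bound $\E|Y_N(t)-Y_N(s)|^2\le C\,|F_N(t)-F_N(s)|^{2H}$ with $F_N(t)=[Nt]/N$, and because $2H>1$ the Chentsov-type criterion (Lemma~4.4.1 and Theorem~4.4.1 of \citet{giraitis:koul:surgailis:2009:large}) applies with $p=2$ directly. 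You should drop the hypercontractivity step here; it is only needed when $H<1/2$, as in Theorem~\ref{Thm:Frac NCLT beta<0}.
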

\begin{proof} Since $H>1/2$, tightness in $D[0,1]$ is standard. We now show convergence in finite-dimensional distributions.
Assume for simplicity that $C_m=m^{\beta-1}$ and $L(\mathbf{i})=1$.  By Lemma \ref{Lem:U well defined}, we are able to change the order of the summations to write:
\begin{align*}
\frac{1}{N^H}\sum_{n=1}^{[Nt]}U(n)=\sum_{\mathbf{i}\in \mathbb{Z}^k}' \frac{1}{N^H}\sum_{n=1}^{[Nt]}\sum_{m<n} (n-m)^{\beta-1} g(m\mathbf{1}-\mathbf{i})\mathrm{1}_{\{m\mathbf{1}>\mathbf{i}\}}\prod_{j=1}^k \epsilon_{i_j}=\sum_{\mathbf{i}\in \mathbb{Z}^k} h_{t,N}^\beta(\mathbf{i})\prod_{j=1}^k \epsilon_{i_j} =Q_k(h^\beta_{t,N}),
\end{align*}
and by setting $\tilde{h}^\beta_{t,N}(\mathbf{x})=N^{k/2}h^\beta_{t,N}([N\mathbf{x}]+\mathbf{1})$, we have
\begin{align*}
\tilde{h}^\beta_{t,N}(\mathbf{x})=&\frac{1}{N^{\alpha+\beta+1}}\sum_{n=1}^{[Nt]} \sum_{m<n}(n-m)^{\beta-1}g\left(m\mathbf{1}-[N\mathbf{x}]-\mathbf{1}\right)\mathrm{1}_{\{m\mathbf{1}>[N\mathbf{x}]-\mathbf{1}\}}\\=&
\sum_{n=1}^{[Nt]} \sum_{m<n} \left(\frac{n-m}{N}\right)^{\beta-1} g\left(\frac{m\mathbf{1}-[N\mathbf{x}]-\mathbf{1}}{N}\right)\mathrm{1}_{\{m\mathbf{1}>[N\mathbf{x}]-\mathbf{1}\}} \frac{1}{N^2}
\\=&\int_0^t ds \int_\mathbb{R} dr  \left(\frac{[Ns]-[Nr]}{N}\right)_+^{\beta-1}g\left(\frac{[Nr\mathbf{1}]-[N\mathbf{x}]}{N}\right)\mathrm{1}_{\{[Nr\mathbf{1}]> [N\mathbf{x}]\}}-R_{N,t}(\mathbf{x})
\\=&:\int_0^t ds \int_\mathbb{R} dr G_{N}(s,r,\mathbf{x})\mathrm{1}_{K_N}-R_{N,t}(\mathbf{x})
\end{align*}
where we associate $\mathbf{i}$ with $[N\mathbf{x}]+\mathbf{1}$, $n$  with $[Ns]+1$, and $m$  with $[Nr]+1$,
$$
G_N(s,r,\mathbf{x}):=
\left(\frac{[Ns]-[Nr]}{N}\right)^{\beta-1}g\left(\frac{[Nr\mathbf{1}]-[N\mathbf{x}]}{N}\right),
$$
 $$
 K_N=\{[Ns]> [Nr],[Nr\mathbf{1}]> [N\mathbf{x}]\}\subset\{s>r,r\mathbf{1}>\mathbf{x}\},
  $$
and
\[
R_{N,t}(\mathbf{x})=\frac{Nt-[Nt]}{N}\int_\mathbb{R} dr  \left(\frac{[Nt]-[Nr]}{N}\right)_+^{\beta-1}g\left(\frac{[Nr\mathbf{1}]-[N\mathbf{x}]}{N}\right)\mathrm{1}_{\{[Nr\mathbf{1}]> [N\mathbf{x}]\}}.
\]
  In view of Proposition \ref{Pro:Poly->Wiener}, we need to show that $\tilde{h}^\beta_{t,N}\rightarrow h^\beta_t$ and $R_{N,t}\rightarrow 0$ in $L^2(\mathbb{R}^k)$, where
  $$
  h_t^\beta(\mathbf{x}):=\int_0^tds\int_\mathbb{R} dr (s-r)_+^{\beta-1}g(r\mathbf{1}-\mathbf{x})\mathrm{1}_{\{r\mathbf{1}>\mathbf{x}\}}.
  $$
Using (\ref{eq:g<=g^*}) and (\ref{eq:useful}) (note that $\beta-1<0$) as in the proof of Theorem \ref{Thm:NCLT}, we can bound the integrand as
\[
|G_{N}(s,r,\mathbf{x})|\mathrm{1}_{K_N}\le C (s-r)_+^{\beta-1}g^*(r\mathbf{1}-\mathbf{x})\mathrm{1}_{\{r\mathbf{1}>\mathbf{x}\}}
\]
for some $C>0$, where $g^*(\mathbf{x})$ is a generalized Hermite kernel from Definition \ref{Def:class limit}. Because
$$
h^*(\mathbf{x}):=(s-r)_+^{\beta-1}g^*(r\mathbf{1}-\mathbf{x})\mathrm{1}_{\{r\mathbf{1}>\mathbf{x}\}} \in L^2(\mathbb{R}^k)
 $$
 by (\ref{eq:h_t beta>0}) and Proposition \ref{Pro:beta range}, and $g$ is a.e.\ continuous, it remains to apply the Dominated Convergence Theorem to conclude $\tilde{h}^\beta_{t,N}\rightarrow h^\beta_t$. For the remainder term $R_{N,t}(\mathbf{x})$, one has
\begin{align*}
\|R_{N,t}(\mathbf{x})\|_{L^2(\mathbb{R}^k)}^2=N^{-2H} (Nt-[Nt])   \sum_{\mathbf{i}\in \mathbb{Z}^k}\left(\sum_{m<[Nt]}([Nt]-m)^{\beta-1}g(m\mathbf{1}-\mathbf{i})1_{\{m\mathbf{1}>\mathbf{i}\}}\right)^{2},
\end{align*}
which, in view of (\ref{eq:square sum frac}), converges to $0$ as $N\rightarrow\infty$. The proof is thus complete.
\end{proof}

We now treat the case $\beta<0$. This case is more delicate than the case $\beta>0$ in two ways: a) an additional assumption on the linear-filter response $\{C_n\}$ has to be made; b) if $\beta$ is chosen such that $H<1/2$, then tightness of the normalized partial sum process needs also additional assumptions.

When $\beta<0$, we have
$$
\sum_{n=1}^\infty |C_n|<\infty.
 $$
 If $f_X$ is the spectral density of $\{X(n)\}$, then the spectral density of $\{U(n)\}$ is
 $$
 f_U(\lambda)=|C(e^{i\lambda})|^2 f_X(\lambda),
  $$
  where $C(z):=\sum_n C_nz^n$,  and the transfer function $H(\lambda):=|C(e^{i\lambda})|^2$ is continuous. Since $X(n)$ is LRD (see Proposition \ref{Pro:LRD ACF}), its spectral density blows up at the origin. To dampen it we need to multiply it by an $H(\lambda)$ which converges to $0$ as $\lambda\rightarrow 0$. This means that $H(0)=|\sum_{n=1}^\infty C_n|^2=0$, and hence we need to assume $\sum_{n=1}^\infty C_n=0$.

\begin{Thm}\label{Thm:Frac NCLT beta<0}
Let $U(n)$ be as in (\ref{eq:frac U}) with $\beta\in (-\alpha-k/2-1,0)$, and assume in addition that
\begin{equation}\label{eq:sum C_n=0}
\sum_{n=1}^\infty C_n=0.
\end{equation}
Then
\begin{align*}
\frac{1}{N^{H}}\sum_{n=1}^{[Nt]} U(n) \ConvFDD Z^\beta(t),
\end{align*}
where
\[
0<H=\alpha+\beta+k/2+1<\alpha+k/2+1<1,
\]
$Z^\beta(t)$ is the fractionally-filtered generalized Hermite process defined in Theorem \ref{Thm:Frac Z beta hsssi}. It  is defined using the same $g$ and $\beta$ as $U(n)$.

If in addition, either a) $H>1/2$, or b) $H<1/2$ and for some $p>1/H$, $\mathbb{E}|\epsilon_i|^p<\infty$, then the above $\ConvFDD$ can be replaced with weak convergence  in $D[0,1]$.
\end{Thm}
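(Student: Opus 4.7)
The plan is to parallel the proof of Theorem \ref{Thm:frac non central beta>0}, with the main new ingredient being a summation--by--parts based on the cancellation assumption $\sum_{n=1}^\infty C_n = 0$. Assume for simplicity that $L\equiv 1$ and that $C_n = cn^{\beta-1}$ exactly, since the general case follows from routine approximation.

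First, using Lemma \ref{Lem:U well defined} to justify interchanging summations, I would write
\[
\frac{1}{N^H}\sum_{n=1}^{[Nt]}U(n)=\sum_{\mathbf{i}\in\mathbb{Z}^k}{}'h_{t,N}^\beta(\mathbf{i})\prod_{j=1}^k\epsilon_{i_j}=:Q_k(h_{t,N}^\beta),
\]
with
\[
h_{t,N}^\beta(\mathbf{i})=\frac{1}{N^H}\sum_{m\mathbf{1}>\mathbf{i}}g(m\mathbf{1}-\mathbf{i})\,K_{t,N}(m),\qquad K_{t,N}(m):=\sum_{n=\max(1,m+1)}^{[Nt]}C_{n-m}.
\]
The main new step is evaluating $K_{t,N}(m)$ via the identity $\sum_{j=1}^\infty C_j=0$. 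Writing $\Psi(k):=\sum_{j=k}^\infty C_j$ for $k\ge 1$, one obtains
\[
K_{t,N}(m)=-\Psi(([Nt]-m+1)_+)+\Psi((-m+1)_+)\,\mathrm{1}_{\{m\le 0\}}.
\]
Since $\beta<0$ the tail is absolutely convergent, and $\Psi(k)\sim -(c/\beta)\,k^\beta$ as $k\to\infty$. Therefore
\[
K_{t,N}(m)=\frac{c}{\beta}\bigl[([Nt]-m)_+^\beta-(-m)_+^\beta\bigr]+o(N^\beta)=c\,N^\beta\,l_t^\beta(m/N)+o(N^\beta),
\]
uniformly in $m$ on appropriate sets, with $l_t^\beta$ as in (\ref{eq:l^beta_t}). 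This identifies the correct scale and limiting shape.

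Next I would set $\tilde{h}_{t,N}^\beta(\mathbf{x}):=N^{k/2}h_{t,N}^\beta([N\mathbf{x}]+\mathbf{1})$ and show $\tilde{h}_{t,N}^\beta\to c\,h_t^\beta$ in $L^2(\mathbb{R}^k)$, where $h_t^\beta$ is the kernel of $Z^\beta(t)$ from (\ref{eq:h^beta_t}). Pointwise a.e.\ convergence follows from the homogeneity of $g$ (a factor $N^\alpha$), the bound $|g(\cdot)|\le g^*(\cdot)$ of Definition \ref{Def:class limit}, the a.e.\ continuity of $g$, and the tail asymptotics for $\Psi$ just established. The key quantitative estimate to obtain is a uniform domination
\[
|\tilde{h}_{t,N}^\beta(\mathbf{x})|\le C\int_{\mathbb{R}}|l_t^\beta(s)|\,g^*(s\mathbf{1}-\mathbf{x})\mathrm{1}_{\{s\mathbf{1}>\mathbf{x}\}}\,ds,
\]
together with an analogue of (\ref{eq:useful}) to compare $([Ns]-[Nx])/N$ with $s-x$. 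The RHS lies in $L^2(\mathbb{R}^k)$ by the proof of Proposition \ref{Pro:beta range} applied to $g^*$, so the Dominated Convergence Theorem yields the desired $L^2$ convergence, and Proposition \ref{Pro:Poly->Wiener} then gives $Q_k(h_{t,N}^\beta)\ConvFDD I_k(c\,h_t^\beta)=Z^\beta(t)$ (absorbing $c$ into the constant of the definition).

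For tightness in $D[0,1]$: when $H>1/2$ the standard moment-criterion argument using $\E|Y_N(t)-Y_N(s)|^2=O(|t-s|^{2H})$ (direct from the computation above applied to increments) suffices. When $H<1/2$, the Kolmogorov criterion requires $\E|Y_N(t)-Y_N(s)|^p\le C|t-s|^{pH}$ with $pH>1$. The bound on the second moment is again clear, and Proposition \ref{Pro:Hypercontract} (applicable since $\E|\epsilon_i|^p<\infty$ and the partial sum is a discrete chaos of fixed order $k$) upgrades this to the $p$-th moment, yielding tightness via Theorem 12.3 of Billingsley. \emph{The main obstacle} is the joint handling of the two endpoint singularities of $l_t^\beta$ near $s=0$ and $s=t$: an absolute-value bound on $K_{t,N}(m)$ would diverge, so the cancellation encoded in $\sum C_n=0$ has to be built into the dominating function, which is precisely what the tail representation via $\Psi$ achieves.
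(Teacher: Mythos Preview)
Your proposal is correct and follows essentially the same route as the paper: rewrite the partial sum as $Q_k(h_{t,N}^\beta)$ via Lemma \ref{Lem:U well defined}, use $\sum_n C_n=0$ to convert the inner sum $K_{t,N}(m)$ into a difference of tail sums behaving like $\beta^{-1}[([Nt]-m)_+^\beta-(-m)_+^\beta]$, dominate by $C\,g^*(s\mathbf{1}-\mathbf{x})\mathrm{1}_{\{s\mathbf{1}>\mathbf{x}\}}\bigl((t-s)_+^\beta-(-s)_+^\beta\bigr)$ to apply DCT and Proposition \ref{Pro:Poly->Wiener}, and then use Proposition \ref{Pro:Hypercontract} for tightness when $H<1/2$. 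The only cosmetic differences are that the paper writes the tail asymptotic via a generic function $l(\cdot)\to 1$ rather than your $\Psi$, and in the tightness step it bounds increments by $|F_N(t)-F_N(s)|^{p'H}$ with $F_N(t)=[Nt]/N$ (invoking Lemma 4.4.1 and Theorem 4.4.1 of \citet{giraitis:koul:surgailis:2009:large}) rather than directly $|t-s|^{pH}$; you should also take $p'<p$ with $p'H>1$ so that Proposition \ref{Pro:Hypercontract} applies under the moment assumption $\E|\epsilon_i|^p<\infty$.
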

\begin{proof}
Note that by Lemma \ref{Lem:U well defined},  we can change the order of summations to write:
\begin{align*}
Y_N(t):&=\frac{1}{N^H}\sum_{n=1}^{[Nt]}U(n)=
\sum_{\mathbf{i}\in \mathbb{Z}^k}'\frac{1}{N^{H}}\sum_{n=1}^{[Nt]}  \sum_{m<n}C_{n-m}\sum_{\mathbf{i}<m\mathbf{1}}' a(m\mathbf{1}-\mathbf{i})\prod_{j=1}^k \epsilon_{i_j}
\\&=\sum_{\mathbf{i}\in \mathbb{Z}^k}' \frac{1}{N^H}\sum_{m\in \mathbb{Z}}a(m\mathbf{1}-\mathbf{i})\mathrm{1}_{\{m\mathbf{1}>\mathbf{i}\}} \sum_{n=1\vee (m+1)}^{[Nt]}C_{n-m} \prod_{j=1}^k \epsilon_{i_j}=Q_k(h^\beta_{t,N}),
\end{align*}
where
\[
h_{t,N}^\beta(\mathbf{i})=\frac{1}{N^H}\sum_{m\in \mathbb{Z}}a(m\mathbf{1}-\mathbf{i})\mathrm{1}_{\{m\mathbf{1}>\mathbf{i}\}} \sum_{n=1\vee (m+1)}^{[Nt]}C_{n-m}.
\]
Making use of (\ref{eq:sum C_n=0}), and using  $l$ to denote a generic  function such that  $l(i)\rightarrow 1$ as $i\rightarrow\infty$, we have
if $m\ge 1$,
\[
\sum_{n=1\vee (m+1)}^{[Nt]}C_{n-m}=\sum_{n=1}^{[Nt]-m}C_n=-\sum_{n=[Nt]-m+1}^\infty C_n=  \beta^{-1}l([Nt]-m+1)([Nt]-m+1)_+^{\beta};
\]
and if $m\le 0$,
\begin{align*}
\sum_{n=1\vee (m+1)}^{[Nt]}C_{n-m}= &\sum_{n=1}^{[Nt]}C_{n-m}=\sum_{n=-m+1}^{[Nt]-m}C_n= \sum_{n=[Nt]-m+1}^\infty C_n
-\sum_{n=-m+1}^{\infty} C_n\\= &\beta^{-1}\left[l([Nt]-m+1)([Nt]-m+1)_+^\beta-l(-m)(-m)_+^\beta \right].
\end{align*}
So  by letting $\mathbf{i}$ correspond to $[N\mathbf{x}]+\mathbf{1}$ and $m$ to $[Ns]+1$ (omitting $L$ and $l$ for simplicity),
\begin{align*}
\tilde{h}^\beta_{t,N}(\mathbf{x})&=N^{k/2} h^\beta_{t,N}([N\mathbf{x}]+\mathbf{1})
\\&=\beta^{-1}\int_\mathbb{R}g\left(\frac{[Ns]\mathbf{1}-[N\mathbf{x}]}{N}\right)\mathrm{1}_{\{[Ns]\mathbf{1}>[N\mathbf{x}]\}}\left(\left(\frac{[Nt]-[Ns]}{N}\right)^\beta_+ - \left(\frac{-[Ns]-1}{{N}}\right)^\beta_+\right) ds.
\end{align*}
Using similar arguments as in the proof of Theorem \ref{Thm:NCLT}, we can bound the absolute value of the integrand above by $Cg^*(s\mathbf{1}-\mathbf{x})\mathrm{1}_{\{s\mathbf{1}>\mathbf{x}\}}\left((t-s)_+^\beta- (-s)_+^\beta\right)$ for some $C>0$, where $g^*$ is a generalized Hermite kernel from Definition \ref{Def:class limit} (for the last term, we use $[Ns]+1\ge Ns$). Note that $\beta<0$ in this case. By applying the Dominated Convergence Theorem,  we get the desired f.d.d.\ convergence using Proposition \ref{Pro:Poly->Wiener}.

Now we turn to the weak convergence. When
$H>1/2$, the tightness is standard.  To show tightness under condition $H<1/2$ and $\E |\epsilon_i|^p<\infty$,   Proposition \ref{Pro:Hypercontract} and  the above  f.d.d.\ convergence imply that for some  constant $c,C>0$ free from $s,t$ and $N$,
\[
\E |Y_N(t)-Y_N(s)|^{p'} \le c\E [|Y_N(t)-Y_N(s)|^2]^{p'/2} \le C |F_N(t)-F_N(s)|^{p'H},
\]
where $F_N(t)=[Nt]/N$, $p'<p$ and $p'H>1$.  Now by  Lemma 4.4.1 and Theorem 4.4.1 of \citet{giraitis:koul:surgailis:2009:large}, we conclude that tightness holds.
\end{proof}

\subsection{Mixed multivariate limit theorem}
In \citet{bai:taqqu:2013:multivariate}, a multivariate version of Theorem \ref{Thm:Polyform} is obtained, where both central and non-central convergence  appear simultaneously. We will state here a similar theorem.

Suppose that $\mathbf{X}(n)=\left(X_{1}(n),\ldots,X_{J}(n)\right)$ is a vector of discrete chaos process defined on the same noise but with different coefficients, that is,
\begin{equation}\label{eq:X_j(n)}
X_{j}(n)=\sum'_{0<i_1,\ldots,i_{k_j}<\infty} a_j(i_1,\ldots,i_{k_j}) \epsilon_{n-i_1}\ldots\epsilon_{n-i_{k_j}}=\sum'_{\mathbf{i}>\mathbf{0}} a_j(\mathbf{i}) \prod_{p=1}^{k_j}\epsilon_{n-i_p},
\end{equation}
where we assume  $\{\epsilon_i\}$  is an i.i.d.\ random sequence  with mean $0$ and variance $1$. For convenience we let  $a_j(i_1,\ldots,i_{k_j})=a_j(\mathbf{i})=a_j(\mathbf{i})\mathrm{1}_{\{\mathbf{i}>\mathbf{0}\}}$, and $\tilde{a}_j(\cdot)$ denotes the symmetrization of $a_j(\cdot)$.

\begin{Def}\label{Def:SRD LRD Multi}
We say that the vector sequence of discrete chaos processes $\{\mathbf{X}(n)\}$ is
\begin{itemize}
\item SRD, if every component $X_j(n)$ is SRD in the sense of  Definition \ref{Def:SRD LRD}, and in addition, for any $p\neq q\in \{1,\ldots,J\}$,
\begin{equation}\label{eq:cross cov bound}
\sum_{n=-\infty}^{\infty} \sum_{\mathbf{i}>\mathbf{0}}' |\tilde{a}_p(\mathbf{i})\tilde{a}_q(n\mathbf{1}+\mathbf{i})|<\infty;
\end{equation}
\item LRD, if every component $X_j(n)$ is  LRD in the sense of Definition \ref{Def:SRD LRD}.
\item fLRD, if every component $X_j(n)$ is a fractionally-filtered LRD discrete chaos process
in the sense of Definition \ref{Def:fLRD}. Note: these components were denoted $U(n)$ in that definition.
\end{itemize}
\end{Def}
\begin{Rem}
 If the vector sequence is SRD,  then (\ref{eq:cross cov bound}) guarantees that the cross-covariance $\gamma_{p,q}(n):=\Cov(X_p(n),X_q(0))$ satisfies $\sum_n |\gamma_{p,q}(n)|<\infty$. As in Proposition 2.5 of \cite{bai:taqqu:2013:multivariate}, we have that as $N\rightarrow\infty$,
\begin{equation}\label{eq:cross covariance limit}
\Cov\left(\frac{1}{\sqrt{N}}\sum_{n=1}^{[Nt_1]}X_p(n),\frac{1}{\sqrt{N}}\sum_{n=1}^{[Nt_2]} X_q(n)\right)\rightarrow (t_1\wedge t_2) \sum_{n=-\infty}^\infty \gamma_{p,q}(n).
\end{equation}
Note that $\gamma_{p,q}(n)=0$ always if the orders $k_p\neq k_q$.
\end{Rem}

We will now consider a general case where SRD and LRD and fLRD vectors can all be present in  $\mathbf{X}(n)$.
We  divide $\mathbf{X}(n)$ into four parts
$$\mathbf{X}(n)=(\mathbf{X}_{S_1}(n),\mathbf{X}_{S_2}(n),\mathbf{X}_L(n),\mathbf{X}_F(n))$$
of dimension $J_{S_1}, J_{S_2}, J_{L}, J_F$ respectively, which are defined as follows:

 \begin{enumerate}[(i)]
 \item
 all the components of $\mathbf{X}_{S_1}(n)=(X_{1,S_1}(n),\ldots,X_{J_{S_1},S_1}(n))$ have order $k=1$, namely, are all linear processes;
 \item
 every component of $\mathbf{X}_{S_2}(n)=(X_{1,S_2}(n),\ldots,X_{J_{S_2},S_2}(n))$ has order $k\ge 2$, and
the {\it combined vector}
 $$
\mathbf{X}_S(n)=(\mathbf{X}_{S_1}(n),\mathbf{X}_{S_2}(n))=(X_{1,S}(n),\ldots,X_{J_S,S}(n)),\quad J_S=J_{S_1}+J_{S_2},
$$
is SRD in the sense of Definition \ref{Def:SRD LRD Multi};
\item
the vector $\mathbf{X}_L(n)=(X_{1,L}(n),\ldots,X_{J_L,L}(n))$ is LRD  in the sense of Definition \ref{Def:SRD LRD Multi},  with correspondingly generalized Hermite kernels $\mathbf{g}=(g_{1,L},\ldots,g_{J_L,L})$;
\item
the vector $\mathbf{X}_F(n)=(X_{1,F}(n),\ldots,X_{J_F,F}(n))$ is
 fLRD in the sense of Definition \ref{Def:SRD LRD Multi}, with
 correspondingly generalized Hermite kernels $\mathbf{g}=(g_{1,F},\ldots,g_{J_F,F})$
 and fractional exponent $\mathbd{\beta}=(\beta_{1},\ldots,\beta_{J_F})$.
\end{enumerate}

We now state the multivariate limit theorem. We use $Y_N$ (with subscript $S_1$, $S_2$, $L$ or $F$) to denote the corresponding normalized sum $Y_N(t):=N^{-H}\sum_{n=1}^{[Nt]}X(n)$, where $X(n)$ is a component of $\mathbf{X}(n)$, $H$ is such that $\Var (Y_N(1))$ converges to some constant $c>0$ as $N\rightarrow\infty$.
\begin{Thm}\label{Thm:multi limit} Following the notation defined above, one has
\begin{align}\label{eq:multi conv}
(\mathbf{Y}_{N,S_1}(t),\mathbf{Y}_{N,S_2}(t),\mathbf{Y}_{N,L}(t),\mathbf{Y}_{N,F}(t))\ConvFDD(\mathbf{B}_1(t),\mathbf{B}_2(t),\mathbf{Z}(t),\mathbf{Z}^{\mathbd{\beta}}(t)),
\end{align}
where
\begin{enumerate}[(i)]
\item  $\mathbf{B}_1(t)=\mathbf{W}(t):=(\sigma_{1}W(t),\ldots,\sigma_{J_{S_1}}W(t))$ defined by the same standard Brownian motion $W(t)$, and
$$
\sigma_{p}=\sum_{n=-\infty}^\infty \sum_{i>0} a_{p,S_1}(n)a_{p,S_1}(n+i), \quad p=1,\ldots,J_{S_1}.
$$
\item $\mathbf{B}_2(t)$ is a multivariate Brownian motion with the covariance given by (\ref{eq:cross covariance limit});
\item $\mathbf{Z}(t)$ is a multivariate generalized Hermite process  defined as in (\ref{eq:Z(t)}) by the kernels $(g_{1,L},\ldots,g_{J_L,L})$ and using the $W(t)$ in Point $(i)$ as Brownian motion integrator.
\item $\mathbf{Z}^{\mathbd{\beta}}(t)$ is a multivariate fractionally-filtered generalized Hermite process  defined as in (\ref{eq:frac filt proc full}) by the kernels $(g_{1,F},\ldots,g_{J_F,F})$, fractional exponent $\mathbd{\beta}=(\beta_{1},\ldots,\beta_{J_F})$ and using the $W(t)$ in Point $(i)$ as  Brownian motion integrator.
\end{enumerate}
Moreover,  $\mathbf{B}_2(t)$  is always independent of $(\mathbf{B}_1(t),\mathbf{Z}(t),\mathbf{Z}^{\mathbd{\beta}}(t))$.

In addition, $\ConvFDD$ in (\ref{eq:multi conv}) can be replaced with weak convergence in $D[0,1]^J$, if every component of $\mathbf{X}_{S_1}$ and $\mathbf{X}_{S_2}$ satisfies the assumption in Theorem \ref{Thm:CLTWeak}, and every component of $\mathbf{X}_F$ satisfies the assumption given at the end of Theorem \ref{Thm:Frac NCLT beta<0}.
\end{Thm}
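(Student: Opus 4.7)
The plan is to split $\mathbf{X}(n)$ along its limit type: the ``Wiener-chaos block'' $(\mathbf{Y}_{N,S_1},\mathbf{Y}_{N,L},\mathbf{Y}_{N,F})$, whose limit lives on a common Wiener chaos driven by a single Brownian motion $W$, will be handled by the joint discrete-to-Wiener-chaos Proposition~\ref{Pro:Poly->Wiener}; the ``classical CLT block'' $\mathbf{Y}_{N,S_2}$, whose limit is Gaussian and is claimed to be independent of the rest, will be handled by the $m$-dependent truncation used in Theorem~\ref{Thm:CLT}; the two blocks are then coupled via the assertion that their mixed limits are independent.

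For the first block, each coordinate can be written as a discrete chaos $Q_{k_j}(h^{(j)}_{t,N})$ of appropriate order $k_j$, and the rescaled kernel $\tilde h^{(j)}_{t,N}(\mathbf{x})=N^{k_j/2}h^{(j)}_{t,N}([N\mathbf{x}]+\mathbf{1})$ converges in $L^2(\mathbb{R}^{k_j})$ to the kernel of the claimed limit: for $\mathbf{Y}_{N,S_1}$ with $k_j=1$ this limit kernel is $\sigma_p\mathbf{1}_{[0,t]}$, representing $\sigma_p W(t)$; for $\mathbf{Y}_{N,L}$ and $\mathbf{Y}_{N,F}$ the $L^2$ convergence is exactly what is established in the proofs of Theorems~\ref{Thm:NCLT}, \ref{Thm:frac non central beta>0}, and~\ref{Thm:Frac NCLT beta<0}. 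The multivariate form of Proposition~\ref{Pro:Poly->Wiener} then yields joint f.d.d.\ convergence of the whole block to $(\mathbf{B}_1,\mathbf{Z},\mathbf{Z}^{\mathbd{\beta}})$ defined through the same $W$.

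For the second block, introduce the $m$-truncation $X_{j,S_2}^{(m)}(n)=\sum'_{\mathbf{0}<\mathbf{i}\le m\mathbf{1}}a_j(\mathbf{i})\prod_p\epsilon_{n-i_p}$ coordinate-wise; since $\{\mathbf{X}^{(m)}_{S_2}(n)\}$ is $m$-dependent, the classical multivariate invariance principle of \citet{billingsley:1956:invariance} produces a Brownian limit $\mathbf{B}_2^{(m)}$ whose covariance is the $m$-truncation of (\ref{eq:cross covariance limit}), while the cross-SRD bound (\ref{eq:cross cov bound}) combined with the argument around (\ref{eq:m->inf}) gives uniform-in-$N$ $L^2(\Omega)$ approximation of $\mathbf{Y}_{N,S_2}$ by $\mathbf{Y}_{N,S_2}^{(m)}$, which justifies sending $m\to\infty$. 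To establish that $\mathbf{B}_2$ is independent of $(\mathbf{B}_1,\mathbf{Z},\mathbf{Z}^{\mathbd{\beta}})$, one combines Wiener-chaos orthogonality (between components of different orders the pre-limit covariances vanish identically) with the observation that same-order SRD-vs-LRD and SRD-vs-fLRD cross-covariances of partial sums, after rescaling by $N^{-1/2}N^{-H}$, are of order $N^{1/2-H}$ and hence vanish because $H>1/2$; the limiting mixed Gaussian / Wiener-chaos vector then factorizes into independent pieces following the characteristic-function / moment scheme of \citet{bai:taqqu:2013:multivariate}.

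Tightness under the stated assumptions is handled coordinate-wise: Proposition~\ref{Pro:Hypercontract} together with the moment estimates of Theorems~\ref{Thm:CLTWeak} and~\ref{Thm:Frac NCLT beta<0} yield bounds of the form $\E|Y_N(t)-Y_N(s)|^p\le C|F_N(t)-F_N(s)|^{pH}$ with $pH>1$, so each coordinate is tight in $D[0,1]$ and the vector in $D[0,1]^J$; for LRD and $\beta>0$ fLRD coordinates, tightness is automatic since $H>1/2$. The main obstacle is making the independence of $\mathbf{B}_2$ rigorous: all four limits live on the same underlying Gaussian probability space, and orthogonality of Wiener chaoses only gives vanishing covariances, not full independence for non-Gaussian joint laws. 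The independence must therefore be extracted through the characteristic-function / joint-moment machinery, and the vanishing cross-covariance estimate above is the key input that makes this machinery work in the present setting, exactly as in \citet{bai:taqqu:2013:multivariate}.
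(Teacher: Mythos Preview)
Your proposal is correct and follows essentially the same approach as the paper, which itself only refers to Theorem~3.5 of \citet{bai:taqqu:2013:multivariate} and supplies heuristics; you have in fact fleshed out those heuristics accurately, including the key point that $\mathbf{Y}_{N,S_1}$ is treated via Proposition~\ref{Pro:Poly->Wiener} (the SRD condition for $k=1$ forces $\sum_i|a(i)|<\infty$, so the rescaled kernel indeed converges in $L^2$ to a multiple of $\mathbf{1}_{[0,t]}$) and that the asymptotic independence of $\mathbf{B}_2$ from the rest is obtained through vanishing cross-covariances combined with the characteristic-function scheme of \citet{bai:taqqu:2013:multivariate}.
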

The proof is similar to that of Theorem 3.5 of \citet{bai:taqqu:2013:multivariate}.  We only provide some heuristics. The processes $\mathbf{B}_2(t),\mathbf{Z}(t)$ and $\mathbf{Z}^{\mathbd{\beta}}(t)$ involve the same integrator $W(\cdot)$ because they are defined in terms of the same $\epsilon_i$'s. To understand the independence statement, note that the independence between $\mathbf{B}_2$ and $W$  stems from the uncorrelatedness between $X_{S_2}$ and $X_{S_1}$, since $X_{S_2}$ belongs to a discrete chaos of order $k\ge 2$, while $X_{S_1}$ belongs to a discrete chaos of order $k=1$. $\mathbf{B}
_2$ is therefore independent of $\mathbf{B}_1$. $\mathbf{B}_2$ is also independent of $\mathbf{Z}$ and $\mathbf{Z}^{\mathbd{\beta}}$, because $\mathbf{Z}$ and $\mathbf{Z}^{\mathbd{\beta}}$  have $W$ as integrators.

\begin{Rem}\label{Rem:indep}
The pairwise dependence between components of $\mathbf{Z}$, of $\mathbf{Z}^{\mathbd{\beta}}$, and between cross components in Theorem \ref{Thm:multi limit} can be  checked using the criterion due to \citet{ustunel:zakai:1989:independence}, that is, if $f\in L^2(\mathbb{R}^p)$ and $g\in L^2(\mathbb{R}^q)$, and both are symmetric, then the multiple Wiener-It\^o integrals $I_p(f)$ and $I_q(g)$ are independent, if and only if
\[
f \otimes_1 g(x_1,\ldots,x_{p+q-2}):=\int_{\mathbb{R}}f(x_1,\ldots,x_{p-1},y)g(x_{p},\ldots,x_{p+q-2},y) dy =0 ~~a.e..
\]
For example, suppose that  two generalized Hermite kernels $g_1$ and $g_2$ on $\mathbb{R}_+^{p}$ and $\mathbb{R}_+^{q}$ are symmetric, then the corresponding two generalized Hermite processes are independent if and only if
\begin{equation}\label{eq:indep crit}
\int_\mathbb{R}~ \int_{0}^t g_1(s-x_1,\ldots,s-x_{p-1},s-y)ds\int_0^t g_2(s-x_{p},\ldots,s-x_{p+q-2},s-y)ds~dy = 0 \qquad a.e. ~,
\end{equation}
where we use the  abbreviation $ g_j(\mathbf{x})=g_j(\mathbf{x})\mathrm{1}_{\{\mathbf{x}>\mathbf{0}\}}$, $j=1,2$.  Obviously, if $g_1$ and $g_2$ are both positive, then the dependence always holds. This is true, for example, for the symmetrized version of the kernels in (\ref{eq:nonsym Herm}).
\end{Rem}

\noindent\textbf{Acknowledgments.} This work was partially supported by the NSF grant DMS-1007616 and DMS-1309009 at Boston University.

\bibliographystyle{plainnat}

\bigskip

\noindent Shuyang Bai~~~~~~~ \textit{bsy9142@bu.edu}\\
Murad S. Taqqu ~~\textit{murad@bu.edu}\\
Department of Mathematics and Statistics\\
111 Cumminton Street\\
Boston, MA, 02215, US

\end{document}